\documentclass[12pt]{article}
\usepackage{graphicx,amsthm}
\usepackage[margin=1in]{geometry}
\usepackage[numbers]{natbib}

\usepackage{amsmath,amssymb}
\usepackage{tikz,bookmark,epsfig,rotating,colortbl}
\usepackage{multirow, anyfontsize}

\usepackage{amsthm}%
\usepackage{mathrsfs}%
\usepackage[title]{appendix}%
\usepackage{xcolor}%
\usepackage{textcomp}%
\usepackage{manyfoot}%
\usepackage{booktabs}%

\usepackage{listings,bm}
\usepackage{comment}
\usepackage{float}
\usepackage{hhline}
\usepackage{authblk}
\newtheorem{lemma}{Lemma}
\newtheorem{proposition}{Proposition}
\newtheorem{theorem}{Theorem}
\newtheorem{corollary}{Corollary}
\title{\textbf{On the Cores of Uniform and Almost-Uniform $3$-Qualitative Independence Hypergraphs}}
\author[1]{Raina Mary Thomas}
\author[2]{Yasmeen Akhtar}

\affil[1,2]{ Department of Mathematics, BITS Pilani K K Birla Goa Campus, Goa 403726, India}

\affil[1]{\texttt{p20220019@goa.bits-pilani.ac.in}}
\affil[2]{\texttt{yasmeena@goa.bits-pilani.ac.in}}

\date{}
\begin{document}

\maketitle

\definecolor{burgundy}{rgb}{0.5, 0.0, 0.13}
\definecolor{amethyst}{rgb}{0.6, 0.4, 0.8}
\maketitle

\begin{abstract}
  Qualitative independence hypergraphs provide a useful combinatorial framework for analyzing the existence and structure of covering arrays. In this work, we study the \emph{uniform} and \emph{almost-uniform $3$-qualitative independence hypergraphs} $3\text{-}UQI(n,2)$ and $3\text{-}AUQI(n,2)$, and establish a structural correspondence between these families and merged Johnson graphs, with emphasis on their cores. Focusing on the smallest unresolved instance, $3\text{-}QI(8,2)$, we classify all of its strongly independent sets and determine its strong independence number. Using this, along with its strong chromatic number and the size of the largest $3$-clique, we show that $3\text{-}QI(8,2)$ is a core. For $n>8$, we further identify sufficient conditions under which $3\text{-}UQI(n,2)$ and $3\text{-}AUQI(n,2)$ are cores.
\end{abstract}
\noindent\textbf{Keywords:} Hypergraph, Qualitative Independence, Merged Johnson Graph, Core, Strong Chromatic Number, Strong Independence Number\\
\noindent\textbf{MSC codes:} 05C65, 05C60, 05C35, 68R05, 05D05
\section{Introduction}\label{sec1}
 Let $S$ be a set of $g$ symbols, and $n$, $t$ be positive integers such that $t \geq 2$, $n \geq g^t$. A set of strings of length $n$ and entries from $S$ is said to be \emph{$t$-qualitatively independent} ($t$-QI) if for any $t$-subset, $\{v_{1}, v_{2}, \ldots , v_{t}\}$, of strings and any ordered $t$-tuple say $(g_1, g_2, \ldots, g_t)\in S^t$, there exists an index $j$ such that $v_{i}[j] = g_i$ for $i=1, 2, \ldots, t$. This interesting property, often studied in \emph{extremal set theory}, is what forms the basis of a special class of hypergraphs known as \emph{qualitative independence (QI) hypergraphs}. A \emph{hypergraph $H$} is a pair $H = (V, E)$, where $V = \{v_1, v_2, . . . , v_k\}$ is a set of vertices, and $E = \{e_1, e_2, . . . , e_m\}$ is a set of non-empty subsets of $V$, called \emph{hyperedges}, such that $\displaystyle \bigcup_{i=1} ^ {m} e_i = V$. A \emph{$t$-uniform} hypergraph is one in which all the hyperedges have the same cardinality $t$. A \emph{$t$-qualitative independence hypergraph}, $t\text{-}QI(n, g) = (V, E)$, is a $t$-uniform hypergraph, where $V$ is the set of all $g$-ary strings of length $n$ with entries from $S$ and with the property that every symbol appears at least $g^{t-1}$ times and the first appearance of every symbol is in the lexicographic order; a set of $t$ vertices forms a hyperedge if they are $t$-qualitatively independent. Equivalently, the vertices may be represented as partitions of an $n$-set into $g$ classes, such that the indices where a particular symbol occurs in the string belong to the same class in the partition. A variable strength version of these hypergraphs, where the hyperedges can be of any size and are closed under subsets, was introduced by Raaphorst et al. \cite{raaphorst2013thesisvariable, raaphorst2018variable} to characterize the existence of variable strength covering arrays. \\

 A \emph{covering array}, $t\text{-}CA(n, k, g)$, of strength $t$, $g$ symbols, $k$ factors, and size $n$, is an $n \times k$ array, where the entries are from a set $S$ of cardinality $g$ (say $\mathbb{Z}_g$), with the property that any set of $t$ column vectors is $t$-qualitatively independent. Therefore, covering arrays (CAs) are also known as \emph{qualitatively independent families}. They are used in combinatorial testing as a cost-effective and efficient tool to design test suites \cite{alanwilliams1996practical, survey2019methods, applstevens1998efficient, appl1994compressing}. Each column in a CA represents a factor that is tested, each entry is an input taken by the corresponding factor, and each row represents a test. An \emph{optimal} covering array is one with the least possible size. This parameter in practice, corresponds to the least number of tests needed. The size of an optimal covering array, $t\text{-}CAN(k, g)$, is known as the \emph{covering array number}. A \emph{covering array on a hypergraph} $H = (V, E)$, of size $n$ and $g$ symbols, denoted $CA(n, H, g)$, is an $n \times |V|$ array with entries from $S$ and columns indexed by the vertices in $V$ such that: for any hyperedge $e \in E$ with $|e| = m$, the columns in the sub-array corresponding to the vertices in $e$ are $m$-qualitatively independent. The \emph{covering array number of a hypergraph $H$} with $g$ symbols, is the size of an optimal CA on $H$ and is denoted as $CAN(H,g)$. An example of an optimal CA on a hypergraph $H=(V, E)$, where $V=\{a, b, c, d, e\}$ and $E=\{\{a,b,c\}, \{b, c, d\}, \{c,d,e\}, \{b,d,e\}\}$ is given in Figure \ref{fig:eg}.
\begin{center}
\begin{figure}
    \centering
    \begin{tikzpicture}[scale=4]
    \centering	
	\draw[rotate around={30:(0, 0.4)}, black, fill=burgundy!40, fill opacity=0.4] (0, 0.4) ellipse (0.15 cm and 0.55 cm);
	\draw[rotate around={150:(0.4, 0.4)}, black, fill=cyan!40, fill opacity=0.4] (0.4, 0.4) ellipse (0.15 cm and 0.55 cm);
	\draw [black, rounded corners=13mm, fill=green!40, fill opacity=0.4] (-0.3,0.5)--(0.7,0.5)--(0.2,-0.2)--cycle;
	  \draw [black, rounded corners=13mm, fill=black!40, fill opacity=0.4] (-0.35,0.3)--(0.65,0.3)--(0.75,1)--cycle;
        \node[draw, black, circle,thick,minimum size=0.1cm,inner sep=0pt, fill=black] at (0.2, 0.05) {};
        \node[draw, black, circle,thick,minimum size=0.1cm,inner sep=0pt, fill=black] at (0.4, 0.4) {};
        \node[draw, black, circle,thick,minimum size=0.1cm,inner sep=0pt, fill=black] at (0.6, 0.7) {};
        \node[draw, black, circle,thick,minimum size=0.1cm,inner sep=0pt, fill=black] at (0, 0.4) {};
        \node[draw, black, circle,thick,minimum size=0.1cm,inner sep=0pt, fill=black] at (-0.2, 0.7) {};
        \node at (-0.1, 0.4) {\large $b$};
        \node at (0.2, 0.12) {\large $c$};
        \node at (0.46, 0.4) {\large $d$};
        \node at (-0.25, 0.75) {\large $a$};
        \node at (0.65, 0.75) {\large $e$};
	\node at (0.2, -0.2) {$\boldsymbol{H}$};		
        \node at (2.3,0.5){	\renewcommand{\arraystretch}{2}
					\scriptsize \begin{tabular}{>{\columncolor[gray]{0.90}}c>{\columncolor[gray]{0.90}}c>{\columncolor[gray]{0.90}}c>{\columncolor[gray]{0.90}}c>{\columncolor[gray]{0.90}}c}
					 \rowcolor{amethyst!30!}   \textbf{$a$}& \textbf{$b$} &  \textbf{$c$} & \textbf{$d$} & \textbf{$e$}\\		
							0 & 0 & 0 & 0 & 0\\
							0 & 0 & 1 & 0 & 1\\
							0 & 1 & 0 & 0 & 1\\
							0 & 1 & 1 & 0 & 0\\
							1 & 0 & 0 & 1 & 1\\
							1 & 0 & 1 & 1 & 0\\
							1 & 1 & 0 & 1 & 0\\
							1 & 1 & 1 & 1 & 1\\
							
					\end{tabular}};
        \node at (2.3,-0.3) {$CA(8, H, 2)$};           
    \end{tikzpicture}	            
    \caption{A hypergraph $H$ and an optimal CA on $H$}
    \label{fig:eg}
\end{figure}
\end{center}
Raaphorst et al. \cite{raaphorst2018variable} showed that for a $t$-uniform hypergraph $H$ and positive integers $g$ and $n$, a covering array $CA(n, H, g)$ exists if and only if there is a hypergraph homomorphism from $H$ to $t\text{-}QI(n, g)$. This 
motivated us to do a detailed study on the structural properties of QI hypergraphs. \\

A \emph{subhypergraph} $H'=(V', E')$ of a hypergraph $H=(V,E)$ is a hypergraph such that $V'\subseteq V$, $E' \subseteq E \cap \mathcal{P}(V')$, where $\mathcal{P}(V')$ is the power set of $V'$. If $E' = E \cap \mathcal{P}(V')$, then $H'$ is said to be an \emph{induced} subhypergraph. A \emph{$t$-complete hypergraph of order $n$},  denoted $K_n^t$, is a $t$-uniform hypergraph $H=(V,E)$ such that $|V| = n$ and  $E$ consists of all the $t$-subsets of $V$. Given a hypergraph $H$, a \emph{$t$-clique} of size $m$ in $H$ is a subset $C \subseteq V$ of size $m$ such that the induced subhypergraph on $C$ is $K_m^t$. We write $\omega_t(H)$ to denote the size of the largest $t$-clique in $H$. For example, $\omega_3(H)=3$ for $H$ in Figure \ref{fig:eg}. For two hypergraphs $H=(V, E)$ and $H_1=(V_1,E_1)$, a \emph{hypergraph homomorphism} from $H$ to $H_1$, denoted $f : H \rightarrow H_1$, is a map $f : V \rightarrow V_1$ such that for every hyperedge $e = \{v_1,\ldots, v_m\} \in E$, there exists a hyperedge $e'\in E_1$ such that  the image $f(e) = \{f(v_1), \ldots,f(v_m)\} \subseteq e'$ and $|e|$ = $|f(e)|$. If $H=H_1$, then $f$ is known as an \emph{endomorphism} and additionally, if $f$ is bijective, it is known as an \emph{automorphism}.\\
  
A hypergraph $H$ is called a \emph{core} if every endomorphism of $H$ is an automorphism. For an arbitrary hypergraph $H$, a \emph{core of $H$}, denoted $H^\bullet$, is a subhypergraph of $H$ such that $H^\bullet$ is a core and there exists a homomorphism $H \rightarrow H^\bullet$. Every finite hypergraph has a unique core up to isomorphism \cite{micciancio2004using}. The core of a hypergraph is an interesting structural characteristic to study, as it is the minimal subhypergraph of a hypergraph that is homomorphically equivalent to itself. It preserves the size of the maximum $t$-cliques of a hypergraph as well as its strong chromatic number. The decision problem of CORE IDENTIFICATION, that is, ``given graphs $G_1$ and $G_2$, is $G_2$ the core of $G_1$?" is DP-complete \cite{fagin2003data} and the decision problem of CORE RECOGNITION, that is, ``given a graph $G$, is $G$ a core?" is co-NP-complete \cite{hell1992core}. Cores of graphs have been extensively studied from the 1960s under various names such as minimal graphs, reduced graphs, automorphic graphs, retract-rigid graphs, etc., in diverse contexts \cite{fellner1982minimal, hell1992core, bauslaugh1995core, nevsetvril1978classescore, royle2001algebraic}. Some well-known examples of cores include complete graphs $K_n$, odd cycles, odd wheels, Kneser graphs (including the Petersen graph), $k$-critical graphs, and the icosahedron, etc., whereas even cycles and complete graphs with one edge removed are not cores. Johnson graphs are either cores or have a complete core \cite{godsil2011cores}. For all positive integers $k$, $k$-uniform complete hypergraphs are cores. Figure \ref{corefig} shows $H^\bullet$, the core of $H$ in Figure \ref{fig:eg} such that there is a homomorphism $f:H \rightarrow H^\bullet$ defined by $f(a)=f(e)=e, f(b)=b, f(c)=c, f(d)=d$.\\

\begin{figure}
\centering
    \begin{tikzpicture}[scale=4]
         \draw[rotate around={150:(0.4, 0.4)}, black, fill=cyan!40, fill opacity=0.4] (0.4, 0.4) ellipse (0.15 cm and 0.55 cm);
         \draw [black, rounded corners=13mm, fill=green!40, fill opacity=0.4] (-0.3,0.5)--(0.7,0.5)--(0.2,-0.2)--cycle;
	\draw [black, rounded corners=13mm, fill=black!40, fill opacity=0.4] (-0.35,0.3)--(0.65,0.3)--(0.75,1)--cycle;
         \node[draw, black, circle,thick,minimum size=0.1cm,inner sep=0pt, fill=black] at (0.2, 0.05) {};
         \node[draw, black, circle,thick,minimum size=0.1cm,inner sep=0pt, fill=black] at (0.4, 0.4) {};
         \node[draw, black, circle,thick,minimum size=0.1cm,inner sep=0pt, fill=black] at (0.6, 0.7) {};
         \node[draw, black, circle,thick,minimum size=0.1cm,inner sep=0pt, fill=black] at (0, 0.4) {};
         \node at (-0.1, 0.4) {\large $b$};
         \node at (0.2, 0.12) {\large $c$};
         \node at (0.46, 0.4) {\large $d$};
         \node at (0.65, 0.75) {\large $e$};
         \node[draw, black, circle,thick,minimum size=0.1cm,inner sep=0pt, fill=black] at (2.2, 0.05) {};
         \node[draw, black, circle,thick,minimum size=0.1cm,inner sep=0pt, fill=black] at (2.55, 0.4) {};
         \node[draw, black, circle,thick,minimum size=0.1cm,inner sep=0pt, fill=black] at (2.6, 0.7) {};
         \node[draw, black, circle,thick,minimum size=0.1cm,inner sep=0pt, fill=black] at (2, 0.4) {};
         \node at (1.9, 0.4) {\large $b$};
         \node at (2.2, 0) {\large $c$};
         \node at (2.65, 0.4) {\large $d$};
         \node at (2.65, 0.75) {\large $e$};
         \draw (2.2,0.05)--(2.55,0.4)--(2.6,0.7)--(2,0.4)--(2.2,0.05);
         \draw (2.55,0.4)--(2,0.4);
         \draw (2.2,0.05)--(2.6,0.7);
         \node at (0.2, -0.25) {\boldsymbol{$H^\bullet$}};
         \node at (2.2, -0.25) {\boldsymbol{$[H^\bullet]_2 = K_4$}};
    \end{tikzpicture}
    \caption{The core $H^\bullet$ of the hypergraph $H$ from Figure \ref{fig:eg} and its $2$-section $[H^\bullet]_2$}
    \label{corefig}
\end{figure}

 Meagher et al. \cite{meagher2005covering} showed that the core of $2\text{-}QI(n, 2)$ is the complete graph on $\binom{n}{n/2}/2$ vertices, $K_{\binom{n}{n/2}/2}$ for even $n$, and it is the induced subgraph, where the vertices are those in which each symbol appears $\lfloor \frac{n}{2} \rfloor $ or $\lceil \frac{n}{2} \rceil $ many times for odd $n$. Godsil et al. \cite{godsilnewman33} showed that $2\text{-}QI(9,3)$ is a core. To the best of our knowledge, no study has been done on the cores of $t\text{-}QI(n, 2)$ for $t>2$ or of $2\text{-}QI(n, g)$ for $g\geq 3$ except when $n=9$. In this paper, we study the class of hypergraphs $3\text{-}QI(n,2)$ and trace their links to merged Johnson graphs, in order to study their cores. In particular, we show that $3\text{-}QI(8,2)$ is a core.\\
 
The rest of the paper is organised as follows. Section \ref{sec:2} sets the foundation for our results. In Section~\ref{sec:3}, we establish a structural connection between qualitative independence hypergraphs and merged Johnson graphs, with particular emphasis on their cores. Section~\ref{sec:4} focuses on the structural properties of $3\text{-}QI(8,2)$: we characterize its strongly independent sets and determine its strong independence number. In Section~\ref{sec:5}, we use these results and two invariants preserved under core reduction, namely its strong chromatic number and the size of its largest $3$-clique, to prove that $3\text{-}QI(8,2)$ is a core. Some concluding remarks and directions for future work are presented in Section~\ref{sec:6}.
\section{Preliminary Results}
\label{sec:2}

We first summarize the key properties of hypergraph cores, which form the basis for the results developed in the subsequent sections. 

\begin{lemma}
\label{induced}
If a hypergraph $H$ and its core $H^\bullet$ have the same order, then $H^\bullet = H$. More generally, the core of a hypergraph is an induced subhypergraph. 
\end{lemma}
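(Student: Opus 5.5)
We argue directly from the definitions: a core $H^\bullet$ is a subhypergraph of $H$ that is itself a core, and there is a homomorphism $f: H \rightarrow H^\bullet$. The key observation is that the inclusion $\iota: H^\bullet \rightarrow H$ is a homomorphism. Indeed, every hyperedge $e'$ of $H^\bullet$ is a hyperedge of $H$, and $\iota$ is injective, so $|\iota(e')| = |e'|$. Hence $g = f\circ\iota$ is an endomorphism of $H^\bullet$. This needs the easy check that compositions of hypergraph homomorphisms are homomorphisms: if $f(e)\subseteq e''$ with $|f(e)|=|e|$, then sizes are preserved along the chain and containment in a hyperedge persists. Since $H^\bullet$ is a core, $g$ is an automorphism of $H^\bullet$, so $g$ is a bijection of $V(H^\bullet)$. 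Because $V(H^\bullet)$ is finite, $g^{-1} = g^{k}$ for some $k\ge 1$, so $g^{-1}$ is itself an endomorphism, being a power of $g$.

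\textbf{Induced.} Define $r = g^{-1}\circ f : H \rightarrow H^\bullet$. This is a homomorphism that restricts to the identity on $V(H^\bullet)$, i.e.\ a retraction. Now let $e \in E \cap \mathcal{P}(V(H^\bullet))$ be a hyperedge of $H$ lying inside $V(H^\bullet)$. Then $r(e) = e$, and by the homomorphism property $e \subseteq e''$ for some hyperedge $e''$ of $H^\bullet$ with $|e| = |r(e)|$.

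The main obstacle is concluding that $e$ itself, and not merely a superset of it, is a hyperedge of $H^\bullet$. Under the paper's definition of homomorphism, images need only be \emph{contained} in a hyperedge. For the uniform hypergraphs studied here this is immediate, since $|e|=|e''|=t$ forces $e=e''$. In the non-uniform case I would handle the obstacle by choosing the core with the maximal edge set among subhypergraphs on $V(H^\bullet)$. Adding $e$ to $H^\bullet$ keeps $r$ a homomorphism to the enlarged hypergraph, and the enlarged hypergraph is still a core: each of its endomorphisms is also an endomorphism of $H^\bullet$, because any hyperedge image $\phi(e)$ lies in some hyperedge and hence in $e''$. So the unique-up-to-isomorphism core can be taken to be induced.

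\textbf{Same order.} If $|V(H^\bullet)| = |V(H)|$, then $V(H^\bullet)=V(H)$ since $H^\bullet$ is a subhypergraph. By the induced statement, $E(H^\bullet) = E\cap\mathcal{P}(V) = E$, so $H^\bullet = H$.
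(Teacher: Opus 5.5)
Your proof is correct, and it takes a different route from the paper's.

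\textbf{The paper's route.} The paper argues by contradiction. If the induced subhypergraph $H^*$ on $V(H^\bullet)$ had more hyperedges than $H^\bullet$, then by pigeonhole two distinct hyperedges of $H^*$ would be sent by $f$ into the same hyperedge of $H^\bullet$. Hence $f$ would identify two vertices of $V(H^\bullet)$. Then the chain $H \xrightarrow{f} H^\bullet \xrightarrow{i} H^* \xrightarrow{f} H^\bullet\setminus\{v\}$ would map $H$ into a proper subhypergraph of $H^\bullet$, contradicting that $H^\bullet$ is a core.

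\textbf{Your route.} You use core-ness first, to make $f|_{V(H^\bullet)}$ an automorphism. You invert it as a power of itself, which gives a retraction $r$ fixing $V(H^\bullet)$ pointwise. You then read off directly that every hyperedge of $H$ inside $V(H^\bullet)$ is contained in, hence equal to, a hyperedge of $H^\bullet$.

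\textbf{Comparison.} Your argument is direct rather than by contradiction. It also makes explicit exactly where uniformity enters. The paper's pigeonhole step (``two hyperedges with the same image hyperedge force two vertices to collide'') uses the same fact silently. Only in a uniform hypergraph must the image of a hyperedge equal a hyperedge rather than merely lie inside one.

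\textbf{The non-uniform case.} Your caution there is well placed. With the containment definition of homomorphism, the statement as written actually fails. Take $V=\{x,y,z\}$ and $E=\{\{x,y,z\},\{x,y\}\}$. The subhypergraph with the single hyperedge $\{x,y,z\}$ is a core: every endomorphism must be injective on $\{x,y,z\}$. It receives the identity homomorphism from $H$ and has the same order as $H$. Yet it is neither equal to $H$ nor induced. So your maximal-edge-set repair, which shows that \emph{some} core of $H$ is induced, is the right general conclusion. The literal lemma holds exactly in the uniform setting, which is the only one the paper uses.
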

\begin{proof} 
Since $H^\bullet$ is the core of $H$, there exists a homomorphism $f: H \rightarrow H^\bullet$. Suppose $H^\bullet=(V^\bullet, E^\bullet)$ is not induced and let $x_1, x_2, \ldots, x_n$ be vertices in $H^\bullet$ which do not form a hyperedge in $H^\bullet$ but form a hyperedge in $H$. Consider the induced subhypergraph $H^*=(V^*, E^*)$ of $H$ such that $V^*=V^\bullet$. Each hyperedge in $H^*$ is mapped to a hyperedge in $H^\bullet$ under $f$, and therefore, there are two distinct hyperedges of $H^*$ that map to the same hyperedge in $H^\bullet$. So, at least two distinct vertices are mapped to the same image vertex. This implies the existence of a subhypergraph of order less than that of $H^\bullet$, which is homomorphically equivalent to $H^*$, that is, there exists $g:H^*\rightarrow H^\bullet\backslash\{v\}$, for some $v \in H^\bullet$. This leads to the existence of homomorphisms $H \xrightarrow{f} H^\bullet \xrightarrow{i} H^* \xrightarrow{g}H^\bullet\backslash\{v\}$, where $i$ is the inclusion map. So we have a smaller subhypergraph $H^\bullet \backslash \{v\}$ of $H$ such that there is a homomorphism $g \circ i \circ f: H \rightarrow H^\bullet\backslash\{v\}$. This is a contradiction to $H^\bullet$ being the core of $H$. So, $H^\bullet$ has to be an induced subhypergraph. Now, if the number of vertices is the same in both $H$ and $H^\bullet$, this implies that $H=H^\bullet$.
\end{proof}

A hypergraph is \emph{vertex-transitive} if its automorphism group acts transitively on its vertices, that is, for any pair of vertices, there is an automorphism taking one vertex to the other. Godsil et al. \cite{royle2001algebraic} showed that the order of a vertex-transitive graph is divisible by the order of its core. We now establish the same property for any vertex-transitive hypergraph.

\begin{lemma}
\label{core}
If a hypergraph $H=(V, E)$ is vertex-transitive, then the order of its core $H^\bullet=(V^\bullet, E^\bullet)$ divides the order of $H$.
\end{lemma}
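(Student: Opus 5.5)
We adapt the standard argument for graphs (Godsil and Royle), showing that $V$ can be partitioned into fibres of a retraction all of the same size, or more robustly, that $V$ is covered evenly by copies of $V^\bullet$.

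\textbf{Setup.} Let $f : H \rightarrow H^\bullet$ be a homomorphism, and let $\iota: H^\bullet \rightarrow H$ be the inclusion. Then $f\circ\iota$ is an endomorphism of the core $H^\bullet$, hence an automorphism. Replacing $f$ by $(f\circ\iota)^{-1}\circ f$, we may assume $f$ is a retraction, i.e.\ $f$ restricted to $V^\bullet$ is the identity.

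\textbf{Key claim.} For every automorphism $\sigma$ of $H$, the image $\sigma(V^\bullet)$ meets each fibre $f^{-1}(x)$, $x\in V^\bullet$, in exactly one vertex. Indeed, $f\circ\sigma\circ\iota$ is a homomorphism $H^\bullet\rightarrow H^\bullet$, since the composition of homomorphisms is a homomorphism: the defining condition $f(e)\subseteq e'$ with $|f(e)|=|e|$ composes correctly. Hence it is an automorphism, in particular injective. Thus $f$ is injective on $\sigma(V^\bullet)$, and since $|\sigma(V^\bullet)|=|V^\bullet|$ equals the number of fibres, the claim follows.

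\textbf{Counting.} Let $G=\mathrm{Aut}(H)$, which is transitive on $V$. Double count the pairs $(\sigma, v)$ with $\sigma\in G$ and $v\in \sigma(V^\bullet)\cap F$, where $F=f^{-1}(x)$ is a fixed fibre. By the claim, each $\sigma$ contributes exactly one pair, giving $|G|$ pairs. On the other hand, by transitivity, for each vertex $v$ the number of $\sigma$ with $v\in\sigma(V^\bullet)$ equals $\sum_{u\in V^\bullet}|\{\sigma:\sigma(u)=v\}| = |V^\bullet|\cdot|G|/|V|$, using that each stabiliser coset has size $|G|/|V|$. Hence $|G| = |F|\cdot|V^\bullet|\,|G|/|V|$, so $|F| = |V|/|V^\bullet|$. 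Since $|F|$ is an integer, $|V^\bullet|$ divides $|V|$.

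\textbf{Main obstacle.} The main point requiring care is checking that hypergraph homomorphisms, as defined in the paper (images contained in some hyperedge, with $|f(e)|=|e|$), compose and restrict correctly. In particular, we must verify that $f\circ\sigma\circ\iota$ is genuinely an endomorphism of $H^\bullet$ and not merely a map into $H$. This holds because $f$ lands in $H^\bullet$ and $\sigma$ preserves hyperedges. Once that is confirmed, the core property gives injectivity and the counting is routine.
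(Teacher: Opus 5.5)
Your proposal is correct and follows essentially the same route as the paper: both show that every automorphic image $\sigma(V^\bullet)$ is a transversal of the fibres of $f$, then double count automorphisms against the vertices of a fibre using vertex-transitivity. Your two refinements make rigorous what the paper argues informally: applying the core property directly to $f\circ\sigma\circ\iota$ gives the transversal claim, and the explicit orbit--stabiliser value $N=|V^\bullet|\,|\mathrm{Aut}(H)|/|V|$ shows that $N$ does not depend on $v$.
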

\begin{proof} 
If the fibres (pre-images) of vertices under a homomorphism $H \rightarrow H^\bullet$ have the same size, then the size of a fibre times $|V^\bullet|$ is $|V|$ and so, $|V^\bullet|$ divides $|V|$. The rest of the proof establishes that the fibres are of the same size.\\

Let $f:H \rightarrow H^\bullet$ be a homomorphism. Since $H^\bullet$ is the core of $H$, $f$ is surjective. Let $g \in Aut(H)$ and, therefore,  $|g(V^\bullet)|=|V^\bullet|$. Hence, $f(g(V^\bullet))=V^\bullet$ (if $X=f(g(V^\bullet))$ is a proper subhypergraph of $H^\bullet$, then it would be a smaller subhypergraph such that there exists the inclusion map $i:X \rightarrow H$ and a homomorphism $f\circ g \circ f: H \rightarrow X$, which contradicts the fact that $H^\bullet$ is the core). Hence, $g(V^\bullet)$ intersects each fibre of $f$ at exactly one vertex, that is, $f|_{g(V^\bullet)}$ is a bijection. \\

Suppose $v\in V$ and $F$ is the fibre of $f$ containing $v$. 
We claim that the number of automorphisms $g \in Aut(H)$ such that $g(V^\bullet)$ contains $v$, is independent of our choice of $v$. Let $g$ be an automorphism of $H$ such that $g(V^\bullet)$ contains $v$. So, there exists $v' \in H^\bullet$ such that $g(v')=v$. Let $w \neq v$ be a vertex in $H$. By vertex transitivity, there exists $g' \in Aut(H)$ such that $g'(v')=w$, that is, $g'$ is an automorphism of $H$ containing $w$ in its range. Thus, for any automorphism $g$ of $H$ such that the image of $H^\bullet$ under it contains vertex $v$, there exists an automorphism $g'$ of $H$ such that the image of $H^\bullet$ under it contains vertex $w$ and vice versa. That is, for all $v, w \in V$, there exists a one-to-one correspondence between the set of automorphisms of $H$ whose image of $H^\bullet$ contains $v$ and the set of automorphisms of $H$ whose image of $H^\bullet$ contains $w$. Let this number of automorphisms be $N$, that is, $N = |\{g:g \in Aut(H), v \in g(V^\bullet)\}|$, for any $v\in V$. Every image in $g(V^\bullet)$ meets $F$ at exactly one vertex since $f|_{g(V^\bullet)}$ is a bijection. \\

Now we compute the number of automorphisms in  $Aut(H)$. For a fixed vertex $v \in F$, $N$ is the number of automorphisms $g$ such that $v \in g(V^\bullet)$. Under such an automorphism, exactly one vertex from every other fibre also lies in $g(V^\bullet)$. The automorphisms not counted in $N$ are precisely those $g'$ for which some vertex of $F$ other than $v$ lies in $g'(V^\bullet)$. To account for these, we add $N$ for each vertex in $F$. Now, let $x$ be a vertex belonging to any fibre distinct from $F$, and let $h$ be an automorphism with $x \in h(V^\bullet)$. Then some vertex of $F$ must also lie in $h(V^\bullet)$, so this automorphism, $h$, has already been counted in the $|F|\cdot N$ automorphisms counted above. Therefore, $|Aut(H)| = |F| \cdot N$, and every fibre has the same size, namely $\frac{|Aut(H)|}{N}$.
\end{proof}

A \emph{uniform} $t$-qualitative independence hypergraph, $t\text{-}UQI(n, g)$, is an induced subhypergraph of $t\text{-}QI(n,g)$ whose vertices have the property that every symbol appears equally many times. Uniform partitions, corresponding to these vertices, reduce the search space required and are preferred for larger statistical power. Cliques in $t\text{-}UQI(n, g)$ correspond to \emph{balanced} CAs, that is, a CA with the property that each symbol occurs the same number of times in each column. Raaphorst et al. \cite{raaphorst2018variable} proved that a balanced CA on a hypergraph $H$ with size $n$ and $g$ symbols exists if and only if there exists a hypergraph homomorphism from $H$ to $t\text{-}UQI(n, g)$. An \emph{almost-uniform} $t$-qualitative independence hypergraph, $t\text{-}AUQI(n, g)$, is an induced subhypergraph of $t\text{-}QI(n,g)$ whose vertices have the property that if $n = gs+r$ for some integers $s$ and $0 < r < g$, then $(g-r)$ of the symbols appear $s$ times and the remaining $r$ appears $(s+1)$ times. Meagher et al. \cite{meagher2005covering} showed that the core of $2\text{-}QI(n, 2)$ is $2\text{-}UQI(n, 2)$, when $n$ is even, and $2\text{-}AUQI(n, 2)$, when $n$ is odd. In particular, $2\text{-}UQI(n, 2)$ and $2\text{-}AUQI(n, 2)$ are cores.
  
\begin{proposition} 
\label{vertextransitive}
The hypergraphs $3\text{-}UQI(n, 2)$ and $3\text{-}AUQI(n, 2)$ are vertex-transitive.
\end{proposition}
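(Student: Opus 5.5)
The plan is to use the action of the symmetric group $S_n$ on coordinate positions, together with the symbol swap where it is needed, and to check that these maps preserve the vertex set and the hyperedges.

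Think of each vertex of $3\text{-}QI(n,2)$ as an unordered partition $\{A, \overline{A}\}$ of $[n]=\{1,\ldots,n\}$ into two classes. Using the equivalence noted in the introduction, this is the same as a binary string normalized so that symbol $0$ appears first. For $n=2s$, the vertices of $3\text{-}UQI(n,2)$ are the partitions with $|A|=|\overline{A}|=s$. For $n=2s+1$, the vertices of $3\text{-}AUQI(n,2)$ are the partitions with class sizes $\{s,s+1\}$.

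For a permutation $\sigma\in S_n$, define $\phi_\sigma(\{A,\overline{A}\})=\{\sigma(A),\sigma(\overline{A})\}$. In string language, $\phi_\sigma$ permutes the coordinates and then renormalizes by swapping symbols if the first entry became $1$.

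\textbf{Step 1.} Check that $\phi_\sigma$ is a well-defined bijection on the vertex set. Class sizes are preserved, so uniform and almost-uniform partitions go to partitions of the same kind. The map $\phi_{\sigma^{-1}}$ is its inverse.

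\textbf{Step 2.} Check that $\phi_\sigma$ maps hyperedges to hyperedges. Three strings $v_1,v_2,v_3$ are $3$-qualitatively independent exactly when, for every choice of classes $B_i\in\{A_i,\overline{A_i}\}$, the intersection $B_1\cap B_2\cap B_3$ is nonempty. This condition is symmetric in the two symbols, so it does not depend on the renormalization. It is also preserved by $\sigma$, since $\sigma(B_1)\cap\sigma(B_2)\cap\sigma(B_3)=\sigma(B_1\cap B_2\cap B_3)$. The same argument applied to $\sigma^{-1}$ shows that non-edges go to non-edges. Hence each $\phi_\sigma$ is an automorphism.

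\textbf{Step 3.} Show transitivity. Given two vertices $\{A,\overline{A}\}$ and $\{A',\overline{A'}\}$, choose labels so that $|A|=|A'|$. In the uniform case this is automatic. In the almost-uniform case, take $A$ and $A'$ to be the classes of size $s$. Then pick any bijection $A\to A'$ and extend it by a bijection $\overline{A}\to\overline{A'}$ to obtain $\sigma\in S_n$. This $\sigma$ satisfies $\phi_\sigma(\{A,\overline{A}\})=\{A',\overline{A'}\}$.

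The only delicate point is the normalization convention, which says that the first appearances of symbols are in lexicographic order. Treating vertices as unordered partitions and observing that the hyperedge condition is invariant under the symbol swap removes this issue. Everything else is routine.
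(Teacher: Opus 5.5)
Your proposal is correct and follows essentially the same route as the paper: $S_n$ acts on coordinate positions, each of the eight triple intersections $B_1\cap B_2\cap B_3$ is carried to a set of the same size, so hyperedges are preserved, and transitivity comes from extending a bijection between the size-$\lfloor n/2\rfloor$ classes of two vertices to a permutation. Treating vertices as unordered partitions $\{A,\overline{A}\}$ is slightly more careful than the paper's identification of vertices with $\lfloor n/2\rfloor$-subsets. In the even case that identification is 2-to-1, and it silently ignores that $\sigma$ may map a normalized representative to its complement.
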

\begin{proof}
Let $H$ denote either $3\text{-}UQI(n,2)$ or $3\text{-}AUQI(n,2)$, depending on whether $n$ is even or odd. Since the vertices of $H$ are binary strings, we identify them with subsets of $\Omega=\{1,2,\ldots,n\}$ of size $k=\lfloor n/2\rfloor$. Each permutation $\sigma\in S_n$ acts on a vertex $A\subseteq \Omega$ by $A^\sigma := \{\, a^\sigma : a\in A \,\}$, 
which is again a $k$-subset of $\Omega$. Thus, every $\sigma\in S_n$ induces a permutation of the vertex set of $H$. If $A,B,C \subseteq \Omega$ form a hyperedge of $H$, then by definition, there exists at least one index corresponding to each possible ordered binary triple when $A, B, C$ are considered as strings. Therefore, all the eight triple intersections (with complements taken in $\Omega$) are nonempty. Since permutations preserve cardinalities and intersections, we have
\[
|A^\sigma \cap B^\sigma \cap C^\sigma| = |A \cap B \cap C|,
\]
and similarly for all the eight intersections. Hence $\{A^\sigma, B^\sigma, C^\sigma\}$ also forms a hyperedge of $H$. Thus, every $\sigma \in S_n$ induces an automorphism of $H$. To prove vertex-transitivity, let $A = \{i_1, \dots, i_k\}$ and $B = \{j_1, \dots, j_k\}$ be two distinct vertices. The bijection $i_m \mapsto j_m$ for $1 \le m \le k$ extends to a permutation $\pi \in S_n$. By the previous argument, $\pi$ induces an automorphism of $H$ and satisfies $A^\pi = B$. Thus, $H$ is vertex-transitive.
\end{proof} 
         
\noindent The following result is known about the size of the largest $t$-clique in the core of a hypergraph.

\begin{lemma}
\label{coreclique}\cite{raaphorst2013thesisvariable}
Let $H^\bullet$ be the core of a hypergraph $H$. Then, the size of the maximum $t$-clique, $\omega_t(H)=\omega_t(H^\bullet)$.
\end{lemma}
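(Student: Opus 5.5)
The plan is to prove the two inequalities $\omega_t(H^\bullet)\le \omega_t(H)$ and $\omega_t(H)\le\omega_t(H^\bullet)$ separately. Both use only the definition of a hypergraph homomorphism given above, including the requirement $|f(e)|=|e|$, together with the fact that $H^\bullet$ is a subhypergraph of $H$ admitting a homomorphism $f:H\to H^\bullet$.

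For $\omega_t(H^\bullet)\le\omega_t(H)$, I take a $t$-clique $C$ in $H^\bullet$. Since $H^\bullet$ is a subhypergraph of $H$, every $t$-subset of $C$ that is a hyperedge of $H^\bullet$ is also a hyperedge of $H$. Hence the induced subhypergraph of $H$ on $C$ contains all $t$-subsets of $C$, so $C$ is a $t$-clique in $H$. (Lemma \ref{induced} even shows the induced structures agree, but only the containment is needed.)

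For $\omega_t(H)\le\omega_t(H^\bullet)$, let $C$ be a $t$-clique in $H$ of size $m=\omega_t(H)$, and consider $f(C)$. First I show $f$ is injective on $C$. If $m<t$ this is vacuous, or trivial after interpreting cliques of size below $t$ appropriately. If $m\ge t$ and $x\neq y\in C$ had $f(x)=f(y)$, I extend $\{x,y\}$ to a $t$-subset $e\subseteq C$. This $e$ is a hyperedge of $H$, yet $|f(e)|<t=|e|$, contradicting the definition of homomorphism. So $|f(C)|=m$.

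Next, for any $t$-subset $S$ of $f(C)$, write $S=f(e)$ with $e$ a $t$-subset of $C$. Since $e\in E(H)$, there is a hyperedge $e'$ of $H^\bullet$ with $f(e)\subseteq e'$. Because $H$ is $t$-uniform here, and hence so is its subhypergraph $H^\bullet$, we get $|e'|=t=|f(e)|$, so $S=e'$ is a hyperedge of $H^\bullet$. Therefore $f(C)$ is a $t$-clique of size $m$ in $H^\bullet$.

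The only delicate step is this last one, in the non-uniform setting. If $H$ has hyperedges of varying sizes, containment $f(e)\subseteq e'$ does not by itself force $f(e)\in E(H^\bullet)$. I would handle it by assuming, as throughout the paper, that the hypergraphs are $t$-uniform, which is the case for all the QI hypergraphs considered. Alternatively, one can note that for hyperedge families closed under subsets, as in the variable-strength setting of \cite{raaphorst2013thesisvariable}, $f(e)\subseteq e'$ again gives $f(e)\in E(H^\bullet)$.
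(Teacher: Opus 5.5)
The paper does not prove this lemma; it quotes it from Raaphorst's thesis, so there is no in-paper argument to compare against. Your proof is the standard one. The inclusion $H^\bullet\subseteq H$ gives $\omega_t(H^\bullet)\le\omega_t(H)$. The requirement $|f(e)|=|e|$ forces $f\colon H\to H^\bullet$ to be injective on every $t$-clique and to carry it onto a $t$-clique, which gives the reverse inequality.

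The argument is correct for $t$-uniform hypergraphs. That is the only setting in which the paper uses the lemma, namely for $3$-$QI(8,2)$. Two small tidy-ups are worth making:
\begin{itemize}
\item The step that extends $\{x,y\}$ to a $t$-subset of $C$ silently uses $t\ge 2$. This hypothesis is genuinely needed: for $t=1$, the map sending both vertices of $(\{1,2\},\{\{1\},\{2\}\})$ to $1$ shows its core is a single vertex, while $\omega_1=2$.
\item The case $m<t$ never occurs for nonempty $H$. Every vertex lies in a hyperedge, and by uniformity each hyperedge is itself a $t$-clique of size $t$. Saying this is cleaner than appealing to an interpretation of small cliques.
\end{itemize}

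Your caution about non-uniform hypergraphs is well founded, and the restriction is essential rather than cosmetic. Take $H=(\{1,2,3\},\{\{1,2\},\{1,2,3\}\})$ with the paper's definitions. The identity map is a homomorphism from $H$ onto its subhypergraph $H'=(\{1,2,3\},\{\{1,2,3\}\})$. Moreover, $H'$ is a core, because any endomorphism of $H'$ must be injective on $\{1,2,3\}$ and hence bijective. So $H'$ qualifies as a core of $H$. Yet $\omega_2(H)=2$ while $\omega_2(H')\le 1$. The statement should therefore be read with $H$ $t$-uniform and $t\ge 2$, which is exactly what your proof assumes.
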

         
For a hypergraph $H=(V, E)$, a \emph{strong $k$-colouring} (of the vertices) is a $k$-partition $(S_1, S_2, \ldots S_k)$ of $V$, where vertices in $S_i$ for each $i$ are given the same colour such that no colour appears more than once within any hyperedge. Formally, for every hyperedge $e\in E$, $|e \cap S_i| \leq 1$ for all $i=1,2, \ldots,k$. The \emph{strong chromatic number} of a hypergraph $H$, denoted $\chi_S(H)$, is the smallest integer $k$ for which $H$ admits a strong $k$-colouring. For $H$ in Figure \ref{fig:eg}, a strong $4$-colouring is given by assigning colour 1 to $a$ and $e$, colour 2 to $b$, colour 3 to $c$, colour 4 to $d$, leading to $\chi_S(H)=4$.

\begin{lemma}
\cite{raina}
If $H$, $H'$ are hypergraphs such that there is a homomorphism $H \rightarrow H'$, then $\chi_S(H)\leq \chi_S(H')$.
\end{lemma}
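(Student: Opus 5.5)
The plan is to pull a strong colouring of $H'$ back along the homomorphism. Let $f: H \rightarrow H'$ be a homomorphism and set $k=\chi_S(H')$. Fix an optimal strong $k$-colouring $(S'_1, \ldots, S'_k)$ of $H'$, viewed as a map $c': V' \rightarrow \{1,\ldots,k\}$ with $c'(v)=i$ iff $v\in S'_i$. Define the colouring $c = c'\circ f$ on $V$, so the classes are $S_i = f^{-1}(S'_i)$. These preimages partition $V$, with some classes possibly empty. Empty classes only reduce the number of colours used, which is harmless for an upper bound, though I would remark on it to keep the formal definition of a $k$-partition satisfied.

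The key step is to check that $c$ is strong. Take a hyperedge $e=\{v_1,\ldots,v_m\}\in E$. By the definition of hypergraph homomorphism, $|f(e)|=|e|=m$, so $f$ is injective on $e$. There is also some $e'\in E'$ with $f(e)\subseteq e'$.

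Now suppose two distinct vertices $v_a, v_b\in e$ had the same colour. Then $c'(f(v_a))=c'(f(v_b))$. Since $f$ is injective on $e$, the images $f(v_a)\neq f(v_b)$ are two distinct vertices of $e'$ in the same class $S'_i$. This gives $|e'\cap S'_i|\ge 2$, contradicting that $(S'_i)$ is strong. Hence $|e\cap S_i|\le 1$ for all $i$ and every $e$, so $\chi_S(H)\le k=\chi_S(H')$.

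The only delicate point is the cardinality condition $|f(e)|=|e|$ in the homomorphism definition. Without it, two vertices of $e$ could collapse to one image and the argument would fail. I would emphasize that this condition is exactly what the proof uses. The rest is routine.
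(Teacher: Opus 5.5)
Your proof is correct. Pulling back an optimal strong colouring of $H'$ along $f$ works because $f$ is injective on each hyperedge and maps it into a hyperedge of $H'$. Your remark about empty preimage classes takes care of the formal $k$-partition requirement. The paper gives no proof of its own for this lemma and simply cites \cite{raina}; your argument is the standard one for this fact.
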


\noindent The preceding lemma establishes that the core of a hypergraph preserves the strong chromatic number.
\begin{proposition}
\label{corechrom}
Let $H^\bullet$ be the core of a hypergraph $H$. Then, the strong chromatic number $\chi_S(H)=\chi_S(H^\bullet)$.
\end{proposition}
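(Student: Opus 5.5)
The plan is to obtain the equality as two opposite inequalities, each a direct application of the preceding lemma (monotonicity of $\chi_S$ under homomorphisms). By definition of the core, there is a homomorphism $f: H \rightarrow H^\bullet$. The lemma applied to $f$ immediately gives $\chi_S(H) \leq \chi_S(H^\bullet)$.

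For the reverse inequality I will use that $H^\bullet$ is a subhypergraph of $H$. The inclusion map $i: V^\bullet \rightarrow V$ should be a homomorphism $H^\bullet \rightarrow H$. Indeed, for any hyperedge $e \in E^\bullet$ we have $e \in E$ as well, so $i(e) = e$ is contained in the hyperedge $e$ of $H$ and $|i(e)| = |e|$. The lemma then yields $\chi_S(H^\bullet) \leq \chi_S(H)$.

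Alternatively, I could argue this direction directly. Any strong $k$-colouring $(S_1, \ldots, S_k)$ of $H$ restricts to $V^\bullet$ as $(S_1 \cap V^\bullet, \ldots, S_k \cap V^\bullet)$, discarding empty classes. Every hyperedge of $H^\bullet$ is a hyperedge of $H$, so it still meets each class in at most one vertex.

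Combining the two inequalities gives $\chi_S(H) = \chi_S(H^\bullet)$. I expect no real obstacle. The only point needing care is checking that the inclusion satisfies the paper's definition of a hypergraph homomorphism, namely image containment in some hyperedge together with preservation of cardinality, and this holds trivially because $i$ is injective and $E^\bullet \subseteq E$.
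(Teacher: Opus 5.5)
Your proposal is correct and matches the paper, which obtains the proposition directly from the preceding lemma: apply it once to the homomorphism $H \rightarrow H^\bullet$ and once to the inclusion $H^\bullet \rightarrow H$. Your check that the inclusion satisfies the paper's definition of a homomorphism, using $E^\bullet \subseteq E$, is the one detail the paper leaves implicit.
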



\section{Qualitative Independence Hypergraphs and Merged Johnson Graphs}
\label{sec:3}
Let $n, m, i$ be fixed positive integers with $n\geq m \geq i$ and denote $[n]=\{1,2, \ldots, n\}$. Define \emph{generalized Johnson graph} or \emph{uniform subset graph} or \emph{distance graph}, denoted $J(n, m, i)$, as follows: The vertices of $J(n, m, i)$ are the subsets of $[n]$ of size $m$, where two subsets are adjacent if their intersection has size $i$. Thus, $|V(J(n,m,i))|=\binom{n}{m}$ and $J(n,m,i)$ is $\binom{m}{i}\binom{n-m}{m-i}$-regular. If $I$ is any subset of $[m]$, the \emph{merged Johnson graph}, $J (n,m)_I$, is defined to be the edge-union of the graphs $J (n,m,i)$, where $i \in I$. Thus, $J (n,m)_I$ has as vertex set, all the $m$-subsets of $[n]$, with vertices $M$ and $M'$ adjacent if and only if $|M \cap M'| = i$ for some $i \in I$. Some well-known graphs in this class are Johnson graphs, Kneser graphs, including the Petersen graph, odd graphs, complete graphs, and line graphs of complete graphs.


\subsection{$2$-Section and Shadow Graphs}

We now establish a direct connection between qualitative independence hypergraphs and merged Johnson graphs. For this, we first define the graph derivatives: the shadow and the $k$-section. The \emph{shadow graph}, $D_2(G)$, of a connected graph $G$, is constructed by taking two copies $G_1$ and $G_2$ of $G$ and joining each vertex $u$ of $G_1$ to the neighbours of the corresponding vertex $u'$ in $G_2$. It can be observed that the vertices $u$ and $u'$ can be given the same colour in a proper vertex colouring of $D_2(G)$ and therefore $\chi(D_2(G))=\chi(G)$. Similarly, $\omega(D_2(G))=\omega(G)$. The shadow graph, $D_2(K_3)$ is illustrated in Figure \ref{fig:shadow}. The \emph{$k$-section} of a hypergraph $H=(V, E)$ is defined to be a hypergraph $[H]_k$ with the same vertex set and whose edges are the sets $F \subseteq V$ satisfying either $|F|=k$ and $F \subset e$ for some $e \in E$; or $|F|<k$ and $F=e$ for some $e \in E$. For instance, the $2$-section of $H$ in Figure \ref{fig:eg} is $K_5\backslash\{\{a,e\}, \{a,d\}\}$ and that of $H^\bullet$ in Figure \ref{corefig} is $K_4$, shown respectively in Figures \ref{fig:shadow} and \ref{corefig}.

\begin{figure}
\centering
\begin{tikzpicture}
\node[draw, scale=0.6, black, circle,minimum size=0.1cm,inner sep=0pt, fill=black] at (-1,0) {a};
\node at (-1.25, 0) {$a$};
\node[draw, scale=0.4, black, circle,minimum size=0.1cm,inner sep=0pt, fill=black] at (0,1) {b};
\node at (0,1.25) {$b$};
\node[draw,scale=0.6,  black, circle,minimum size=0.1cm,inner sep=0pt, fill=black] at (0,-1) {c};
\node at (0, -1.25) {$c$};
\node[draw,  black, circle,minimum size=4pt,inner sep=0pt, fill=black] at (4,0) {};
\node at (4.25, 0) {$a'$};
\node[draw,  black, circle,minimum size=4pt,inner sep=0pt, fill=black] at (3,1) {};
\node at (3,1.25) {$b'$};
\node[draw,  black, circle,minimum size=4pt,inner sep=0pt, fill=black] at (3,-1) {};
\node at (3, -1.25) {$c'$};
\draw  (-1,0) -- (0,1);
\draw  (0,1) -- (0,-1);
\draw  (-1,0) -- (0,-1);
\draw (4,0) -- (3,1);
\draw (3,1) -- (3,-1);
\draw  (4,0) -- (3,-1);
\draw [ color=blue](-1,0) -- (3,1);
\draw [ color=blue] (-1,0) -- (3,-1);
\draw [ color=blue] (0,1) -- (4,0);
\draw [ color=blue] (0,1) -- (3,-1);
\draw [ color=blue] (0,-1) -- (4,0);
\draw [ color=blue] (0,-1) -- (3,1);
\node at (1.7,-1.6) {$D_2(K_3)$};
\end{tikzpicture}
\hspace{1cm}
\begin{tikzpicture}
\node[draw, black, circle,thick,minimum size=4pt,inner sep=0pt, fill=black] at (7, 0) {};
\node[draw, black, circle,thick,minimum size=4pt,inner sep=0pt, fill=black] at (8, 1) {};
\node[draw, black, circle,thick,minimum size=4pt,inner sep=0pt, fill=black] at (7.5, 2) {};
\node[draw, black, circle,thick,minimum size=4pt,inner sep=0pt, fill=black] at (6, 1) {};
\node[draw, black, circle,thick,minimum size=4pt,inner sep=0pt, fill=black] at (6.5, 2) {};
\node at (6.75, -0.25) { $c$};
\node at (8.25, 1) {$d$};
\node at (7.5, 2.25) {$e$};
\node at (5.75, 0.75) { $b$};
\node at (6.5, 2.25) { $a$};
\draw (7, 0)--(8, 1)--(7.5, 2)--(6, 1)--(6.5, 2);
\draw (7.5,2)--(7,0)--(6,1)--(7.5,2);
\draw (6.5,2)--(7,0);
\draw (6,1)--(8,1);
\node at (8, -0.5) {$[H]_2$};
\end{tikzpicture}
\caption{Shadow graph of $K_3$ and $2$-section of the hypergraph $H$ from  Figure \ref{fig:eg}}
\label{fig:shadow}
\end{figure}

\begin{proposition}
\label{endo}
Every endomorphism of a hypergraph $H$ is also an endomorphism of its $2$-section $[H]_2$.
\end{proposition}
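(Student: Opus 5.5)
The plan is to unwind the definitions of hypergraph homomorphism and of the $2$-section, and check directly that the defining condition for $[H]_2$ is inherited from that of $H$. Let $f:V\to V$ be an endomorphism of $H=(V,E)$. Since $[H]_2$ has the same vertex set $V$, $f$ is already a map $V(\,[H]_2\,)\to V(\,[H]_2\,)$. What must be shown is this: for every edge $F$ of $[H]_2$ there is an edge $F'$ of $[H]_2$ with $f(F)\subseteq F'$ and $|f(F)|=|F|$.

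First I treat an edge $F=\{u,v\}$ of size $2$. By the definition of the $2$-section there is a hyperedge $e\in E$ with $\{u,v\}\subset e$. Because $f$ is an endomorphism of $H$, there is a hyperedge $e'\in E$ with $f(e)\subseteq e'$ and $|f(e)|=|e|$. The equality $|f(e)|=|e|$ says that $f$ is injective on $e$, so $f(u)\neq f(v)$. Then $f(F)=\{f(u),f(v)\}$ is a $2$-subset of $e'$, hence an edge of $[H]_2$, and $|f(F)|=2=|F|$.

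Next I treat an edge $F$ with $|F|<2$, that is, a singleton hyperedge $F=e=\{u\}\in E$. Here $f(e)=\{f(u)\}$ is contained in some $e'\in E$ with $|e'|\ge 1$. If $|e'|\ge 2$, then $\{f(u)\}$ lies inside a $2$-edge of $[H]_2$; since $f(u)\in e'$, pair it with any other vertex of $e'$. If $|e'|=1$, then $e'=\{f(u)\}$ is itself an edge of $[H]_2$. In either case $f(F)$ is contained in an edge of $[H]_2$ and $|f(F)|=1=|F|$.

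Together these show that $f$ is a homomorphism $[H]_2\to[H]_2$, i.e.\ an endomorphism of $[H]_2$. The only delicate point is the injectivity step in the size-$2$ case. It uses the cardinality clause $|e|=|f(e)|$ in the paper's definition of hypergraph homomorphism, without which two vertices of a common hyperedge could collapse and the $2$-section edge would not be preserved. For the $t$-uniform hypergraphs of interest ($t\ge 2$) the singleton case does not arise, so the argument reduces to the size-$2$ case.
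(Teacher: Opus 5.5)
Your proof is correct and takes the same route as the paper. An edge of $[H]_2$ lies in a hyperedge of $H$, that hyperedge's image lies in a hyperedge, and so the image pair is an edge of $[H]_2$. Your explicit use of $|f(e)|=|e|$ to get $f(u)\neq f(v)$, and your handling of singleton edges, supply details that the paper's short argument leaves implicit.
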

\begin{proof}
Let $f$ be an endomorphism of $H$. If $\{a,b\}$ is an edge in $[H]_2$, then $a$ and $b$ lie together in some hyperedge of $H$. Since $f$ is an endomorphism of $H$, the vertices $f(a)$ and $f(b)$ also lie together in a hyperedge of $H$. Hence, $\{f(a),f(b)\}$ is an edge of $[H]_2$. Therefore, $f$ is an endomorphism of $[H]_2$.
\end{proof}  
             
\noindent The following propositions provide a tool for studying the core of $3$-QI hypergraphs by examining the core of their $2$-sections.

\begin{proposition}
\label{2section}
If $[H]_2$ is a core, then so is $H$.
\end{proposition}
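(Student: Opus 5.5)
The natural route is through Proposition~\ref{endo}. Let $f$ be an arbitrary endomorphism of $H$; we must show that $f$ is an automorphism of $H$. By Proposition~\ref{endo}, $f$ is also an endomorphism of the graph $[H]_2$. Since $[H]_2$ is a core by hypothesis, $f$ is an automorphism of $[H]_2$. In particular, $f$ is a bijection on $V$, which is also the vertex set of $H$. What remains is to check that $f$ is an automorphism of $H$ itself, not merely of its $2$-section. This means $f$ must map hyperedges of $H$ to hyperedges of $H$ (and, if automorphism is read as requiring it, $f^{-1}$ must do the same).

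For the forward direction, take a hyperedge $e\in E$. By the definition of hypergraph homomorphism, $f(e)\subseteq e'$ for some $e'\in E$ with $|f(e)|=|e|$. In the uniform setting relevant here ($t$-uniform, e.g.\ $t=3$), $|e'|=|e|$, so $f(e)=e'\in E$. Thus $f$ induces an injective map $E\to E$, and it is injective because $f$ is injective on $V$. Since $E$ is finite, this map is a bijection of $E$. Consequently $f^{-1}$ also sends hyperedges to hyperedges, and $f$ is an automorphism of $H$. For non-uniform hypergraphs, I would argue the same way by comparing $f$ with its powers: some power $f^k$ is the identity on $V$, because $f$ is a permutation of a finite set. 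From $f(e)\subseteq e'$ with $|e'|\ge |e|$, iterating gives a chain of containments that must close up, which forces all the containments to be equalities.

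The main obstacle is this last step: passing from \emph{bijective on vertices} to \emph{preserves the hyperedge set exactly}. The paper's notion of homomorphism only guarantees that images of hyperedges are contained in hyperedges. I would handle it first via uniformity, which covers all the $3$-QI hypergraphs of interest. I would then add the finiteness/iteration argument so that the proposition holds as stated. Under the paper's definition, which says ``every endomorphism is an automorphism'' and defines an automorphism as a bijective endomorphism, bijectivity alone already suffices. So the core of the proof is just Proposition~\ref{endo} combined with the hypothesis that $[H]_2$ is a core.
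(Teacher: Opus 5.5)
Your argument is correct, and it takes a more direct route than the paper. The paper argues by contradiction through the core. If $H$ is not a core, its core $H^\bullet$ is a proper subhypergraph, which Lemma~\ref{induced} makes induced, so $[H^\bullet]_2$ is a proper induced subgraph of $[H]_2$. Proposition~\ref{endo} then turns $H\to H^\bullet$ into a homomorphism from $[H]_2$ onto that subgraph, contradicting that $[H]_2$ is a core.

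You work straight from the definition instead. An arbitrary endomorphism of $H$ is an endomorphism of $[H]_2$, hence bijective, hence an automorphism of $H$ under the paper's definition (a bijective endomorphism). This avoids Lemma~\ref{induced} and any appeal to the existence of the core $H^\bullet$. Your counting argument for uniform $H$ also works: $f(e)\subseteq e'$ with $|f(e)|=|e|=|e'|$ forces $f(e)=e'$, and an injective self-map of the finite set $E$ is onto. This shows in addition that $f$ preserves $E$ exactly, so for the uniform hypergraphs the paper cares about, the conclusion holds even under the stricter notion of automorphism.

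You should drop the non-uniform ``iteration'' sketch, however, because it is false. Take $V=\{1,2,3\}$, $E=\{\{1,2\},\{1,2,3\}\}$ and $f=(2\,3)$. Then $f$ is a bijective endomorphism in the paper's sense, since $\{1,3\}\subseteq\{1,2,3\}$, and $[H]_2=K_3$ is a core. Yet $f(\{1,2\})=\{1,3\}\notin E$. The chain of containments need not close up: the hyperedge $e'$ containing $f(e)$ can be strictly larger than $f(e)$, and iterating then follows $e'$ rather than $f(e)$. So for non-uniform hypergraphs the statement holds only with the paper's weak definition of automorphism, which, as you note, is all the proposition needs.
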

\begin{proof}
Let $H^\bullet \subsetneq H$ be the core of $H$. Then, there exists a homomorphism $f : H \rightarrow H^\bullet$. By Proposition~\ref{endo}, $f$ is also a homomorphism $[H]_2 \rightarrow [H^\bullet]_2$. Since by Lemma~\ref{induced}, $H^\bullet$ is an induced subhypergraph of $H$,  the $2$-section $[H^\bullet]_2$ is an induced subgraph of $[H]_2$, and it is proper because $H^\bullet \subsetneq H$. Thus, we have a homomorphism from $[H]_2$ to a proper induced subgraph $[H^\bullet]_2$, contradicting the assumption that $[H]_2$ is a core. Hence, $H$ must also be a core.
\end{proof}

\begin{lemma}
\label{shadowcore}
Let $G$ be a graph and $D_2(G)$ be its shadow graph. Then, the core of $D_2(G)$ is isomorphic to a subgraph of $G$.
\end{lemma}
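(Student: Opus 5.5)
The plan is to produce an explicit homomorphism from $D_2(G)$ onto a copy of $G$, and then use the minimality of cores. By definition, $D_2(G)$ contains the copy $G_1$ of $G$ as an induced subgraph; no edges are added inside $G_1$ or inside $G_2$, since the extra edges all go between the two copies. Define the folding map $\phi: V(D_2(G)) \to V(G_1)$ by $\phi(u)=u$ for $u\in G_1$ and $\phi(u')=u$ for the copy $u'\in G_2$ of $u$. The first step is to check that $\phi$ is a homomorphism, and we go through the three kinds of edges in turn.

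\begin{itemize}
\item Edges $\{u,v\}$ inside $G_1$ are sent to themselves.
\item Edges $\{u',v'\}$ inside $G_2$ come from edges $\{u,v\}$ of $G$, so they are sent to edges of $G_1$.
\item The cross edges join $u\in G_1$ to $v'$ where $v$ is a neighbour of $u$ in $G$. Their images $\{u,v\}$ are therefore edges of $G_1$.
\end{itemize}

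In particular no edge is collapsed to a loop, because $u$ is never joined to its own copy $u'$; this requires that $G$ is loopless. Hence $\phi: D_2(G)\to G_1$ is a homomorphism, and $\phi$ is a retraction onto the subgraph $G_1\cong G$.

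The second step is to pass to the core. Let $G_1^\bullet$ be the core of $G_1$, a subgraph of $G_1$, and let $r: G_1\to G_1^\bullet$ be a homomorphism. The composite $r\circ\phi$ maps $D_2(G)$ into its subgraph $G_1^\bullet$. Conversely, inclusion maps $G_1^\bullet$ into $D_2(G)$. So $D_2(G)$ and $G_1^\bullet$ are homomorphically equivalent, and $G_1^\bullet$ is a core. By uniqueness of cores up to isomorphism (the paper cites \cite{micciancio2004using}), the core of $D_2(G)$ is isomorphic to $G_1^\bullet$. This is a subgraph of $G_1\cong G$, so the core of $D_2(G)$ is isomorphic to a subgraph of $G$; in fact it is isomorphic to the core of $G$.

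I do not expect a real obstacle. The only point needing care is the cross edges: one must confirm that $u$ is not adjacent to $u'$, so that $\phi$ never merges the endpoints of an edge. If one prefers to avoid invoking uniqueness of cores, the alternative is the standard argument that the core of $D_2(G)$ is the image of a retraction of minimum order. That image then lies inside $\phi(D_2(G))\subseteq G_1$ after composing with $\phi$, which gives the same conclusion.
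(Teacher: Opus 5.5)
Your proof is correct. It rests on the same key observation as the paper's: identifying each $u'$ with $u$ is a homomorphism $D_2(G)\to G_1$, because cross edges $\{u,v'\}$ occur only when $uv\in E(G)$, and $u$ is never adjacent to $u'$.

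The difference is the order of composition. The paper argues as follows:
\begin{itemize}
\item It starts from a homomorphism $f\colon D_2(G)\to D_2(G)^\bullet$, under the case assumption that the core meets both copies.
\item It folds the vertex set of the core into $G_1$ to obtain $H\subseteq G$, and checks that the folded map $f_1$ is a homomorphism $D_2(G)\to H$.
\item It concludes $H\cong D_2(G)^\bullet$ by order-minimality of the core, using $|V(H)|\le |V(D_2(G)^\bullet)|$ together with $H\subseteq D_2(G)$.
\end{itemize}
You instead fold first, then retract $G_1$ onto its own core, and conclude by uniqueness of cores.

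Your ordering avoids the case split and the vertex-count bookkeeping. It also yields the stronger statement $D_2(G)^\bullet\cong G^\bullet$ immediately, which is exactly Proposition~\ref{shadow}; in the paper, that proposition needs a separate argument built on top of this lemma. In exchange, the paper's route shows that the particular core subgraph $D_2(G)^\bullet$ is isomorphic to its own fold inside $G$. Both arguments ultimately rely on the same minimality and uniqueness property of cores.
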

\begin{proof}
Suppose the core $D_2(G)^\bullet$ contains vertices from both copies of $G$ in $D_2(G)$. Let $f : D_2(G) \longrightarrow D_2(G)^\bullet$ 
be a homomorphism. For each vertex $u$ of $G$, let $u'$ denote its copy in the second copy of $G$ inside $D_2(G)$. Construct an induced subgraph $H$ of $D_2(G)$ as follows: for each vertex $x \in V(D_2(G)^\bullet)$, include $x$ if $x\in G$ otherwise include $u$ if $x=u'$ for some $u\in G$. Thus, $H$ is a subgraph of $G$. Define a map $f_1 : D_2(G) \rightarrow H$ by sending each vertex $v$ of $D_2(G)$ to the corresponding vertex in $H$ determined by $f(v)$. Now for any two vertices $u$ and $v$ adjacent in $D_2(G)$, $f(u)$ and $f(v)$ are adjacent in $D_2(G)^\bullet$ because $f$ is a homomorphism. If $f(u)$ and $f(v)$ lie in the same copy of $G$, then their corresponding vertices in $H$ are adjacent in $H$. If they lie in different copies, then by the definition of a shadow graph, the corresponding vertices in $H$ are also adjacent. Hence $f_1$ is a homomorphism. Since $H$ is a subgraph of $G$ and $|V(H)| \le |V(D_2(G)^\bullet)|$, and $D_2(G)^\bullet$ is minimal with respect to homomorphic equivalence, it follows that $H$ must be isomorphic to $D_2(G)^\bullet$. Therefore, the core of $D_2(G)$ is isomorphic to a subgraph of $G$.
\end{proof}

\begin{proposition}
\label{shadow}
 The core of a graph $G$ is isomorphic to the core of its shadow, $D_2(G)^\bullet$.
\end{proposition}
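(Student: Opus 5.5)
The plan is to show that $G$ and $D_2(G)$ are homomorphically equivalent. Since a core is determined up to isomorphism by the homomorphism-equivalence class of a graph, this gives $G^\bullet \cong D_2(G)^\bullet$ directly.

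First, there is a homomorphism $G \rightarrow D_2(G)$: the inclusion of $G$ as the first copy $G_1$ inside $D_2(G)$. Edges of $G_1$ are edges of $D_2(G)$, so this map is a homomorphism.

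Second, there is a homomorphism $\pi: D_2(G) \rightarrow G$, the ``folding'' map sending $u \mapsto u$ and $u' \mapsto u$. I would check the three types of edges of $D_2(G)$.
\begin{itemize}
\item An edge $\{u,v\}$ inside $G_1$ maps to $\{u,v\} \in E(G)$.
\item An edge $\{u',v'\}$ inside $G_2$ maps to $\{u,v\}$, which is an edge because $G_2$ is a copy of $G$.
\item A cross edge $\{u, v'\}$ exists only when $v$ is a neighbour of $u$ in $G$, by the definition of the shadow graph, so it maps to the edge $\{u,v\}$ of $G$.
\end{itemize}
In every case adjacent vertices go to adjacent, and in particular distinct, vertices, since $u \neq v$ whenever $uv$ is an edge. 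So $\pi$ is a homomorphism.

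To conclude, compose homomorphisms:
\[
D_2(G) \xrightarrow{\pi} G \rightarrow G^\bullet
\quad\text{and}\quad
G^\bullet \hookrightarrow G \rightarrow D_2(G).
\]
These show that $G^\bullet$ and $D_2(G)$ are homomorphically equivalent. Because $G^\bullet$ is a core, it is the core of $D_2(G)$ by the uniqueness of cores up to isomorphism quoted in the introduction. Alternatively, one can cite Lemma~\ref{shadowcore}, which already places $D_2(G)^\bullet$ inside a copy of $G$, and then use minimality in both directions.

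The only step needing care, and the one I expect to be the main obstacle, is making the uniqueness argument explicit rather than just citing it. Given homomorphisms $G^\bullet \leftrightarrow D_2(G)^\bullet$ between two cores, I would compose them to get endomorphisms of each core. Each such endomorphism is an automorphism, hence a bijection, so both maps are injective. The vertex counts are therefore equal, and an injective homomorphism between cores of equal order, composed with its partner, shows the two cores are isomorphic. This is the standard argument and should go through routinely.
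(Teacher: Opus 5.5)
Your proof is correct and is essentially the paper's argument: the paper's extension $f^*$ is exactly your fold map $\pi$ followed by $G\to G^\bullet$, and its step of restricting a homomorphism $D_2(G)\to H$ to the original copy of $G$ is your inclusion $G\to D_2(G)$. The only difference is the finish. The paper argues by contradiction through Lemma~\ref{shadowcore}, while you use, and sketch a proof of, the standard fact that homomorphically equivalent graphs have isomorphic cores; this is cleaner and also settles the isomorphism directly, which the paper's ``proper subgraph of $G^\bullet$'' supposition leaves implicit.
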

\begin{proof}
Let $G^\bullet$ be the core of $G$, and let $f : G \to G^\bullet$ be a homomorphism. We first extend $f$ to a map $f^* : D_2(G) \to G^\bullet$ by sending each vertex $u$ of $G$ and its copy $u'$ in $D_2(G)$ to the same image $f(u)$. 
Now, if $x$ and $y$ are adjacent in $D_2(G)$, then there are two possibilities:

\begin{enumerate}
\item[(i)] $x$ and $y$ lie in the same copy of $G$ (either the original or the duplicate).  
Then adjacency in $D_2(G)$ corresponds to adjacency in $G$, so $f(x)$ and $f(y)$ are adjacent in $G^\bullet$.   
\item[(ii)] $x=u$ lies in the original copy of $G$, and $y=v'$ is the copy of a vertex $v$ such that $u$ is adjacent to $v$ in $G$.  Since $f^*(v') = f(v)$, the vertices $f^*(u)$ and $f^*(v')$ are adjacent in $G^\bullet$.
\end{enumerate}

\noindent Therefore, $f^*$ is a homomorphism. Now suppose the core of $D_2(G)$, denoted $D_2(G)^\bullet$, is a proper subgraph of $G^\bullet$. By Lemma~\ref{shadowcore}, $D_2(G)^\bullet$ is isomorphic to a subgraph $H\subseteq G$ and hence there exists a homomorphism  $D_2(G) \to H$. Restricting this to the original copy of $G$ gives a homomorphism $G \to H$, contradicting the minimality of the core $G^\bullet$. Hence, $G^\bullet$ and $D_2(G)^\bullet$ must be isomorphic.
\end{proof}

\begin{corollary}
\label{nocoreshadow}
Let $G$ be a graph. Then, the shadow, $D_2(G)$, cannot be a core graph.
\end{corollary}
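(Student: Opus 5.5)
The argument works directly with vertex counts. By Proposition~\ref{shadow}, the core $D_2(G)^\bullet$ is isomorphic to the core $G^\bullet$ of $G$. By Lemma~\ref{shadowcore}, $D_2(G)^\bullet$ is isomorphic to a subgraph of $G$. Either way,
\[
|V(D_2(G)^\bullet)| = |V(G^\bullet)| \le |V(G)| < 2|V(G)| = |V(D_2(G))|,
\]
where the strict inequality uses that $G$ is nonempty; the shadow graph is defined for connected graphs, so $G$ has at least one vertex. Hence the core of $D_2(G)$ has strictly fewer vertices than $D_2(G)$, so it is a proper subgraph. Lemma~\ref{induced} (its contrapositive) tells us that a graph whose core has smaller order cannot equal its core. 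Therefore $D_2(G)$ is not a core.

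If one prefers not to lean on the somewhat informal Lemma~\ref{shadowcore}, there is a direct backup. Define $\phi:D_2(G)\to D_2(G)$ by $\phi(u)=u$ and $\phi(u')=u$. This is a homomorphism. Edges inside the first copy are fixed. An edge $u'v'$ inside the second copy maps to $uv$, which is an edge of $G$. A cross edge $uv'$, with $u\sim v$ in $G$, also maps to $uv$. Since $\phi$ identifies $u$ and $u'$ and $u\neq u'$, it is a non-injective endomorphism, so $D_2(G)$ is not a core.

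The only point needing care is that $\phi$ never sends an edge to a loop. A cross edge $uu'$ would be the problem case, but it exists only if $u\sim u$ in $G$, and $G$ is a simple graph, so no such edge occurs. I expect this check to be the only real obstacle, and it is immediate.
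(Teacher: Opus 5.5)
Your first argument is essentially the paper's proof. The paper simply observes that, by Lemma~\ref{shadowcore}, the core of $D_2(G)$ is isomorphic to a subgraph of $G$, and $G$ has fewer vertices than $D_2(G)$. Your count $|V(D_2(G)^\bullet)|\le |V(G)|<2|V(G)|$ just makes this explicit. One small point: the final appeal to Lemma~\ref{induced} is unnecessary, and what you use is not its contrapositive. Once the core is strictly smaller, the composite $D_2(G)\to D_2(G)^\bullet\hookrightarrow D_2(G)$ is already a non-bijective endomorphism, so $D_2(G)$ is not a core by definition.

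Your backup argument is a genuinely different route, and a better one. The folding map $\phi(u)=\phi(u')=u$ is an explicit retraction of $D_2(G)$ onto its first copy. Your check of the three edge types, together with the observation that no edge $uu'$ exists, is complete. It needs only that $G$ is simple and has at least one vertex; connectivity plays no role.

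This route is self-contained: it avoids Lemma~\ref{shadowcore}, whose proof in the paper is rather loose. It also proves more. Since $G\hookrightarrow D_2(G)\xrightarrow{\phi} G$, the graphs $D_2(G)$ and $G$ are homomorphically equivalent. Hence $D_2(G)^\bullet\cong G^\bullet$ immediately, which is Proposition~\ref{shadow}, and it also yields Lemma~\ref{shadowcore} with no extra work.

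The paper's route, by contrast, obtains the corollary only as a by-product of a structural lemma about where the core of the shadow can sit.
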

\begin{proof}
This follows from Lemma \ref{shadowcore}, since $G$ is always a proper subgraph of $D_2(G)$.
\end{proof}


\subsection{Cores of $3\text{-}UQI(n, 2)$ and $3\text{-}AUQI(n, 2)$}

The following is an important condition that characterizes the adjacency relation in $3\text{-}QI(n, 2)$.\\

\noindent\textbf{Condition 1}: A pair of distinct vertices $A$ and $B$ cannot belong to a hyperedge in $3\text{-}QI(n, 2)$ if and only if at least one of the four sets $A \cap B$, $A \cap \overline{B}$, $\overline{A} \cap B$, $\overline{A} \cap \overline{B}$ has cardinality at most one.
\begin{proof}
Let $X$ be one of the sets $A \cap B$, $A \cap \overline{B}$, $\overline{A} \cap B$, $\overline{A} \cap \overline{B}$ with cardinality less than two. Now, suppose that there is a hyperedge in $3\text{-}QI(n, 2)$ with vertices $A$, $B$, and $C$. Then, by definition of qualitative independence, each of $A\cap B\cap C$, $A\cap B \cap \overline{C}$, $A\cap \overline{B} \cap C$, $\overline{A}\cap \overline{B} \cap \overline{C}$, $\overline{A}\cap B \cap C$, $\overline{A}\cap \overline{B} \cap C$, $\overline{A}\cap B \cap \overline{C}$, $A\cap \overline{B} \cap \overline{C}$ must be non-empty. But here, either $X \cap C = \emptyset$ or $X \cap \overline{C}=\emptyset$, which contradicts the fact that vertices in a hyperedge in $3\text{-}QI(n, 2)$ are $3$-qualitatively independent. Conversely, if all these sets have cardinality at least two, then we can construct $C$ in the following manner. We take one element from each of $A \cap B$, $A \cap \overline{B}$, $\overline{A} \cap B$, $\overline{A} \cap \overline{B}$ to form $C$. The rest of the elements in $[n]$ form $\overline{C}$. This is certainly a vertex in $3\text{-}QI(n, 2)$ that forms a hyperedge with $A$ and $B$.
\end{proof}
              			 
\begin{lemma}
\label{uniform}
The shadow graph of the $2$-section of $3\text{-}UQI(n,2)$ is the merged Johnson graph, $J(n, \frac{n}{2})_{I}$, that is,\\$D_2([3\text{-}UQI(n,2)]_2)=J(n, \frac{n}{2})_{I}$, where $n$ is even and $I=\{2,3, \ldots, \frac{n}{2}-2\}$.
\end{lemma}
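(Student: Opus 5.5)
The plan is to build an explicit bijection between the vertex set of $D_2([3\text{-}UQI(n,2)]_2)$ and the $\frac n2$-subsets of $[n]$, and then check that adjacency is preserved in both directions. The adjacency part uses Condition 1, specialised to sets of size $\frac n2$.

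\textbf{Step 1 (vertices).} A vertex of $3\text{-}UQI(n,2)$ is a binary string of length $n$ with exactly $\frac n2$ zeros and $\frac n2$ ones, beginning with $0$ by the lexicographic convention. I identify it with the set $A\subseteq[n]$ of positions carrying $0$. Then $|A|=\frac n2$ and $1\in A$. The map $A\mapsto A$ is a bijection onto the $\frac n2$-subsets containing $1$, and $A\mapsto \overline{A}$ is a bijection onto those not containing $1$.

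I take $G=[3\text{-}UQI(n,2)]_2$. I realise the first copy $G_1$ in $D_2(G)$ by the sets containing $1$, and the second copy $G_2$ by their complements, so the copy $u'$ of $u=A$ is $\overline{A}$. This gives a bijection between $V(D_2(G))$ and $V(J(n,\frac n2)_I)$, and the count $2\cdot\frac12\binom{n}{n/2}=\binom{n}{n/2}$ matches.

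\textbf{Step 2 (edges of $G$).} I will show that two distinct vertices $A,B$ are adjacent in $G$ if and only if $|A\cap B|\in I$. Write $i=|A\cap B|$. Since $|A|=|B|=\frac n2$, the four cells have sizes
\[
|A\cap B|=i,\quad |A\cap\overline B|=|\overline A\cap B|=\tfrac n2-i,\quad |\overline A\cap\overline B|=i .
\]
By Condition 1, $A,B$ lie in a common hyperedge of $3\text{-}QI(n,2)$ exactly when all four cells have size at least $2$, that is, when $2\le i\le \frac n2-2$.

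This is the step I expect to be the main obstacle. The $2$-section is taken in the induced subhypergraph $3\text{-}UQI(n,2)$, so the third vertex $C$ must itself be uniform. Condition 1 only produces some $C$ in $3\text{-}QI(n,2)$, so I will refine its construction. Put one element of each cell into $C$ and reserve one element of each cell for $\overline C$. The remaining $n-8$ elements are then distributed so that $|C|=\frac n2$. This is possible because $0\le\frac n2-4\le n-8$ when $n\ge 8$. Finally, relabel by complementation if needed so that $1$ corresponds to the symbol $0$; this does not affect qualitative independence.

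For $n<8$ the set $I$ is empty and $G$ has no edges, so both graphs are edgeless and the statement is trivial.

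\textbf{Step 3 (edges of the shadow).} In $D_2(G)$ the edges are $u\sim v$ and $u'\sim v'$ whenever $u\sim v$ in $G$, together with the cross edges $u\sim v'$ for the same pairs. Within a copy, the relevant intersections are $|A\cap B|$ and $|\overline A\cap\overline B|$, and both equal $i$. A cross pair has $|A\cap\overline B|=\frac n2-i$. The key observation is that $I$ is invariant under $i\mapsto \frac n2-i$, so every such pair is adjacent in $J(n,\frac n2)_I$ exactly when $i\in I$, which by Step 2 is exactly when $u\sim v$ in $G$.

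Finally, the pair $A,\overline A$ has intersection $0\notin I$, matching the absence of an edge $u u'$ in the shadow. Hence the bijection of Step 1 is a graph isomorphism, giving $D_2([3\text{-}UQI(n,2)]_2)=J(n,\frac n2)_I$.
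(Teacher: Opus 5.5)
Your proof is correct and follows essentially the same route as the paper. You represent the second copy of $[3\text{-}UQI(n,2)]_2$ by complements, use Condition~1 for edges within a copy, and use $|A\cap\overline{B}|=\frac n2-i$ together with the symmetry of $I$ under $i\mapsto\frac n2-i$ for the cross edges. You also make explicit two points the paper leaves implicit: that the third vertex $C$ can be chosen uniform, so adjacency holds in the induced subhypergraph $3\text{-}UQI(n,2)$, and that non-edges correspond as well (including the pair $A,\overline{A}$).
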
	
\begin{proof}
The vertex set of $[3\text{-}UQI(n,2)]_2$ is the set of all $\frac{n}{2}$-subsets of $[n]$ containing $1$. In order to obtain the shadow graph, take two copies of these vertices and represent the second copy using their complement form. Together, they form $V(J(n,\frac{n}{2})_I)$. Within each copy that is in the family that contains $1$ and that which does not, if two subsets intersect at $i$ elements, $i \in I$, then by Condition 1, there will be an edge in $D([3\text{-}UQI(n,2)]_2)$. Now suppose $A$ is a vertex of $[3\text{-}UQI(n,2)]_2$, that is, it contains $1$, and let $B$ be a vertex in the family of subsets that do not have $1$. If $|A \cap B|=i$, where $i \in I$, then $|A \cap \overline{B}|=\frac{n}{2}-i$. So, $A$ and $\overline{B}$ are adjacent in $[3\text{-}UQI(n,2)]_2$ and hence, $A$ and $B$ are adjacent in $D([3\text{-}UQI(n,2)]_2)$. 
\end{proof}

\noindent This result leads to the following sufficient condition for $3\text{-}UQI(n, 2)$ to be a core.
\begin{proposition}\label{thm:evencore}
For even $n$ and $I=\{2, 3, \ldots, \frac{n}{2}-2\}$, the core of $[3\text{-}UQI(n, 2)]_2$ is isomorphic to the core of the merged Johnson graph $J(n, \frac{n}{2})_I$, which is a proper subgraph of $J(n, \frac{n}{2})_I$. Thus, if the core of $J(n, \frac{n}{2})_I$ has order $\frac{1}{2} \binom{n}{\frac {n}{2}}$, then $3\text{-}UQI(n, 2)$ is a core.
\end{proposition}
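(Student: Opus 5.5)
The plan is to read Proposition~\ref{thm:evencore} directly off Lemma~\ref{uniform}, Proposition~\ref{shadow}, Corollary~\ref{nocoreshadow} and Proposition~\ref{2section}. Write $G=[3\text{-}UQI(n,2)]_2$. By Lemma~\ref{uniform}, $D_2(G)=J(n,\frac n2)_I$ with $I=\{2,3,\ldots,\frac n2-2\}$.

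\textbf{Step 1: the two cores agree.} Proposition~\ref{shadow} says the core of $G$ is isomorphic to the core of $D_2(G)$. Substituting $D_2(G)=J(n,\frac n2)_I$ gives
\[
G^\bullet\cong J(n,\tfrac n2)_I^\bullet,
\]
which is the first claim.

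\textbf{Step 2: the core is proper.} Corollary~\ref{nocoreshadow} says the shadow $D_2(G)$ is never a core. Hence the core of $J(n,\frac n2)_I$ is a proper subgraph of it. By Lemma~\ref{induced} this core is also induced.

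\textbf{Step 3: the sufficient condition.}
\begin{itemize}
\item Suppose $J(n,\frac n2)_I^\bullet$ has order $\frac12\binom{n}{n/2}$.
\item By Step 1, $G^\bullet$ has the same order.
\item The vertices of $3\text{-}UQI(n,2)$ are the balanced binary strings whose first symbol is $0$ (lexicographic first appearance). Equivalently, they are the $\frac n2$-subsets containing a fixed element. There are exactly $\frac12\binom{n}{n/2}$ of them, so $|V(G)|=\frac12\binom{n}{n/2}$.
\item Therefore $G$ and $G^\bullet$ have the same order, and Lemma~\ref{induced} gives $G^\bullet=G$, i.e.\ $[3\text{-}UQI(n,2)]_2$ is a core.
\item Proposition~\ref{2section} then shows that $3\text{-}UQI(n,2)$ is a core.
\end{itemize}

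\textbf{Main point to check.} The argument depends on the identification in Lemma~\ref{uniform}: the two halves of $J(n,\frac n2)_I$ (sets containing $1$ and their complements) must be exactly the two copies in the shadow construction. Concretely, a cross edge $A\sim B$ must hold precisely when $A\sim\overline B$ in $G$, and $\overline{\overline B}$ must correspond to $B$. This follows from Condition~1, because the four intersection sizes of $(A,B)$ and $(A,\overline B)$ are the same multiset. Since the excluded intersection sizes $\{0,1,\frac n2-1,\frac n2\}$ are symmetric under $i\mapsto \frac n2-i$, adjacency is preserved under complementation.

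Everything else is a direct application of results already stated. One further caveat: Proposition~\ref{shadow} is stated for connected $G$. I would note that $G$ is connected for even $n\ge 8$, the range where $I\neq\emptyset$, so that the shadow construction applies as written.
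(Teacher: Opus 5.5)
Your proposal is correct and follows essentially the same route as the paper: Lemma~\ref{uniform} with Proposition~\ref{shadow} gives the isomorphism of cores, Corollary~\ref{nocoreshadow} gives properness, and then the order count $\frac12\binom{n}{n/2}=|V([3\text{-}UQI(n,2)]_2)|$ with Lemma~\ref{induced} and Proposition~\ref{2section} gives the sufficient condition. Your two extra checks are harmless sanity checks and are not needed beyond citing Lemma~\ref{uniform}. These are that complementation respects adjacency because $I$ is symmetric under $i\mapsto \frac n2-i$, and that the connectivity hypothesis in the shadow-graph definition is met.
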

\begin{proof}
Lemma \ref{uniform} and Proposition \ref{shadow} imply that $[3\text{-}UQI(n, 2)]_2^\bullet$ is isomorphic to $J(n, \frac{n}{2})_I^\bullet$, where $I=\{2, 3, \ldots, \frac{n}{2}-2\}$. Since $J(n, \frac{n}{2})_I$ is the shadow graph of $[3\text{-}UQI(n, 2)]_2$, by Corollary \ref{nocoreshadow}, its core has to be a proper subgraph. If this proper subgraph has order $\frac{1}{2} \binom{n}{\frac {n}{2}} =|[V(3\text{-}UQI(n,2)]_2|$, then by Lemma~\ref{induced}, the core is $[3\text{-}UQI(n, 2)]_2$ itself. So, by Proposition \ref{2section}, $3\text{-}UQI(n, 2)$ is a core.
\end{proof}

Note that the condition in Proposition~\ref{thm:evencore} is not necessary for $3\text{-}UQI(n, 2)$ to be a core. In the following theorem, we show that $J(8, 4)_{\{2\}} = J(8, 4, 2)$, which has $\binom{8}{4} = 70$ vertices, has a core of order $7 (\neq \frac{1}{2} \binom{8}{4}=35)$. In contrast, in Section~\ref{sec:5}, we establish that $3\text{-}QI(8, 2) = 3\text{-}UQI(8, 2)$ is itself a core.

\begin{theorem}
\label{j842}
The core of the generalized Johnson graph $J(8, 4, 2)$ is the complete graph $K_7$.
\end{theorem}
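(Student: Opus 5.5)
The plan is to show that $\omega(J(8,4,2)) = \chi(J(8,4,2)) = 7$. This suffices. A proper $7$-colouring is a homomorphism $J(8,4,2)\to K_7$. If $J(8,4,2)$ contains a $7$-clique $C$, then composing this homomorphism with the isomorphism $K_7\cong C$ gives a homomorphism from $J(8,4,2)$ onto its subgraph $C\cong K_7$. Since complete graphs are cores, this subgraph is the core, up to isomorphism. Throughout, adjacency means that two $4$-subsets of $[8]$ meet in exactly two points.

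\emph{The clique.} I would take the Fano plane on $\{1,\ldots,7\}$ with lines $L_1,\ldots,L_7$, and set $B_j = L_j\cup\{8\}$. Any two Fano lines meet in exactly one point, so $|B_j\cap B_k| = 1+1 = 2$. Hence $\{B_1,\ldots,B_7\}$ is a $7$-clique.

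\emph{The colouring.} This is the main step. For each line $L_j$ define the colour class
\[
\mathcal{C}_j=\{A\in \tbinom{[8]}{4} : L_j\subseteq A \text{ or } L_j\cap A=\emptyset\}.
\]
To see that $\mathcal{C}_j$ is independent, let $S=[8]\setminus L_j$, so $|S|=5$, and compare two sets $A,A'\in\mathcal{C}_j$.
\begin{itemize}
\item If both contain $L_j$, then $A=L_j\cup\{x\}$ and $A'=L_j\cup\{y\}$ with $x,y\in S$, so they meet in $3$ points.
\item If both avoid $L_j$, they are $4$-subsets of the $5$-set $S$, so again they meet in $3$ points.
\item If $A=L_j\cup\{x\}$ and $A'=S\setminus\{y\}$, they meet in $1$ point or in $0$ points, according to whether $x\neq y$ or $x=y$.
\end{itemize}
No intersection has size $2$, so each class is independent.

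It remains to check that the seven classes cover all $70$ vertices. It is enough to show that every $4$-set $A$ has $A$ or $\overline{A}$ containing some Fano line, since $L_j\subseteq\overline{A}$ means $L_j\cap A=\emptyset$. Replacing $A$ by its complement if needed, we may assume $8\notin A$, so $A$ is a $4$-subset of $[7]$; there are $35$ of these.
\begin{itemize}
\item Each line extended by one of the $4$ remaining points of $[7]$ gives $7\cdot 4=28$ sets that contain a line.
\item The $7$ complements in $[7]$ of lines are the remaining sets. For such an $A=[7]\setminus L_j$, the complement $\overline{A}=L_j\cup\{8\}$ contains $L_j$.
\end{itemize}
Since $28+7=35$, every vertex receives a colour. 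The classes are in fact disjoint, each of size $10$, but disjointness is not needed; assigning each vertex any one class it lies in gives a proper colouring.

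The only real difficulty is finding this colouring. The guiding observation is that independent sets of size $10 = 70/7$ arise from a $3$-set $T$: take all $4$-sets meeting $T$ in $0$ or $3$ points. Choosing the seven triples as Fano lines is what makes these classes cover the vertex set. After that, the argument is the routine intersection checks above.
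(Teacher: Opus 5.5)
Your argument is correct and is essentially the paper's. The paper uses the same Fano-plane clique. It colours the $35$-vertex graph $[3\text{-}QI(8,2)]_2$ with classes $\{1\}\cup l_i$ and $\{1\}\cup f$ ($f$ a $3$-subset of $\mathcal{P}\setminus l_i$), then transfers $\chi$ and $\omega$ to $J(8,4,2)$ via the shadow-graph identity. Your size-$10$ classes are exactly those classes together with their complements, so you simply work on all $70$ vertices at once and bypass the shadow step.

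The one point to make explicit is why $28+7=35$ counts distinct sets. No $4$-set contains two lines, since their union has $5$ points. No line-complement in $[7]$ contains a line, since every two lines meet. This is what makes the two families disjoint and exhaust $\binom{[7]}{4}$.
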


\begin{proof}
We show that $J(8, 4, 2)$ contains a clique of size $7$ and that its chromatic number is $7$. This implies the existence of graph homomorphisms 
\[ K_7 \longrightarrow J(8, 4, 2) \longrightarrow K_7, \] 
thereby establishing that $K_7$ is the core of $J(8, 4, 2)$. The rest of the proof is devoted to proving that 
\[\chi(J(8, 4, 2)) = \omega(J(8, 4, 2)) = 7. \]
Since $J(8,4,2) = D_2([3\text{-}QI(8, 2)]_2)$, it follows that $\chi(J(8,4,2))=\chi([3\text{-}QI(8, 2)]_2)$ and $\omega(J(8,4,2)) = \omega([3\text{-}QI(8, 2)]_2)$. Therefore, we focus on $[3\text{-}QI(8, 2)]_2$, whose vertices are $2$-partitions $\{A, \overline{A}\}$ of $[8]$ with $|A| = |\overline{A}| = 4$.  Since $\overline{A}$ is uniquely determined by $A$, each vertex can be represented by a $4$-subset of $[8]$ containing the element $1$. There are total $\binom{7}{3}=35$ such vertices. Two vertices are adjacent if and only if their corresponding $4$-subsets intersect in exactly two elements. To generate a clique of size $7$, we consider the Fano plane, a finite projective plane of order $2$ on points $\mathcal{P}=\{2,3,4,5,6,7,8\}$ and lines $\mathcal{T}=\{l_1,\ldots,l_7\}$, where each line contains 3 points, each point lying on 3 lines, and each pair of points lies in exactly one line. Since any two distinct lines $l_i$ and $l_j$ in the Fano plane intersect in exactly one point, we have $|(\{1\}\cup l_i)\cap(\{1\}\cup l_j)|=2$. Hence, the vertices $\{1\}\cup l_i$ and $\{1\}\cup l_j$ are adjacent in $[3\text{-}QI(8, 2)]_2$, forming a clique of size $7$ and establishing $\omega([3\text{-}QI(8, 2)]_2)\ge7$.\\

To prove $\chi([3\text{-}QI(8, 2)]_2) = 7$, define
\[\mathcal{C}_i = \big\{\{1\}\cup l_i,\, \{1\}\cup f_{i1},\, \{1\}\cup f_{i2},\, \{1\}\cup f_{i3},\,\{1\}\cup f_{i4}\big\},\]
where each $f_{ij}$ ($j=1,2,3,4$) is a $3$-subset of the $4$-set $\mathcal{P} \setminus l_i$. Since $l_i \cap f_{ij} = \emptyset$, the sets $\{1\}\cup l_i$ and $\{1\}\cup f_{ij}$ intersect only in $\{1\}$, and hence are nonadjacent. Furthermore, for $j \ne k$, the subsets $f_{ij}$ and $f_{ik}$, being $3$-subsets of a $4$-set, intersect in two elements, implying $|(\{1\}\cup f_{ij}) \cap (\{1\}\cup f_{ik})| = 3$, so these vertices are also nonadjacent. Thus, each $\mathcal{C}_i$ ($i = 1, \ldots, 7$) forms an independent set. Because $|l_i \cap l_j| = 1$, it follows that $l_i \not\subseteq \mathcal{P} \setminus l_j$, and hence $\{1\}\cup l_i \notin \mathcal{C}_j$. Moreover, since $|l_i \cup l_j| = 5$, we have $|\mathcal{P} \setminus (l_i \cup l_j)| = 2$, implying that no $f_{ik}$ coincides with any $f_{jl}$. Therefore, the sets $\mathcal{C}_i$ and $\mathcal{C}_j$ are disjoint for $i \ne j$. As each $\mathcal{C}_i$ has $5$ vertices, their union consists of $35$ vertices, forming a partition of the vertex set of $[3\text{-}QI(8, 2)]_2$. Hence, each $\mathcal{C}_i$ serves as a colour class in a proper $7$-colouring of $[3\text{-}QI(8, 2)]_2$.  
Therefore,
\[\chi([3\text{-}QI(8, 2)]_2) = \omega([3\text{-}QI(8, 2)]_2) = 7.\]
\end{proof}

\begin{lemma}
\label{almostuniform}
The $2$-section of $3\text{-}AUQI(n,2)$ is the merged Johnson graph, $J(n, \frac{n-1}{2})_{I}$, that is, $[3\text{-}AUQI(n,2)]_2 = J(n, \frac{n-1}{2})_{I}$, where $n$ is odd and $I=\{2,3, \ldots, \frac{n-1}{2}-2\}$.
\end{lemma}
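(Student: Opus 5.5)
We identify vertices with subsets, read off the cell sizes of two vertices, and then check both directions of the edge correspondence. Throughout, write $k=\frac{n-1}{2}$.

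\textbf{Vertices.} A vertex of $3\text{-}AUQI(n,2)$ is a binary string in which one symbol occurs $k$ times and the other $k+1$ times. Since $k\neq k+1$, the unordered partition $\{A,\overline{A}\}$ has a unique smaller class $A$ with $|A|=k$. We identify the vertex with this $k$-subset $A\subseteq[n]$. The lexicographic normalisation only fixes which symbol is written first, so it does not affect this identification. The map $\{A,\overline{A}\}\mapsto A$ is therefore a bijection onto the $k$-subsets of $[n]$, which is exactly $V(J(n,k)_I)$. We also note that $k\ge 4$ is needed for the vertices to satisfy the requirement that each symbol appears at least $2^{2}=4$ times. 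This is consistent with $I$ being nonempty precisely when $k\ge 4$.

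\textbf{Cell sizes.} Let $A,B$ be distinct vertices with $|A\cap B|=i$. Then
\[|A\cap\overline{B}|=|\overline{A}\cap B|=k-i \quad\text{and}\quad |\overline{A}\cap\overline{B}|=n-2k+i=i+1.\]

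\textbf{Edges of the $2$-section give $i\in I$.} Suppose $A,B$ lie in a common hyperedge of $3\text{-}AUQI(n,2)$. This hyperedge is also a hyperedge of $3\text{-}QI(n,2)$, so by Condition 1 all four cells have size at least $2$. From the cell sizes this means $i\ge 2$ and $k-i\ge 2$, so $i\in\{2,\dots,k-2\}=I$. Hence $A,B$ are adjacent in $J(n,k)_I$.

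\textbf{$i\in I$ gives an edge of the $2$-section.} Now suppose $i\in I$. Condition 1 produces a third vertex $C$ only in $3\text{-}QI(n,2)$, and that $C$ has just four elements. So we must instead build $C$ with $|C|=k$, keeping it inside $3\text{-}AUQI(n,2)$.

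\begin{enumerate}
\item Put into $C$ one element from each of the four cells; each cell has at least two elements.
\item Enlarge $C$ to size exactly $k$ using further elements, never taking a whole cell. This is possible because the number of elements available while leaving at least one element of each cell outside $C$ is $n-4$, and $4\le k\le n-4$ since $k\ge4$ and $n=2k+1$.
\end{enumerate}

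Then every cell of $\{A,B\}$ meets both $C$ and $\overline{C}$. Consequently all eight triple intersections $A^{\pm}\cap B^{\pm}\cap C^{\pm}$ are nonempty, so $\{A,B,C\}$ is $3$-qualitatively independent. Moreover $C\neq A$ because $C$ meets $\overline{A}\cap B$, and $C\neq B$ because $C$ meets $A\cap\overline{B}$. Since $|C|=k$, $C$ is a vertex of $3\text{-}AUQI(n,2)$. Thus $\{A,B,C\}$ is a hyperedge, and $\{A,B\}$ is an edge of the $2$-section.

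The main obstacle is the last step: producing a witness $C$ of the exact almost-uniform weight $k$ rather than merely some $3$-QI vertex. The inequality $k\le n-4$ is what makes this possible, and it uses oddness essentially, since $|\overline{A}\cap\overline{B}|=i+1$ is automatically at least $3$. Finally, the degenerate range $k<4$, where $I=\emptyset$, should be remarked on separately as a vacuous case.
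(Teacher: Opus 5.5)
Your proof is correct and follows the paper's route: you identify each vertex with its $k$-element class and read adjacency off Condition~1 using the cell sizes $i,\;k-i,\;k-i,\;i+1$. It is also more careful than the paper's one-line proof. The witness $C$ built in the proof of Condition~1 has only four elements, so for $n\ge 11$ it is not a vertex of $3\text{-}AUQI(n,2)$. Your padding of $C$ to exactly $k$ elements without exhausting any cell supplies the step the paper omits.
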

\begin{proof}
The vertex set here consists of all the $\frac{n-1}{2}$-subsets of $[n]$. By Condition 1, two vertices belong to a hyperedge if and only if they intersect at $i$ elements for some $i \in I$. 
\end{proof}
         
\noindent Thus, we also obtain a sufficient condition for $3\text{-}AUQI(n,2)$ to be a core.
\begin{proposition}\label{almostuniform2}
For odd $n$ and $I=\{2,3, \ldots, \frac{n-1}{2}-2\}$, if the merged Johnson graph, $J(n, \frac{n-1}{2})_{I}$, is a core, then so is $3\text{-}AUQI(n, 2)$.
\end{proposition}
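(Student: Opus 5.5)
This is a direct combination of Lemma~\ref{almostuniform} and Proposition~\ref{2section}. Let $H = 3\text{-}AUQI(n,2)$ with $n$ odd and $I=\{2,3,\ldots,\frac{n-1}{2}-2\}$.

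The first step is to identify the $2$-section. By Lemma~\ref{almostuniform}, $[H]_2 = J(n,\frac{n-1}{2})_I$. Here the vertices of $H$ are viewed as their $\frac{n-1}{2}$-subsets (the class of the smaller symbol). Two such subsets lie in a common hyperedge exactly when their intersection size lies in $I$, which is Condition~1 applied with $|A|=|B|=\frac{n-1}{2}$. Unlike the even case, there is no complementation ambiguity: each vertex of $3\text{-}AUQI(n,2)$ corresponds to a unique $\frac{n-1}{2}$-subset, namely the minority class. So no shadow construction is needed, and $[H]_2$ is literally the merged Johnson graph rather than half of it.

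The second step is to transfer core-ness. By hypothesis $J(n,\frac{n-1}{2})_I$ is a core, so $[H]_2$ is a core. Proposition~\ref{2section} then gives immediately that $H$ is a core.

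The only point needing care is the identification in Lemma~\ref{almostuniform}. One must check both directions of Condition~1 in the odd setting, since $|A\cap B|$ determines all four cells $A\cap B$, $A\cap\overline B$, $\overline A\cap B$, $\overline A\cap\overline B$. Their sizes are $i$, $\frac{n-1}{2}-i$, $\frac{n-1}{2}-i$ and $i+1$, and all are at least $2$ precisely when $2\le i\le \frac{n-1}{2}-2$. The same check confirms that the hyperedge constructed in the converse of Condition~1 can be chosen almost-uniform. The constructed $C$ takes one point from each cell, so it has size $4$ before adjustment. To land in $3\text{-}AUQI(n,2)$, I would add further points from the cells, each of which has at least $2$ elements, until $|C|=\frac{n-1}{2}$ or $\frac{n+1}{2}$, while keeping at least one point of every cell outside $C$. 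This is possible because each cell has at least two points and $n\ge 9$ is forced for $I$ to be nonempty. I expect this adjustment to be the main (though mild) obstacle, as the rest is purely formal.
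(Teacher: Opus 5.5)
Your proposal is correct and follows the paper's proof exactly: it identifies $[3\text{-}AUQI(n,2)]_2$ with $J(n,\frac{n-1}{2})_I$ via Lemma~\ref{almostuniform} and then applies Proposition~\ref{2section}. Your extra check fills in a detail that the paper's one-line proof of Lemma~\ref{almostuniform} leaves implicit, since Condition~1 is stated for $3\text{-}QI(n,2)$ rather than for the induced subhypergraph: you show that the third vertex $C$ can be chosen almost-uniform, which works because every cell has at least two points and $4\le \frac{n-1}{2}\le n-4$ for $n\ge 9$.
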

\begin{proof}
Let $J(n, \frac{n-1}{2})_{I}$, where $I=\{2,3, \ldots, \frac{n-1}{2}-2\}$, be a core. This implies, by Lemma \ref{almostuniform}, that $[3\text{-}AUQI(n,2)]_2$ is a core and hence by Proposition \ref{2section}, $3\text{-}AUQI(n,2)$ is a core.
\end{proof}
We now integrate existing results on the automorphism groups of merged Johnson graphs with our observation to obtain a result on the core of $3\text{-}AUQI(n, 2)$. To proceed, we first recall some standard notation from group theory.\\

A group $(G, *)$ is said to \emph{act on a set $X$} when there is a map $\phi:G \times X \rightarrow X$ such that the following conditions hold for all elements $x \in X$.
\begin{enumerate}
    \item $\phi(e,x)=x$ where $e$ is the identity element of $G$.
    \item $\phi(g,\phi(h,x))=\phi(g*h,x)$ for all $g,h \in G$.
\end{enumerate}
 Here, $\phi$ is known as the \emph{group action}. The \emph{orbit} of an element $x \in X$ is the set $\{\phi(g, x) : g \in G\}$. Orbits of a group $G$ on $X \times X$ are called \emph{orbitals}. The automorphism group of merged Johnson graphs is discussed in detail by G A Jones in \cite{jones2005automorphisms} proving that the automorphism group of $J(n, \frac{n-1}{2})_I$, denoted $Aut(J(n, \frac{n-1}{2})_I)$, where $I=\{2,3, \ldots \frac{n-1}{2} -2 \}$, is $S_n$ with orbitals $\Gamma_0, \ldots, \Gamma_{\frac{n-1}{2}}$, where \[\Gamma_i = \{(M, M') \in \Omega^2 : |M \cap M'| = i \},\] $S_n$ is the permutation group on $n$ symbols, and $\Omega$ is the set of all $\frac{n-1}{2}$-subsets of $[n]$. These insights allow us to formulate the following sufficient condition for determining when $3\text{-}AUQI(n,2)$ is a core. 

\begin{corollary}\label{final2}
For odd $n$ and $I=\{2,3, \ldots \frac{n-1}{2} -2 \}$, the hypergraph $3\text{-}AUQI(n, 2)$ is a core, if the endomorphism monoid of $J(n, \frac{n-1}{2})_I$, denoted $End(J(n, \frac{n-1}{2})_I)$, is $S_n$ with orbitals $\Gamma_0, \ldots, \Gamma_{\frac{n-1}{2}}$.
\end{corollary}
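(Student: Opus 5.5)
The plan is to reduce the corollary to Proposition~\ref{almostuniform2}. It therefore suffices to show that, under the stated hypothesis, the merged Johnson graph $G:=J(n,\frac{n-1}{2})_I$ with $I=\{2,3,\ldots,\frac{n-1}{2}-2\}$ is a core. In other words, every endomorphism of $G$ must be an automorphism. Once that is in hand, Proposition~\ref{almostuniform2} (itself resting on Lemma~\ref{almostuniform} and Proposition~\ref{2section}) gives that $3\text{-}AUQI(n,2)$ is a core.

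The first step is to interpret the hypothesis precisely. It states that $End(G)$ coincides with $S_n$ acting on $\Omega$, the $\frac{n-1}{2}$-subsets of $[n]$, via $M\mapsto M^\sigma$, with orbitals $\Gamma_0,\ldots,\Gamma_{\frac{n-1}{2}}$. The following points then need checking.
\begin{enumerate}
\item Each such induced map is a bijection of $\Omega$, since $\sigma^{-1}$ induces its inverse.
\item It preserves every orbital $\Gamma_i$, because $|M^\sigma\cap M'^\sigma|=|M\cap M'|$. This is exactly the argument already used in Proposition~\ref{vertextransitive}.
\item In particular, it preserves the edge set $\bigcup_{i\in I}\Gamma_i$, so it is an automorphism of $G$.
\item By Jones' result quoted above, $Aut(G)=S_n$ with these same orbitals.
\end{enumerate}
Hence the hypothesis says $End(G)=Aut(G)$, which is the definition of $G$ being a core.

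An equivalent way to phrase this, closer to the paper's style, uses Lemma~\ref{induced}. Let $G^\bullet$ be the core of $G$ and $f:G\to G^\bullet\subseteq G$ a retraction. Then $f\in End(G)$, so by hypothesis $f$ is induced by some $\sigma\in S_n$ and is therefore bijective on $\Omega$. Hence $|V(G^\bullet)|=|V(G)|$, and Lemma~\ref{induced} yields $G^\bullet=G$.

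The only delicate point is the first step. One must make sure the hypothesis is read as ``every endomorphism is induced by a permutation of $[n]$,'' with the orbital condition guaranteeing that such maps are genuine bijections preserving every intersection size, and not merely non-adjacency-preserving maps. Beyond that, the argument is a direct chaining of Proposition~\ref{almostuniform2} with the definition of a core.
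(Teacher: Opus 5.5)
Your proposal is correct and follows the paper's own argument. The hypothesis amounts to $End(J(n,\frac{n-1}{2})_I)=Aut(J(n,\frac{n-1}{2})_I)$, so the merged Johnson graph is a core, and Proposition~\ref{almostuniform2} then gives that $3\text{-}AUQI(n,2)$ is a core. Your check that each permutation-induced map is an automorphism makes explicit a step the paper takes for granted; only $End\subseteq Aut$ is needed, so the appeal to Jones' theorem is optional.
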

\begin{proof}
    If $End(J(n, \frac{n-1}{2})_I)=Aut(J(n, \frac{n-1}{2})_I)$, then $J(n, \frac{n-1}{2})_I$ is a core. This implies, by Proposition \ref{almostuniform2} that $3\text{-}AUQI(n, 2)$ is a core.
\end{proof}

\section{Structural Properties of $3\text{-}QI(8,2)$}
\label{sec:4}

In this section, we list the strong chromatic number and the size of the largest $3$-cliques of of $3$-$QI(8,2)$. In addition, we determine the structure of the maximum strongly independent sets of $3$-$QI(8,2)$. These results will be used in Section~\ref{sec:5} to establish that the hypergraph $3$-$QI(8,2)$ is a core.

\begin{proposition}\label{chrom} 
The strong chromatic number of $3$-$QI(8,2)$ is $\chi_S(3$-$QI(8,2))=7$.
\end{proposition}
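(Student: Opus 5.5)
The plan is to reduce the strong colouring problem for $3$-$QI(8,2)$ to ordinary proper colouring of its $2$-section $[3\text{-}QI(8,2)]_2$, and then reuse the computation in the proof of Theorem~\ref{j842}.

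\emph{Identifying the hypergraph.} Since $n=8=2^3$ and every symbol must appear at least $2^{2}=4$ times, every vertex of $3$-$QI(8,2)$ has exactly four $0$'s and four $1$'s. Hence $3$-$QI(8,2)=3$-$UQI(8,2)$. Its $35$ vertices are the $4$-subsets of $[8]$ containing $1$.

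\emph{Strong colourings are proper colourings of the $2$-section.} A partition $(S_1,\ldots,S_k)$ is a strong colouring of $H$ exactly when no two distinct vertices lying in a common hyperedge get the same colour. That is, it is a strong colouring precisely when it is a proper colouring of the graph $[H]_2$. Therefore $\chi_S(3\text{-}QI(8,2))=\chi([3\text{-}QI(8,2)]_2)$.

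\emph{Describing the $2$-section.} By Condition~1, two distinct vertices $A,B$ lie in a common hyperedge if and only if all four sets $A\cap B$, $A\cap\overline B$, $\overline A\cap B$, $\overline A\cap\overline B$ have at least two elements. For $4$-subsets of $[8]$ these four sizes are $|A\cap B|$, $4-|A\cap B|$, $4-|A\cap B|$ and $|A\cap B|$. So the condition holds if and only if $|A\cap B|=2$. This is exactly the adjacency used for $[3\text{-}QI(8,2)]_2$ in the proof of Theorem~\ref{j842}.

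\emph{Upper bound.} The proof of Theorem~\ref{j842} partitions the $35$ vertices into the seven classes $\mathcal{C}_i$, built from the lines $l_i$ of the Fano plane on $\{2,\ldots,8\}$. In each class, any two members meet in $1$ or $3$ elements, so no class contains two vertices at intersection $2$. Hence this partition is a strong $7$-colouring, and $\chi_S\le 7$.

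\emph{Lower bound.} The seven vertices $\{1\}\cup l_i$ pairwise intersect in exactly two elements. By the previous step they are pairwise adjacent in the $2$-section, so they need seven distinct colours. Hence $\chi_S\ge 7$.

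No step is a real obstacle, since the combinatorics was already done in Theorem~\ref{j842}. The one point needing care is the claim that strong colourability depends only on pairwise co-membership in hyperedges. This holds because the definition requires $|e\cap S_i|\le 1$ for every hyperedge $e$ and every colour class $S_i$.
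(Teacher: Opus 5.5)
Your proof is correct and follows the paper's route. The paper also reduces to $\chi_S(H)=\chi([H]_2)$, citing Berge where you prove it directly from the definition, and then takes $\chi([3\text{-}QI(8,2)]_2)=7$ from the Fano-plane clique and colouring in the proof of Theorem~\ref{j842}; your extra checks ($3$-$QI(8,2)=3$-$UQI(8,2)$, and adjacency in the $2$-section being exactly $|A\cap B|=2$ via Condition~1) are accurate and simply make explicit what the paper takes for granted.
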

\begin{proof}
From Theorem~\ref{j842}, we know that $\chi([3$-$QI(8,2)]_2)=7$. Since for any hypergraph $H$, $\chi_S(H)=\chi([H]_2)$~\cite{berge1984hypergraphs}, the result follows directly.
\end{proof}

Next, we examine the size of the largest $3$-cliques in $3$-$QI(8,2)$. Since $\omega([3$-$QI(8,2)]_2)=7$, we have $\omega_3(3$-$QI(8,2))\leq 7$. To determine the exact value of $\omega_3(3$-$QI(8,2))$, we use the connection between $t$-cliques in $t$-$QI(n,g)$ and the columns of a covering array of strength $t$ with $g$ symbols. 

\begin{proposition}\label{clique}\cite{raina}
The size of the largest $3$-clique in $3$-$QI(8,2)$ is $\omega_3(3$-$QI(8,2))=4$.
\end{proposition}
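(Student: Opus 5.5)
The plan is to prove the two inequalities $\omega_3(3\text{-}QI(8,2))\ge 4$ and $\omega_3(3\text{-}QI(8,2))\le 4$ separately. Throughout I use the correspondence already mentioned in the paper: a $3$-clique of size $m$ in $3\text{-}QI(8,2)$ is the same thing as the set of columns of a $3\text{-}CA(8,m,2)$. Each column is normalized so that its first symbol is $0$. Complementing a column does not affect $3$-qualitative independence, so this normalization loses nothing.

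\emph{Lower bound.} I will take the $8$ binary rows of length $4$ with even weight. Any $3$ of the $4$ coordinates determine the fourth by parity. Hence the projection onto any $3$ columns is a bijection onto $\{0,1\}^3$, and every ordered triple of symbols is covered. After normalizing, the four columns are distinct vertices of $3\text{-}QI(8,2)$ that pairwise lie in common hyperedges and all of whose triples are hyperedges. This gives a $3$-clique of size $4$.

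\emph{Upper bound.} Suppose there are $5$ columns of length $8$ that are pairwise distinct and $3$-qualitatively independent.
\begin{enumerate}
\item Any $3$ of them must realize all $8$ patterns in only $8$ rows, so each pattern appears exactly once. This makes the array an orthogonal array of strength $3$ and index $1$.
\item Complementing columns preserves this property, so I may assume row $1$ is all zeros.
\item Each column is then a set $A_i\subseteq\{2,\ldots,8\}$ of the rows where it has a $1$. The index-$1$ condition gives $|A_i|=4$, $|A_i\cap A_j|=2$ and $|A_i\cap A_j\cap A_k|=1$ for distinct $i,j,k$.
\item Pass to the complements $B_i=\{2,\ldots,8\}\setminus A_i$, which are $3$-subsets of a $7$-set. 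Inclusion--exclusion gives $|B_i\cap B_j|=7-8+2=1$ and $|B_i\cap B_j\cap B_k|=7-12+6-1=0$.
\item Since no point lies in three of the $B_i$, each of the $7$ points lies in at most two of them. This caps the number of incidences at $14$.
\item But five $3$-sets have $15$ incidences, which is a contradiction. Hence $\omega_3\le 4$.
\end{enumerate}

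The main point needing care is justifying the step from $3$-qualitative independence on $8$ rows to exactly one occurrence of each pattern. This is a simple count: $8$ patterns must fit into $8$ rows. The resulting intersection sizes then follow directly from counting rows by pattern. An alternative route would be to cite the known value $3\text{-}CAN(5,2)=10>8$, but the counting argument above is self-contained and I would use it.
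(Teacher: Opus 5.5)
Your proof is correct. It rests on the same clique/covering-array correspondence that the paper invokes, but the paper gives no argument of its own for the upper bound. It simply cites \cite{raina}, which amounts to importing the fact that no $3\text{-}CA(8,5,2)$ exists, i.e.\ $3\text{-}CAN(5,2)>8$. You prove that fact directly. The index-one counting turns five columns into five $3$-subsets of a $7$-set that pairwise meet in exactly one point, with no point in three of them, and the double count $15>14$ finishes it. This is a special case of the Bush bound $k\le t+1$ for index-one orthogonal arrays with $s\le t$.

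Your route buys self-containedness and a closer fit with the paper's own machinery. In the paper's representation a vertex is $\{1\}\cup B$, where $B$ is the set of other positions of the symbol $0$, so your intersection conditions are exactly Condition~2. A maximum $3$-clique is then four lines of a Fano plane with no three concurrent. This ties the value $4$ to the Fano-plane $7$-clique of $[3\text{-}QI(8,2)]_2$ in Theorem~\ref{j842}: the concurrent triples of lines are precisely the triples that fail to be hyperedges. The citation route is shorter and extends to other $n$ wherever covering array numbers are known, at the cost of relying on an external result.

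One small point should be stated explicitly in your lower bound. Each column of the even-weight array has exactly four $0$s and four $1$s, which is immediate from the bijection property, and this is what makes the columns vertices of $3\text{-}QI(8,2)$. The normalization step is vacuous there, since the all-zero row can be listed first.
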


We now study the strongly independent sets in $3$-$QI(8,2)$. A set of vertices in a hypergraph $H$ is said to be \emph{strongly independent} if no two belong to a common hyperedge in $H$. The maximum cardinality of such a set is the \emph{strong independence number}, denoted $\alpha_S(H)$. For example, in the hypergraph $H$ from Figure~\ref{fig:eg}, $\{a,e\}$ is a strongly independent set and $\alpha_S(H)=2$.  
As shown in Theorem~\ref{j842}, the vertices of $3$-$QI(8,2)$ are $4$-subsets of $[8]$, each containing the element $1$. For brevity, we represent each vertex by a string of length 4.

\begin{theorem}\label{types}
Let $a,b,c,d,e,f,g,h$ be distinct elements of $[8]$.  
The only possible maximum strongly independent subsets of $3$-$QI(8,2)$ containing $A=abcd$ are of the following three types:
\begin{itemize}
    \item \textbf{Type I:} $A,A_1,A_2,A_3,A_4$
    \item \textbf{Type II:} $A, A_i,B_{1i},B_{2i},B_{3i}$, for $i=1,2,3,4$
    \item \textbf{Type III:} $A,B_{j1},B_{j2},B_{j3},B_{j4}$, for $j=1,2,3$
\end{itemize}
where the vertices $A_i$ and $B_{ij}$ are listed in Table~\ref{tab:ind}.
\begin{table}[htbp]
\centering
\caption{Vertices that do not belong to any hyperedge containing $A=abcd$ in $3$-$QI(8,2)$}
\begin{tabular}{|c|c|c|c|}
\hline
$A_1=afgh$ & $B_{11}=acde$ & $B_{21}=abde$ & $B_{31}=abce$ \\ \hline
$A_2=aegh$ & $B_{12}=acdf$ & $B_{22}=abdf$ & $B_{32}=abcf$ \\ \hline
$A_3=aefh$ & $B_{13}=acdg$ & $B_{23}=abdg$ & $B_{33}=abcg$ \\ \hline
$A_4=aefg$ & $B_{14}=acdh$ & $B_{24}=abdh$ & $B_{34}=abch$ \\ \hline
\end{tabular}
\label{tab:ind}
\end{table}
\end{theorem}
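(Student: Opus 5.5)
We work in the graph $G=[3\text{-}QI(8,2)]_2$, since a set is strongly independent in $3$-$QI(8,2)$ exactly when it is independent in $G$. Vertices are $4$-subsets of $[8]$ containing the fixed element $a$ (playing the role of $1$). By Condition 1, with $|A|=|B|=4$ we have $|A\cap\overline B|=|\overline A\cap B|=4-|A\cap B|$ and $|\overline A\cap\overline B|=|A\cap B|$. Hence distinct $A,B$ are nonadjacent if and only if $|A\cap B|\in\{1,3\}$. Since both sets contain $a$, $|A\cap B|=1$ means $A\cap B=\{a\}$, i.e.\ $B=a\cup T$ with $T$ a $3$-subset of $\{e,f,g,h\}$; these are $A_1,\dots,A_4$. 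Also $|A\cap B|=3$ with $a\in B$ means $B$ drops one of $b,c,d$ and adds one of $e,f,g,h$; these are the twelve $B_{ij}$, where $j$ indexes the dropped element ($b,c,d$ for $j=1,2,3$) and the second index the added element ($e,f,g,h$). This confirms Table~\ref{tab:ind} is the full non-neighbourhood of $A$, with $16$ vertices.

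A maximum independent set containing $A$ is $A$ together with a maximum independent set of the induced subgraph $N$ on these $16$ vertices. I would record adjacencies inside $N$ using the same rule.
\begin{itemize}
\item $A_i,A_k$ for $i\neq k$ share $a$ and two of $\{e,f,g,h\}$, so the intersection is $3$: nonadjacent. Thus $\{A_1,\dots,A_4\}$ is independent.
\item $A_i$ versus $B_{jk}$: the intersection is $\{a\}$ plus the added element of $B_{jk}$ if it lies in $A_i$. It has size $1$ (nonadjacent) exactly when the added element is the one missing from $A_i$, i.e.\ $k=i$; otherwise it has size $2$ (adjacent).
\item $B_{jk}$ versus $B_{j'k'}$: the intersection is $\{a\}\cup(\{b,c,d\}$ minus the two dropped elements$)\cup(\text{common added element})$. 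If $j=j'$ its size is $3$ when $k\ne k'$: nonadjacent. If $j\ne j'$ it is $2$ or $3$, and equals $3$ exactly when $k=k'$. So $B_{jk}\sim B_{j'k'}$ iff $j\ne j'$ and $k\ne k'$.
\end{itemize}

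Thus $N$ is the complement of the union of a $K_4$ on the $A$'s with a rook-type structure on the $3\times4$ grid of $B$'s. An independent set in $N$ is a clique in the complement graph $\overline N$. In $\overline N$:
\begin{itemize}
\item $B_{jk}$ and $B_{j'k'}$ are joined iff they share a row or a column;
\item $A_i$ is joined to every $A$ and to exactly the column $\{B_{1i},B_{2i},B_{3i}\}$.
\end{itemize}
I would then classify maximal cliques in $\overline N$. A clique among grid cells must lie in a single row or a single column, since three cells pairwise sharing a row or column are collinear in a rook graph. A clique can contain at most one column-index $i$ among the $A$'s once it contains any $B$. So the candidates are:
\begin{itemize}
\item all four $A$'s (size $4$), giving Type I;
\item $A_i$ with column $i$ (size $4$), giving Type II;
\item a full row $B_{j1},\dots,B_{j4}$ (size $4$), giving Type III.
\end{itemize}
Mixed cliques such as $A_i$ with a partial row have size at most $2$.

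This shows the maximum independent sets containing $A$ have size $5$ and are exactly the three types. The main obstacle is justifying the word "maximum" globally, i.e.\ that $\alpha_S=5$ rather than merely maximal through $A$. By vertex-transitivity (Proposition~\ref{vertextransitive}) every vertex behaves like $A$, so the local bound of $5$ applies to any independent set. Alternatively, $35/7=5$ is consistent with the colouring of Theorem~\ref{j842}, whose colour classes are exactly of Type I. The careful case check of clique shapes in the rook-plus-$K_4$ graph is the step I would verify most carefully.
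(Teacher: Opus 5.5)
Your proof is correct and follows essentially the same route as the paper. Both arguments list the 16 non-neighbours of $A$ via intersection sizes $1$ or $3$, observe that two of them can coexist exactly when they share a row or column of Table~\ref{tab:ind} (a $4\times 4$ rook's graph), and read off the rows and columns as the only size-$4$ extensions; your collinearity argument for cliques in the rook's graph makes explicit the step behind the paper's terse final sentence. For the global bound $\alpha_S=5$ your local argument at an arbitrary vertex suffices (every vertex can be written as $1bcd$), whereas your side remark about the $7$-colouring only yields $\alpha_S\ge 5$, not the upper bound.
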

\begin{proof}
Without loss of generality, let $a=1$.  
Since $n=8$, for any vertex $X$, if $|A \cap X|=3$, then $|\overline{A} \cap X|=1$.  
Every vertex of $3$-$QI(8,2)$ contains $a=1$, so a vertex not belonging to any hyperedge containing $A$ must intersect $A=abcd$ in either one or three elements.  
Hence, there are $\binom{4}{3}+4\binom{3}{2}=16$ such vertices, listed in Table~\ref{tab:ind}.  

The first column contains vertices intersecting $A$ only in $\{a\}$, while the remaining columns list those intersecting $A$ in exactly three elements.  
Each column forms a strongly independent set since any two vertices within a column intersect in three elements.  
Together with $A$, these yield the Type~I and Type~III sets.  
Similarly, each row forms a strongly independent set since the vertices in a row intersect pairwise in one or three elements, giving the Type~II sets.  
Finally, no two vertices from different rows and columns can appear together in a strongly independent set since such pairs intersect in exactly two elements.  
Thus, the only possible maximum strongly independent subsets of $3$-$QI(8,2)$ containing $A$ are precisely those of Types~I, II, and III.
\end{proof}

\begin{corollary}\label{ind}
The strong independence number of $3$-$QI(8,2)$, $\alpha_S(3$-$QI(8,2))=5$.
\end{corollary}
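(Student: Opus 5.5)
We need $\alpha_S(3\text{-}QI(8,2))=5$, and we prove it as an upper bound plus a lower bound.

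\emph{Lower bound.} We exhibit a strongly independent set of size $5$, for instance the Type~I set $\{A,A_1,A_2,A_3,A_4\}$ from Theorem~\ref{types}. Any two of $A_1,\dots,A_4$ meet in three elements. Each $A_i$ meets $A$ only in $\{a\}$. So every pair meets in one or three elements. By Condition~1 (with $n=8$ and all vertices of size $4$, an intersection of size one or three forces some one of the four cells to have size at most one), no pair lies in a hyperedge. Alternatively, any colour class $\mathcal{C}_i$ from the proof of Theorem~\ref{j842} works. Either way, $\alpha_S\ge 5$.

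\emph{Upper bound: reduction to the table.} Let $S$ be any strongly independent set and fix $A\in S$; by vertex-transitivity (Proposition~\ref{vertextransitive}) we may write $A=abcd$ with $a=1$. Every other member of $S$ is a vertex not lying in any hyperedge with $A$. By Condition~1 these are exactly the $16$ vertices of Table~\ref{tab:ind}, those meeting $A$ in one or three elements.

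\emph{Upper bound: the grid argument.} Theorem~\ref{types} shows that two such vertices lying in different rows and different columns of the table meet in exactly two elements. Hence they lie together in a hyperedge. So $S\setminus\{A\}$ is a set of table cells with no two in distinct rows and distinct columns. Such a set lies entirely within one row or one column: if two cells share a row, a third cell outside that row would have to share a column with both, which is impossible. Rows and columns each have $4$ cells, so $|S\setminus\{A\}|\le 4$ and $|S|\le 5$.

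The main point to verify is the claim that cells in distinct rows and columns meet in exactly two elements. This is a short case check: pair $A_i$ with $B_{jk}$ for $k\ne i$, and pair $B_{jk}$ with $B_{j'k'}$ for $j\ne j'$ and $k\ne k'$. In each case the intersection is $\{a,x\}$ for a single shared element $x$, and Condition~1 then yields adjacency. Since Theorem~\ref{types} already asserts this, the corollary follows directly.
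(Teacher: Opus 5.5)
Your proof is correct and follows the paper's route: the corollary is read off from Theorem~\ref{types}, which uses the $16$ vertices of Table~\ref{tab:ind} that share no hyperedge with $A$, the strong independence of each row and each column, and the fact that cells in distinct rows and distinct columns meet in exactly two elements. Your explicit size-$5$ example and your short argument that a set of pairwise compatible cells must lie in a single row or column spell out steps the paper leaves implicit.
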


\section{The Core of $3$-$QI(8,2)$}
\label{sec:5}

In this section, we examine the structure of strongly independent sets arising from strong colourings and endomorphisms of $3$-$QI(8,2)$, which plays a crucial role in establishing that the hypergraph is a core. In particular, we show that in any $7$-strong colouring of $3$-$QI(8,2)$, for every choice of three colour classes, there exist three vertices that form a hyperedge. These colour classes may correspond to any of the Types~I, II, or III defined in Theorem~\ref{types}, with all of the distinct types or at least two of the same type. To facilitate this analysis, we first state a condition that characterizes adjacency in $3$-$QI(8,2)$. This condition serves as a key criterion for determining whether a given triple of vertices forms a hyperedge in the lemmas that follow.\\

\noindent\textbf{Condition 2}:  Let $A, B, C$ be vertices represented as $4$-subsets in $3\text{-}QI(8,2)$. Then they form a hyperedge if and only if $|A\cap B|=|A\cap C|=|B \cap C|=2$, and $|A\cap B\cap C|=1$. 

\begin{lemma}
\label{lemma1}
Consider a partition of the vertices of $3$-$QI(8,2)$ into classes of size five, each being a maximum strongly independent set. Among any three such classes, if two are of Type~I, then there exist three vertices, one from each class, that together form a hyperedge.
\end{lemma}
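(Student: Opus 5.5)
We will work with the representation used in the proof of Theorem~\ref{types}. Every vertex is $\{1\}\cup X$ with $X$ a $3$-subset of $\mathcal{P}=\{2,\ldots,8\}$. By Condition~2, three vertices $\{1\}\cup X,\{1\}\cup Y,\{1\}\cup Z$ form a hyperedge if and only if $X,Y,Z$ pairwise meet in exactly one point and $X\cap Y\cap Z=\emptyset$. Call such a triple a \emph{triangle}: three $3$-sets with three distinct pairwise intersection points.

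By Theorem~\ref{types}, a Type~I class is determined by a splitting $\mathcal{P}=T\sqcup U$ with $|T|=3$ and $|U|=4$. Its five members are $T$ and the four $3$-subsets $U\setminus\{u\}$ of $U$. The first step is to record, for an arbitrary $3$-set $X$, which members of such a class meet $X$ in exactly one point:
\begin{itemize}
\item $T$ qualifies iff $|X\cap T|=1$, which forces $|X\cap U|=2$;
\item $U\setminus\{u\}$ qualifies iff $|X\cap U|=2$ and $u\in X\cap U$.
\end{itemize}
So whenever $|X\cap T|=1$, there are three candidates in the class, with prescribed intersection points. When $|X\cap T|\in\{0,2\}$, there is exactly one candidate if $|X\cap U|=2$, and none otherwise. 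This bookkeeping lets us pick partners quickly.

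Let the two Type~I classes be $\mathcal{C}_1=(T_1,U_1)$ and $\mathcal{C}_2=(T_2,U_2)$, and let $\mathcal{C}_3$ be the third class. Since the classes are disjoint, we have $T_1\neq T_2$. We also have $T_1\not\subseteq U_2$, because otherwise $T_1$ would be a member of $\mathcal{C}_2$; symmetrically, $T_2\not\subseteq U_1$. Hence $|T_1\cap T_2|\in\{1,2\}$, and we split into these two cases.

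\textbf{Case $|T_1\cap T_2|=1$.} Say $T_1\cap T_2=\{p\}$. Then $T_1,T_2$ already form two sides of a potential triangle. We need a member $X$ of $\mathcal{C}_3$ with the following properties:
\begin{itemize}
\item $|X\cap T_1|=|X\cap T_2|=1$;
\item $p\notin X$.
\end{itemize}
Such an $X$ therefore takes one point of $T_1\setminus\{p\}$, one point of $T_2\setminus\{p\}$, and one point of the remaining two points of $\mathcal{P}$. This gives $2\cdot2\cdot2=8$ admissible sets. The five members of $\mathcal{C}_3$ are not in $\mathcal{C}_1\cup\mathcal{C}_2$, and the types in Theorem~\ref{types} constrain $\mathcal{C}_3$ heavily. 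We will check, separately for $\mathcal{C}_3$ of Type~I, II and III, that $\mathcal{C}_3$ contains one of these eight sets. If it does not, we instead replace $T_1$ (or $T_2$) by a $3$-subset $U_1\setminus\{u\}$, using the candidate table above to allow more triangles.

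\textbf{Case $|T_1\cap T_2|=2$.} Then $T_1\cup T_2$ has four points. Take $Y=T_1$ and let $Z$ be the member $U_2\setminus\{u\}$ of $\mathcal{C}_2$ chosen so that $|Z\cap T_1|=1$. This is possible because $|T_1\cap U_2|=1$, so we choose $u\in U_2\setminus T_1$ with $U_2\setminus\{u\}\ni$ the point of $T_1\cap U_2$. We then search $\mathcal{C}_3$ for a completing third side exactly as in the first case.

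The main obstacle is the case analysis over $\mathcal{C}_3$. I would first use the transitive action of $S_7$ on $\mathcal{P}$, which fixes $1$ and acts by automorphisms (Proposition~\ref{vertextransitive}), to normalize $T_1,U_1,T_2$; there are only two configurations up to symmetry. I would then use disjointness from $\mathcal{C}_1\cup\mathcal{C}_2$ to list the few possible shapes of $\mathcal{C}_3$ within each type. For each shape, I would exhibit an explicit triangle using the candidate table. If some shape resists, I would show it cannot occur, since a Type~II or Type~III class containing no admissible $X$ would have to reuse a member of $\mathcal{C}_1$ or $\mathcal{C}_2$.
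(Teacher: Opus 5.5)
\textbf{There is a genuine gap.} Normalizing the two Type~I classes first is a sound opening, and a different organization from the paper's, which fixes a vertex $A=abcd$ of the third class and enumerates the other two classes relative to $A$. But your proposal stops exactly where the content of the lemma begins, and some of the tools you set up for the remaining step are wrong.

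\emph{The candidate table.} It is incorrect when $|X\cap T|=2$. Then $|X\cap U|=1$, and $U\setminus\{u\}$ meets $X$ in exactly one point for each of the three $u\in U\setminus X$. So there are three candidates there, not none. (Your clause ``exactly one candidate if $|X\cap U|=2$'' is vacuous, since $|X\cap T|\in\{0,2\}$ forces $|X\cap U|\in\{3,1\}$.)

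\emph{The case $|T_1\cap T_2|=2$.} It cannot occur: then $|U_1\cap U_2|=3$, so $U_1\cap U_2$ is a member of both classes.

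\emph{The remaining case, which is the serious problem.} Write $T_1=\{p,a_1,a_2\}$, $T_2=\{p,b_1,b_2\}$ and $U_1\cap U_2=\{r_1,r_2\}$. Your primary search for one of the eight sets $\{a_i,b_j,r_k\}$ can fail, and the rescue in your last paragraph is false. Take the star of the pair $\{p,r_1\}$, i.e.\ the five $3$-sets containing $p$ and $r_1$. It is a maximum strongly independent set of Type~III. It is disjoint from $\mathcal{C}_1\cup\mathcal{C}_2$, because the only members of those classes containing $p$ are $T_1$ and $T_2$. And it contains none of the eight admissible sets. So ``a class with no admissible $X$ must reuse a member of $\mathcal{C}_1$ or $\mathcal{C}_2$'' cannot be the closing argument. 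As written, the proposal does not prove the lemma.

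\emph{The missing step.} It also removes any need for a type-by-type analysis of $\mathcal{C}_3$. Every $3$-set outside $\mathcal{C}_1\cup\mathcal{C}_2$ except $\{p,r_1,r_2\}$ lies in a triangle with one member of each class:
\begin{itemize}
\item $\{a_i,b_j,r_k\}$ with $T_1$ and $T_2$;
\item $\{p,a_i,b_j\}$ with $\{b_1,b_2,r_1\}\in\mathcal{C}_1$ and $\{a_1,a_2,r_1\}\in\mathcal{C}_2$;
\item $\{p,a_i,r_k\}$ with $\{b_1,r_1,r_2\}$ and $T_2$;
\item $\{p,b_j,r_k\}$ with $T_1$ and $\{a_1,r_1,r_2\}$;
\item $\{a_1,a_2,b_j\}$ with $\{b_1,b_2,r_1\}$ and $\{a_1,r_1,r_2\}$;
\item $\{b_1,b_2,a_i\}$ with $\{b_1,r_1,r_2\}$ and $\{a_1,a_2,r_2\}$.
\end{itemize}
In each case the three pairwise intersection points are distinct. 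These families account for $24$ of the $25$ sets outside $\mathcal{C}_1\cup\mathcal{C}_2$. Since $\mathcal{C}_3$ has five members, it contains one of them, and the lemma follows.

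The fifth family is precisely where your table fails: $\{b_1,b_2,r_1\}$ is a partner although $|X\cap T_1|=2$. With this observation and the corrected table, your route gives a complete proof. It is considerably shorter than the paper's multi-table case analysis, and it uses only that the three classes are pairwise disjoint, not that they belong to a full partition.
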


\begin{proof}
Let $\mathcal{C}_1, \mathcal{C}_2,$ and $\mathcal{C}_3$ be three classes, with $\mathcal{C}_2$ and $\mathcal{C}_3$ of Type~I. Since $\mathcal{C}_i$'s are maximum strongly independent sets, for any vertex $A\in\mathcal{C}_1$, there are vertices in $\mathcal{C}_2$ and $\mathcal{C}_3$ that intersect $A$ in exactly two elements. Without loss of generality, let $A=abcd \in \mathcal{C}_1$ and $abef \in \mathcal{C}_2$ intersecting $A$ in exactly two elements.  \\

The first row of Table~\ref{tab:TypeI} represents a Type~I class $\mathcal{C}_2$ containing the vertex $abef$.  
Each of the remaining rows corresponds to a possible form of Type~I class $\mathcal{C}_3$, depending on whether its first vertex intersects $abef$ in one, two, or three elements.  
The first entries in rows 2–6 intersect $A$ in exactly two elements.  
For reference:  
$acgh$ intersects $abef$ in one element ($a$);  
$aceg$ in two elements ($ae$ that is distinct from the intersection $ac$ with $A$);  
$abgh$ in the same two elements as with $A$ ($ab$);  
$abeg$ in three (two elements $ab$ shared with $A$);  
and $acef$ in three (with one shared element $a$ with $A$).

\begin{table}[htbp]
\centering
\caption{Type I classes $\mathcal{C}_2$ and $\mathcal{C}_3$}
\renewcommand{\arraystretch}{1}
\begin{tabular}{|c|c|c|c|c|c|}
\hline
$\mathcal{C}_2$&$abef$ & $acdg$ & $acdh$ & $adgh$ & $acgh$ \\\hline
\multirow{5}{*}{$\mathcal{C}_3$}
&$acgh$ & $abde$ & $abdf$ & $abef$ & $adef$ \\\hhline{~-----}
&$aceg$ & $abdf$ & $abdh$ & $abfh$ & $adfh$ \\\hhline{~-----}
&$abgh$ & $acde$ & $acdf$ & $adef$ & $acef$ \\\hhline{~-----}
&$abeg$ & $acdf$ & $acdh$ & $adfh$ & $acfh$ \\\hhline{~-----}
&$acef$ & $abdg$ & $abdh$ & $adgh$ & $abgh$ \\\hline
\end{tabular}
\label{tab:TypeI}
\end{table}

None of the rows~2, 5, or 6 of Table~\ref{tab:TypeI} can serve as $\mathcal{C}_3$, since each shares a vertex with the class $\mathcal{C}_2$ shown in row~1 (namely $abef$, $acdh$, and $adgh$, respectively). Hence, they cannot occur in a partition of the vertex set containing the class $\mathcal{C}_2$. If $\mathcal{C}_3$ is as in row~3, vertices $A=abcd$, $abef$, and $aceg$ form a hyperedge.  
If $\mathcal{C}_3$ corresponds to row~4, the structure of $\mathcal{C}_1$ determines the resulting hyperedge as follows:

\begin{enumerate}
    \item \textbf{$\mathcal{C}_1$ of Type~I:} By Theorem~\ref{types}, $\mathcal{C}_1$ must be the following maximum strongly independent set.
    \begin{center}
    \begin{tabular}{|c|c|c|c|c|}
    \hline
    $abcd$ & $aefg$ & $aefh$ & $afgh$ & $aegh$ \\
    \hline
    \end{tabular}
    \end{center}
    In this case, the vertices $aefg$, $acgh$, and $acde$ form a hyperedge.

    \item \textbf{$\mathcal{C}_1$ of Type~II:} By Theorem~\ref{types}, $\mathcal{C}_1$ has the structure: 
    \begin{center}
    \begin{tabular}{|c|c|c|c|c|}
    \hline
    $abcd$ & $axyz$ & $abcw$ & $abdw$ & $acdw$ \\
    & where $x,y,z\notin\{b,c,d\}$ and $w\notin\{b,c,d,x,y,z\}$ & & & \\
    \hline
    \end{tabular}
    \end{center}
    If $xyz = efg$ or $efh$, then $aefg$ or $aefh$, respectively, forms a hyperedge with $acgh$ and $acde$.  
    If $xyz = egh$ or $fgh$, then $aegh$ or $afgh$ forms a hyperedge with $acdg$ and $acef$.

    \item \textbf{$\mathcal{C}_1$ of Type~III:} By Theorem~\ref{types}, $\mathcal{C}_1$ has the structure: 
    \begin{center}
    \begin{tabular}{|c|c|c|c|c|}
    \hline
    $abcd$ & $axye$ & $axyf$ & $axyg$ & $axyh$ \\
    & where $x,y \in \{b,c,d\}$ and $x \neq y$ & & & \\
    \hline
    \end{tabular}
    \end{center}
    If $xy = bc$ or $bd$, then $axye$, $acdg$, and $abgh$ form a hyperedge.  
    If $xy = cd$, then $acde$ repeats, contradicting that $\mathcal{C}_1$ and $\mathcal{C}_3$ are disjoint.
\end{enumerate}
\noindent Hence, in every valid configuration where two classes are of Type~I, a hyperedge exists among the three selected classes.
\end{proof}

\begin{lemma}
\label{lem2}
Consider a partition of the vertices of $3$-$QI(8,2)$ into classes of size five, each being a maximum strongly independent set. Among any three such classes, if two are of Type~II, then there exist three vertices, one from each class, that together form a hyperedge.
\end{lemma}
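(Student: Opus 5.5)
I will follow the same template as the proof of Lemma~\ref{lemma1}: fix a vertex in the non-Type~II class, normalise the labelling, enumerate the few possible shapes of the two Type~II classes, and for each surviving configuration exhibit an explicit hyperedge, checked by Condition~2. Let $\mathcal{C}_1,\mathcal{C}_2,\mathcal{C}_3$ be the three classes with $\mathcal{C}_2,\mathcal{C}_3$ of Type~II.

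\textbf{Normalisation.} By Theorem~\ref{types}, a Type~II class centred at a vertex $X=axyz$ has the form $\{X,\ a\,u\,v\,w,\ axyw',\ axzw',\ ayzw'\}$. Here $uvw$ is a $3$-subset of the complement of $X$ and $w'$ is the remaining element of that complement. Equivalently, it consists of two complementary ``co-lines'' plus the three vertices obtained from $X$ by swapping one element for $w'$. Since $\mathcal{C}_2$ is a maximum strongly independent set and the classes partition the vertex set, $\mathcal{C}_2$ contains a vertex meeting $A=abcd\in\mathcal{C}_1$ in exactly two elements. Using the $S_8$-symmetry fixing $a$ (Proposition~\ref{vertextransitive}), I may assume $abef\in\mathcal{C}_2$. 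Then $\mathcal{C}_2$ is one of the finitely many Type~II classes containing $abef$. Up to the stabiliser of $\{abcd,abef\}$, this is a short list, determined by whether $abef$ plays the role of the centre, the disjoint partner $auvw$, or one of the swapped vertices.

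\textbf{Main case analysis.} For each choice of $\mathcal{C}_2$, I will list the Type~II classes $\mathcal{C}_3$ that are disjoint from $\mathcal{C}_2$. I will organise them by the intersection size of a chosen vertex of $\mathcal{C}_3$ with $abef$, as in Table~\ref{tab:TypeI}. Often the hyperedge is immediate: $\mathcal{C}_3$ contains a vertex $Y$ with $|Y\cap abcd|=|Y\cap abef|=2$ and $abcd\cap abef\cap Y=\{a\}$, e.g.\ $Y=aceg$. When no such $Y$ exists, I will instead use a different vertex of $\mathcal{C}_1$. Here I split on the type of $\mathcal{C}_1$ (I, II, III), parametrised as in the proof of Lemma~\ref{lemma1} by free letters $x,y,z,w$. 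For each parameter value, I will find a triple with one vertex in each class satisfying Condition~2, or show that the classes overlap, contradicting the partition.

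\textbf{Expected obstacle.} The main difficulty is bookkeeping. Type~II classes have more internal freedom than Type~I (the choice of centre and of $w'$), so the number of $(\mathcal{C}_2,\mathcal{C}_3)$ pairs is larger. To keep it manageable, I will use the additional symmetry swapping $b\leftrightarrow a$-free letters within $\{c,d\}$ and within $\{e,f\}$, and the symmetry $\{c,d\}\leftrightarrow\{e,f\}$ that fixes $ab$. This should reduce the work to a table of roughly the size of Table~\ref{tab:TypeI}. The subcase I expect to be hardest is when $\mathcal{C}_1$ is also of Type~II and shares structure with $\mathcal{C}_2$ or $\mathcal{C}_3$. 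There I would first try the swapped vertices $abcw$, $abdw$, $acdw$ of $\mathcal{C}_1$ as the hyperedge's first element, since they meet many candidates in exactly two points.
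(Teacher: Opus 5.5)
Your plan follows the paper's own strategy: fix $A=abcd\in\mathcal{C}_1$, use $\alpha_S=5$ to get a vertex of $\mathcal{C}_2$ meeting $A$ in two points and normalise it to $abef$, sort $\mathcal{C}_3$ by how its vertex meeting $A$ in two points sits relative to $abef$, and split on the type of $\mathcal{C}_1$ when nothing through $A$ works. The gap is that the proposal stops where the proof begins. The lemma has no content beyond this finite verification, and ``for each parameter value, I will find a triple \dots\ or show that the classes overlap'' just restates the lemma. The hard cases do occur. For example, $\mathcal{C}_2=\{abef,adgh,abce,abcf,acef\}$ and $\mathcal{C}_3=\{acgh,adef,abcg,abch,abgh\}$ are disjoint Type~II classes, yet no $X\in\mathcal{C}_2$, $Y\in\mathcal{C}_3$ make $\{abcd,X,Y\}$ a hyperedge, even with $X\neq abef$. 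For every admissible $\mathcal{C}_1$ you must then produce either a shared vertex or an explicit hyperedge; for instance, $aefg,abce,acgh$ works when $\mathcal{C}_1=\{abcd,aefg,aefh,afgh,aegh\}$. The proposal carries out none of this. The reduction meant to keep the tables small is also partly invalid. The exchange $\{c,d\}\leftrightarrow\{e,f\}$ maps $abcd$ to $abef$, so it swaps the roles of $\mathcal{C}_1$ (arbitrary type) and $\mathcal{C}_2$ (Type~II). It is therefore not a symmetry of your normalised configuration and cannot be used to prune cases.

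The verification can in fact be closed with little bookkeeping. Regard vertices as balanced bipartitions of $[8]$. For a $3$-set $S\subseteq[8]$, let $\mathcal{I}_S$ be the five bipartitions having $S$ inside one block. By Theorem~\ref{types}, the maximum strongly independent sets are exactly the $56$ sets $\mathcal{I}_S$: Type~III when $a\in S$, Types~I and~II when $a\notin S$. For example, the Type~II class $\{abcd,afgh,acde,abde,abce\}$ is $\mathcal{I}_{fgh}$. Moreover, $\mathcal{I}_S\cap\mathcal{I}_{S'}=\emptyset$ if and only if $|S\cap S'|=1$. A short count then shows that the seven $3$-sets underlying a partition are the lines of a Fano plane on seven of the eight points. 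So any three classes correspond either to three concurrent lines or to a triangle, and $S_8$ (Proposition~\ref{vertextransitive}) acts transitively on each kind of configuration. One hyperedge of each kind therefore suffices, checked by Condition~2:
\begin{itemize}
\item for the triangle of lines $123,145,246$, take the blocks $1237,1457,2467$;
\item for the concurrent lines $123,145,167$, take the blocks $1234,1456,1267$.
\end{itemize}
This proves Theorem~\ref{main}, and with it the lemma, without any case tables.
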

\begin{proof}
Let $A=abcd$ be a vertex in a maximum strongly independent set $\mathcal{C}_1$, and let $\mathcal{C}_2$ and $\mathcal{C}_3$ be maximum strongly independent sets of Type~II. The maximality of $\mathcal{C}_2$ and $\mathcal{C}_3$ implies that they contain at least one vertex that intersects $A$ in exactly two elements. Without loss of generality, let $abef \in \mathcal{C}_2$ be one such vertex. The first row of Table~\ref{tab:TypeII} represents a Type~II class $\mathcal{C}_2$ containing the vertex $abef$, where $w,x,y,$ and $z$ are distinct elements from the set $\{c,d,g, h\}$. Each of the remaining rows corresponds to a possible form of Type~II class $\mathcal{C}_3$, depending on whether its first vertex intersects $abef$ in one, two, or three elements. Similar to Lemma~\ref{lemma1}, the first entries in rows 2–6 intersect $A$ in exactly two elements. \\
\begin{table}[h!]
\centering
\caption{Type II classes $\mathcal{C}_2$ and $\mathcal{C}_3$}
    \begin{tabular}{|c|c|c|c|c|c|}	
    \hline
    $\mathcal{C}_2$&$abef$&$axyz$&$abew$&$abfw$&$aefw$\\\hline
     \multirow{5}{*}{$\mathcal{C}_3$}&$acgh$&$ax_1y_1z_1$&$acgw_1$&$achw_1$&$aghw_1$\\\hhline{~-----}
     &$aceg$&$ax_2y_2z_2$&$acew_2$&$acgw_2$&$aegw_2$\\\hhline{~-----}
     &$abgh$&$ax_3y_3z_3$&$abgw_3$&$abhw_3$&$aghw_3$\\\hhline{~-----}
     &$abeg$&$ax_4y_4z_4$&$abew_4$&$abgw_4$&$aegw_4$\\\hhline{~-----}
     &$acef$&$ax_5y_5z_5$&$acew_5$&$acfw_5$&$aefw_5$\\\hline
     \end{tabular}
     \label{tab:TypeII}
     \end{table}
        
Suppose $\mathcal{C}_3$ corresponds to row~2 of Table~\ref{tab:TypeII}, where $w_1,x_1,y_1,z_1$ are distinct elements of $\{b,d,e,f\}$.  
The array below shows that, for any choice of $w,x,y,z$ and $w_1,x_1,y_1,z_1$, either two of the classes fail to be disjoint or there exists a hyperedge containing one vertex from each of $\mathcal{C}_1,\mathcal{C}_2,$ and $\mathcal{C}_3$.  
\begin{center}
\begin{tabular}{|c|c|c|c|c|c|}
\hline
    $xyz$ & $w$ & $x_1y_1z_1$& $w_1$& Hyperedge with $A=abcd$& $\mathcal{C}_2 \cap \mathcal{C}_3$\\
    \hline
    &&$bef$&&&$abef$\\\hline
    $cdg$ &$h$& $bde$&$f$&$abef, acfg$&\\\hline
    $cdg$ &$h$& $bdf$&$e$& $abef, aceg$&\\\hline
    $cdg$ &$h$& $def$&$b$& $abeh, acgh$&\\\hline
    $cdh$ & $g$ &$bde$&$f$& $abef, acfg$&\\\hline
    $cdh$ & $g$ &$bdf$&$e$& $abef, aceg$&\\\hline
    $cdh$ & $g$ &$def$&$b$& $abeg, acgh$&\\\hline
    $cgh$& $d$& &&&$acgh$\\\hline
    $dgh$& $c$& $bde$&$f$&$abef, acfg$&\\\hline
    $dgh$& $c$& $bdf$&$e$&$abef, aceg$&\\\hline
    $dgh$& $c$& $def$&$b$&(separately discussed in Case 1)&\\\hline
\end{tabular}
\end{center}
\begin{enumerate}
\item[Case 1.]                
If $xyz=dgh$ and $x_1y_1z_1=def$,  the structure of $\mathcal{C}_1$ determines the resulting hyperedge as follows:
    \begin{enumerate}
    \item \textbf{$\mathcal{C}_1$ of Type~I:} By Theorem~\ref{types}, $\mathcal{C}_1$ has the following structure. \\
        \begin{center}
        \begin{tabular}{|c|c|c|c|c|}
        \hline
	$abcd$& $aefg$& $aefh$& $afgh$& $aegh$\\
        \hline
        \end{tabular}
        \end{center}
        
    In this case, $aefg\in \mathcal{C}_1$, $abce \in \mathcal{C}_2$, and $acgh\in \mathcal{C}_3$ form a hyperedge.
    \item \textbf{$\mathcal{C}_1$ of Type~II:} By Theorem~\ref{types}, $\mathcal{C}_1$ has the structure: \\
	\begin{center}                             
        \begin{tabular}{|c|c|c|c|c|}
        \hline
	$abcd$& $axyz$,& $abcw$,& $abdw$& $acdw$\\
              &where $x,y,z\notin\{b,c,d\}$&where $w\notin \{b,c,d,x,y,z\}$&&\\
        \hline
        \end{tabular}
	\end{center}  
    \smallskip
    Then, the array below shows that, for any choices of $w,x,y,z$ either two of the classes fail to be disjoint or there exists a hyperedge containing one vertex from each of the three classes.\\
        \begin{center}
        \begin{tabular}{|c|c|c|}
        \hline
        $xyz$  & Hyperedge & $\mathcal{C}_1 \cap \mathcal{C}_2$ or $\mathcal{C}_1 \cap \mathcal{C}_3$ \\
        \hline
        $efg$  & $aefg, abce, acgh$&\\\hline
        $efh$&$aefh, abce, acgh$&\\\hline
        $egh$&&$abcf$\\\hline
        $fgh$&&$abce$\\\hline
        \end{tabular}
        \end{center}
        \smallskip
    \item \textbf{$\mathcal{C}_1$ of Type III:}  By Theorem~\ref{types}, $\mathcal{C}_1$ has the following structure. \\
	\begin{center}
        \begin{tabular}{|c|c|c|c|c|}
        \hline
	$abcd$& $axye$,& $axyf$& $axyg$& $axyh$\\
              &where $x,y \in \{b, c, d\}=b$ and $x \neq y$&&&\\
        \hline
        \end{tabular}
        \end{center}
        \smallskip
    If $xy=bc$, then $abce \in \mathcal{C}_1\cap \mathcal{C}_2$. If $xy=bd$ or $cd$, then $axye \in \mathcal{C}_1$, $adgh \in \mathcal{C}_2$ and $abcg \in \mathcal{C}_3$ form a hyperedge. 
    \end{enumerate}
\end{enumerate}
Thus, in every admissible choice, the conclusion of the lemma holds when $\mathcal{C}_3$ is given by row~2 of Table~\ref{tab:TypeII}. Similarly, if $\mathcal{C}_3$ corresponds to row~3 of Table~\ref{tab:TypeII}, then $abef$ and $aceg$ form a hyperedge with $A$. So, next, suppose $\mathcal{C}_3$ corresponds to row~4 of Table~\ref{tab:TypeII}, where $w_3,x_3,y_3,$ and $z_3$ are distinct elements from the set $\{c,d,e, f\}$. The array below shows that, for any choice of $w,x,y,z$ and $w_3,x_3,y_3,z_3$, either two of the classes share a common vertex or there exists a hyperedge containing one vertex from each of $\mathcal{C}_1,\mathcal{C}_2,$ and $\mathcal{C}_3$. 
\begin{center}
\begin{tabular}{|c|c|c|c|c|c|}
    \hline
    $xyz$ & $w$ & $x_3y_3z_3$& $w_3$&Hyperedge with $A=abcd$& $\mathcal{C}_2 \cap \mathcal{C}_3$\\\hline
    $cdg$ &$h$& $cde$&$f$& &$abfh$\\\hline
    $cdg$ &$h$& $cdf$&$e$& &$abeh$\\\hline
    $cdg$ &$h$& $cef$&$d$& $abeh, acef$&\\\hline
    $cdg$ &$h$& $def$&$c$& $abeh, adef$&\\\hline
    $cdh$ & $g$ &$cde$&$f$& &$abfg$\\\hline
    $cdh$ & $g$ &$cdf$&$e$& &$abeg$\\\hline
    $cdh$ & $g$ &$cef$&$d$& $abeg, acef$&\\\hline
    $cdh$ & $g$ &$def$&$c$& $abeg, adef$&\\\hline
    $cgh$& $d$& $cde$&$f$&$acgh, abfh$&\\\hline
    $cgh$& $d$& $cdf$&$e$&$acgh, abeh$&\\\hline
    $cgh$& $d$& $cef$&$d$&(separately discussed in Case 2)&\\\hline
    $cgh$& $d$& $def$&$c$&&$acgh$\\\hline
    $dgh$& $c$& $cde$&$f$&$adgh, abfh$&\\\hline
    $dgh$& $c$& $cdf$&$e$&$adgh, abeh$&\\\hline
    $dgh$& $c$& $cef$&$d$&&$adgh$\\\hline
    $dgh$& $c$& $def$&$c$&(separately discussed in Case 3)&\\\hline
\end{tabular}
\end{center}
                            
\begin{enumerate}
\item[Case 2.] If $xyz=cgh$ and $x_3y_3z_3=cef$, then the structure of $\mathcal{C}_1$ given by Theorem~\ref{types} leads to the following conclusions.
    \begin{enumerate}
    \item \textbf{$\mathcal{C}_1$ of Type I:} In this case, $\mathcal{C}_1$ is				\begin{center} 
        \begin{tabular}{|c|c|c|c|c|}
        \hline
	$abcd$& $aefg$&$aefh$&$afgh$&$aegh$\\\hline
        \end{tabular}
        \end{center}
        \smallskip
    which results in a hyperedge formed by $aefg$, $abde$, and $adgh$.
    \item \textbf{$\mathcal{C}_1$ of Type II:}
    In this case, $\mathcal{C}_1$ has the structure:
	\begin{center}
        \begin{tabular}{|c|c|c|c|c|}
        \hline
	$abcd$& $axyz$,& $abcw$,& $abdw$& $acdw$\\
        &where  $x,y,z\notin \{b,c,d\}$&where $w\notin \{b,c,d,x,y,z\}$&&\\\hline
        \end{tabular}
        \end{center}
        \smallskip
    Then, the array below shows that, for any choices of $w, x,y,z$, either two of the classes share a common vertex or there exists a hyperedge containing one vertex from each of the classes.\smallskip
    
        \begin{center}
        \begin{tabular}{|c|c|c|}
        \hline
        $xyz$ & Hyperedge & $\mathcal{C}_1 \cap \mathcal{C}_2$ or $\mathcal{C}_1 \cap \mathcal{C}_3$ \\\hline
        $efg$& $aefg, abde, adgh$&\\\hline
        $efh$& $aefh, abde, adgh $&\\\hline
        $egh$&&$abdf$\\\hline
        $fgh$&& $abde$\\\hline
        \end{tabular}
        \end{center}
        \smallskip
    \item \textbf{$\mathcal{C}_1$ of Type III:} Here $\mathcal{C}_1$ has the following structure. 
	\begin{center}
        \begin{tabular}{|c|c|c|c|c|}
        \hline
	$abcd$& $axye$, &$axyf$&$axyg$&$axyh$\\
        &where $x,y\in \{b, c, d\}$ and $x \neq y$&&&\\
        \hline
        \end{tabular}
	\end{center}
    \smallskip
    If $xy=bd$, then $abde \in \mathcal{C}_1\cap \mathcal{C}_2$. If $xy=bc$ or $cd$, then $axye$, $acgh$, and $abdg$ form a hyperedge. \\
    \end{enumerate}
\item[Case 3.] 
If $xyz=dgh$ and $x_3y_3z_3=def$, then again the structure of $\mathcal{C}_1$ given by Theorem~\ref{types} leads to the following conclusions.
    \begin{enumerate}
    \item \textbf{$\mathcal{C}_1$ of Type I:} Here $\mathcal{C}_1$ is
        \begin{center} 
        \begin{tabular}{|c|c|c|c|c|}
        \hline
	$abcd$& $aefg$& $aefh$& $afgh$& $aegh$\\\hline
        \end{tabular}
        \end{center}
        \smallskip
    which results in a hyperedge formed by $aefg$, $abce$, and $abgh$.
    \item \textbf{$\mathcal{C}_1$ of Type II:} In this case, $\mathcal{C}_1$  has the structure:
	\begin{center}
        \begin{tabular}{|c|c|c|c|c|}
        \hline
	$abcd$& $axyz$,& $abcw$,& $abdw$& $acdw$\\
            &where  $x,y,z\notin\{b,c,d\}$&where $w\notin \{b,c,d,x,y,z\}$&&\\\hline
        \end{tabular}
	\end{center}
    \smallskip
    Then, the array below shows that, for any choices of $w, x,y,z$, either two of the classes are not disjoint or there exists a hyperedge containing one vertex from each of the classes.
        \begin{center}
        \begin{tabular}{|c|c|c|}
        \hline
        $xyz$ & Hyperedge & $\mathcal{C}_1 \cap \mathcal{C}_2$ or $\mathcal{C}_1 \cap \mathcal{C}_3$ \\\hline
        $efg$& $aefg, abce, acgh$&\\\hline
        $efh$& $aefh, abce, acgh $&\\\hline
        $egh$&&$abcf$\\\hline
        $fgh$&& $abce$\\\hline
        \end{tabular}
        \end{center}
        
        \smallskip
    \item \textbf{$\mathcal{C}_1$ of Type III:} In this case, $\mathcal{C}_1$  has the structure:
	\begin{center}
        \begin{tabular}{|c|c|c|c|c|}
        \hline
	$abcd$& $axye$,&$axyf$&$axyg$&$axyh$\\
            &where $x, y\in\{b, c, d\}$ and $x \neq y$&&&\\\hline
        \end{tabular}
	\end{center}
    \smallskip
    If $xy=bc$, then $abce\in \mathcal{C}_1\cap \mathcal{C}_2$. If $xy=bd$ or $cd$, then $axye$, $adgh$ and $abcg$ form a hyperedge. 
    \end{enumerate}
\end{enumerate}

\noindent Next, suppose $\mathcal{C}_3$ corresponds to row~5 of Table~\ref{tab:TypeII}, where $w_4,x_4,y_4,z_4$ are distinct elements from the set $\{c, d, f, h\}$. The array below shows that, for any choice of $w, x, y, z$ and $w_4, x_4, y_4, z_4$, either two of the classes fail to be disjoint or there exists a hyperedge containing one vertex from each of the classes. 
\begin{center} 
\begin{tabular}{|c|c|c|c|c|c|}
\hline
    $xyz$ & $w$ & $x_4y_4z_4$& $w_4$&Hyperedge with $A=abcd$& $\mathcal{C}_2 \cap \mathcal{C}_3$\\\hline
    $cdg$ &$h$& $cdf$&$h$& &$abeh$\\\hline
    $cdg$ &$h$& $cdh$&$f$& &$abef$\\\hline
    $cdg$ &$h$& $cfh$&$d$& $abef, acfh$&\\\hline
    $cdg$ &$h$& $dfh$&$c$& $abef, adfh$&\\\hline
    $cdh$ & $g$ &&& &$abeg$\\\hline
    $cgh$& $d$& &&$acgh, abeg$&\\\hline
    $dgh$& $c$& &&$adgh, abeg$&\\\hline
\end{tabular}
\end{center}
						
\noindent Finally, suppose $\mathcal{C}_3$ corresponds to row~6 of Table~\ref{tab:TypeII}, where $w_5, x_5, y_5, z_5$ are distinct elements from the set $\{b, d, g, h\}$. The following array shows that conclusion of lemma holds in all the cases. 
\begin{center}
\begin{tabular}{|c|c|c|c|c|c|}
\hline
    $xyz$ & $w$ & $x_5y_5z_5$& $w_5$&Hyperedge with $A=abcd$& $\mathcal{C}_2 \cap \mathcal{C}_3$\\\hline
    $cdg$ &$h$& $bdg$&$h$& &$aefh$\\\hline
    $cdg$ &$h$& $bdh$&$g$& $abef, aceg$&\\\hline
    $cdg$ &$h$& $bgh$&$d$& $abeh, adef$&\\\hline
    $cdg$ &$h$& $dgh$&$b$& &$abef$\\\hline
    $cdh$ & $g$ &$bdg$&$h$& $abeg, aceh$&\\\hline
    $cdh$ & $g$ &$bdh$&$g$& &$aefg$\\\hline
    $cdh$ & $g$ &$bgh$&$d$& $abeg, adef$&\\\hline
    $cdh$ & $g$ &$dgh$&$b$& &$abef$\\\hline
    $cgh$& $d$& $bdg$&$h$&$abef, aceh$&\\\hline
    $cgh$& $d$& $bdh$&$g$&$abef, aceg$&\\\hline
    $cgh$& $d$& $bgh$&$d$&&$adef$\\\hline
    $cgh$& $d$& $dgh$&$b$&&$abef$\\\hline
    $dgh$& $c$& &&&$acef$\\\hline
\end{tabular}
\end{center}
\end{proof}      

													
\begin{lemma}
\label{lem3}
Consider a partition of the vertices of $3$-$QI(8,2)$ into classes of size five, each being a maximum strongly independent set. Among any three such classes, if two are of Type~III, then there exist three vertices, one from each class, that together form a hyperedge.
\end{lemma}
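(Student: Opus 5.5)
I will follow the template of Lemmas~\ref{lemma1} and~\ref{lem2}. Let $\mathcal{C}_1,\mathcal{C}_2,\mathcal{C}_3$ be three classes of the partition with $\mathcal{C}_2,\mathcal{C}_3$ of Type~III, and fix $A=abcd\in\mathcal{C}_1$. The first step is to recast a Type~III class intrinsically. By Theorem~\ref{types} and Table~\ref{tab:ind}, a Type~III class containing some vertex $X$ has the form $\{apqr, apqs_1, apqs_2, apqs_3, apqs_4\}$, where $\{a,p,q\}$ is a fixed $3$-set containing $a=1$ and the $s_i$ run over the complement of $\{a,p,q,r\}$. Equivalently, it consists of all five $4$-subsets containing a fixed $3$-set $\{a,p,q\}$. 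So each Type~III class is determined by a \emph{pair} $\{p,q\}\subseteq[8]\setminus\{a\}$. Two Type~III classes are disjoint if and only if their pairs are distinct, since a $4$-set containing both pairs would contain $\{a\}\cup\{p,q\}\cup\{p',q'\}$, which is impossible when that union has at least $4$ elements other than the case of a shared element, where the union $\{a,p,q,q'\}$ is one common vertex. Hence disjoint Type~III classes have disjoint pairs $\{p,q\}$ and $\{p',q'\}$.

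Since $\mathcal{C}_2,\mathcal{C}_3$ are maximum strongly independent sets, they are not independent of $A$. Up to relabelling with the symmetry of Proposition~\ref{vertextransitive} fixing $A$, the position of the pairs $P_2,P_3$ relative to $\{b,c,d\}$ and $\{e,f,g,h\}$ falls into a few orbits:
\begin{itemize}
\item[(i)] both pairs inside $\{e,f,g,h\}$, say $P_2=ef$ and $P_3=gh$;
\item[(ii)] $P_2=be$ with $P_3=cf$, $P_3=cd$, or $P_3=fg$;
\item[(iii)] $P_2=bc$ with $P_3=ef$ (or $P_3$ of the form $de$, which is covered by (ii) after swapping);
\item[(iv)] $P_2=bc$ and $P_3=d\,e$, etc.
\end{itemize}
I would enumerate these orbits in a table exactly as in Table~\ref{tab:TypeII}. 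For each orbit I first try to find a vertex $X\in\mathcal{C}_2$ and $Y\in\mathcal{C}_3$ with $\{A,X,Y\}$ a hyperedge via Condition~2. That requires $|A\cap X|=|A\cap Y|=|X\cap Y|=2$ and $A\cap X\cap Y=\{a\}$. Since $X\supseteq\{a\}\cup P_2$ has two free elements and $Y$ likewise, this is a small search. For example, in case (i) I can take $X=abef$ and $Y=acgh$.

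In the cases where no such $X,Y$ exist with $A$ (plausibly when $P_2$ or $P_3$ meets $\{b,c,d\}$ in a way forcing triple intersections, e.g.\ $P_2=bc$, $P_3=bd$ is excluded by disjointness but $P_2=bc$, $P_3=de$ may be tight), I replace $A$ by another vertex of $\mathcal{C}_1$. I then split on the type of $\mathcal{C}_1$ (I, II, or III), as in Cases 1--3 of Lemma~\ref{lem2}. For $\mathcal{C}_1$ of Type~III with pair $P_1$, the whole configuration is symmetric: three disjoint pairs, or pairs with specified overlaps. I will exhibit $X_i\supseteq\{a\}\cup P_i$ directly, e.g.\ for disjoint pairs $bc,de,fg$ take $abch$? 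No—I will choose $X_1=abcd$, $X_2=adef$... adjusted so that pairwise intersections are $2$ and the triple intersection is $\{a\}$. For Types~I and~II, I use the explicit structures recorded in Lemma~\ref{lemma1}, checking for each subcase either a shared vertex (which contradicts the partition) or a hyperedge.

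The main obstacle is the bookkeeping in the residual cases where $A$ itself admits no partner pair. There, one must track all choices of the parameters $x,y,z,w$ for a Type~II $\mathcal{C}_1$ and confirm that every branch yields either a collision or a hyperedge. I expect the Type~III pair description to make this considerably shorter than in Lemma~\ref{lem2}. Since $P_2,P_3$ are disjoint, I can usually choose $X=\{a\}\cup P_2\cup\{u\}$ and $Y=\{a\}\cup P_3\cup\{v\}$ with $u\in P_3$ and $v\in P_2$ to force $|X\cap Y|=3$ or avoid it as needed, leaving only a handful of configurations to check by hand.
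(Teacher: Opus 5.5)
There is a genuine gap. Your reformulation of a Type~III class as the star $\{X : \{a\}\cup P\subseteq X\}$ of a pair $P\subseteq[8]\setminus\{a\}$ is correct. Reducing to orbits of the stabilizer $S_{\{b,c,d\}}\times S_{\{e,f,g,h\}}$ of $A$ is also a legitimate and potentially much shorter route than the paper's tables. However, your case analysis goes wrong exactly where the difficulty lies.

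In case (i), $P_2=ef$, $P_3=gh$, your triple $abcd, abef, acgh$ is not a hyperedge, since $abef\cap acgh=\{a\}$. In fact no hyperedge through $A$ exists in this configuration:
\begin{itemize}
\item a vertex of $\mathcal{C}_2$ meeting $A$ in two elements has the form $aefu$ with $u\in\{b,c,d\}$;
\item a vertex of $\mathcal{C}_3$ meeting $A$ in two elements has the form $aghv$ with $v\in\{b,c,d\}$;
\item $|aefu\cap aghv|=2$ forces $u=v$, and then the triple intersection is $\{a,u\}$.
\end{itemize}
So (i) is precisely the residual configuration; it is the paper's Case~1, which recurs as Case~2.

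Conversely, the cases you suspect to be tight cannot occur at all. A pair inside $\{b,c,d\}$, as in (iii), (iv), and $P_3=cd$ in (ii), gives a star containing $abcd=A\in\mathcal{C}_1$, contradicting the partition. Also, two stars are disjoint iff their pairs are \emph{disjoint}, not merely distinct as the first half of your sentence says. The admissible configurations form exactly three orbits: $\{ef,gh\}$, $\{be,cf\}$ and $\{be,fg\}$. The last two are settled by $abcd, abef, acfg$.

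The residual analysis, which is the real content of the lemma, is then not supplied. Your Type~III attempt is abandoned mid-sentence, Types~I and~II are only promised, and your suggested choice $u\in P_3$, $v\in P_2$ gives $|X\cap Y|=3$. Two observations are needed.
\begin{itemize}
\item If $\mathcal{C}_1$ is of Type~I or~II relative to $A$, it contains $\{a\}\cup S$ for some $3$-subset $S\subseteq\{e,f,g,h\}$. Every such $S$ contains $ef$ or $gh$, so $\mathcal{C}_1$ meets $\mathcal{C}_2\cup\mathcal{C}_3$, contradicting the partition.
\item If $\mathcal{C}_1$ is the star of $P_1\subseteq\{b,c,d\}$, the three pairs are pairwise disjoint. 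Take the cyclic choice $X_i=\{a\}\cup P_i\cup\{p_{i+1}\}$ with $p_{i+1}\in P_{i+1}$ (indices mod $3$). Then $X_i\cap X_{i+1}=\{a,p_{i+1}\}$ and the triple intersection is $\{a\}$, so this is a hyperedge by Condition~2. For $P_1=bc$ this gives $abce, aefg, abgh$.
\end{itemize}
With these two observations your orbit approach gives a complete proof, far shorter than the paper's enumeration. As written, it does not prove the lemma.
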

\begin{proof}
Let $A = abcd$ be a vertex in a maximum strongly independent set $\mathcal{C}_1$, and let $\mathcal{C}_2$ and $\mathcal{C}_3$ be Type~III maximum strongly independent sets. Since both $\mathcal{C}_2$ and $\mathcal{C}_3$ are maximal, each contains a vertex intersecting $A$ in exactly two elements. Without loss of generality, let $abef \in \mathcal{C}_2$ be such a vertex. The first row of Table~\ref{tab:TypeIII} represents a Type~III class $\mathcal{C}_2$ containing $abef$, where $x$ and $y$ are distinct elements from the set $\{b, e, f\}$. The remaining rows describe all possible forms of $\mathcal{C}_3$, determined by whether its first vertex intersects $abef$ in one, two, or three elements. As in Lemma~\ref{lemma1}, the first entries in rows~2–6 intersect $A$ in exactly two elements.

\begin{table}[htbp]
\centering
\caption{Type III classes $\mathcal{C}_2$ and $\mathcal{C}_3$}
\begin{tabular}{|c|c|c|c|c|c|}
\hline
$\mathcal{C}_2$& $abef$&$acxy$&$adxy$&$agxy$&$ahxy$\\\hline
\multirow{5}{*}{$\mathcal{C}_3$}&$acgh$&$abx_1y_1$&$adx_1y_1$&$aex_1y_1$&$afx_1y_1$\\\hhline{~-----}
&$aceg$&$abx_2y_2$&$adx_2y_2$&$afx_2y_2$&$ahx_2y_2$\\\hhline{~-----} 
&$abgh$&$acx_3y_3$&$adx_3y_3$&$aex_3y_3$&$afx_3y_3$\\\hhline{~-----}
& $abeg$&$acx_4y_4$&  $adx_4y_4$&  $afx_4y_4$& $ahx_4y_4$\\\hhline{~-----}
&$acef$&$abx_5y_5$&$adx_5y_5$&$agx_5y_5$&$ahx_5y_5$\\\hline
\end{tabular}
\label{tab:TypeIII}
\end{table}      
                        
If $\mathcal{C}_3$ corresponds to row~2 of Table~\ref{tab:TypeIII}, with $x_1,y_1 \in\{c, g, h\}$, then the array below shows that for any choices of $x, y$ and $x_1,y_1$, either the classes are not pairwise disjoint or there exists a hyperedge containing one vertex from each of $\mathcal{C}_1,\mathcal{C}_2,$ and $\mathcal{C}_3$.
					
\begin{center}
\begin{tabular}{|c|c|c|c|}
\hline
    $xy$ & $x_1y_1$&Hyperedge with $A=abcd$\\\hline
    $be$ &$cg$& $abef, aceg$\\\hline
    $be$ &$ch$& $abef, aceh$\\\hline
    $be$ &$gh$& $abeg, acgh$\\\hline
    $bf$ &$cg$& $abef, aceg$\\\hline
    $bf$ &$ch$& $abef, aceh$\\\hline
    $bf$ &$gh$& $abfg,acgh$\\\hline
    $ef$&$cg$&$abef,aceg$\\\hline
    $ef$&$ch$&$abef, aceh$\\\hline
    $ef$&$gh$&(separately discussed in Case 1)\\\hline
\end{tabular}
\end{center}
                        
\begin{enumerate}
\item[Case 1.] If $xy=ef$ and $x_1y_1=gh$, then the structure of $\mathcal{C}_1$ given by Theorem~\ref{types} leads to the following conclusions.
    \begin{enumerate}
    \item \textbf{$\mathcal{C}_1$ of Type~I:} In this case, $aefg$ appears in both $\mathcal{C}_1$ and $\mathcal{C}_2$.
    \item \textbf{$\mathcal{C}_1$ of Type~II:} Here, $\mathcal{C}_1$ has the structure:
        \begin{center}
	\begin{tabular}{|c|c|c|c|c|}
        \hline
	$abcd$&$ax'y'z'$,& $abcw'$,& $abdw'$&$acdw'$\\
            & where $x',y',z'\notin \{b,c,d\}$&where $w'\notin  \{b,c,d,x,y,z\}$&&\\
        \hline
        \end{tabular}
	\end{center}
    \smallskip
Then, for any choice of $w',x', y',z'$, two of the classes are not disjoint as shown in the following array. 
        \begin{center}
        \begin{tabular}{|c|c|}
        \hline
        $x'y'z'$  & $\mathcal{C}_1 \cap \mathcal{C}_2$ or $\mathcal{C}_1 \cap \mathcal{C}_3$ \\\hline
        $efg$  & $aefg$\\\hline
        $efh$&$aefh$\\\hline
        $egh$&$aegh$\\\hline
        $fgh$&$afgh$\\\hline
        \end{tabular}
        \end{center}
        \smallskip
    \item \textbf{$\mathcal{C}_1$ of Type III:} In this case, $\mathcal{C}_1$ has the structure:
	\begin{center}
        \begin{tabular}{|c|c|c|c|c|}
        \hline
	$abcd$&$ax'y'e,$& $ax'y'f$&$ax'y'g$&$ax'y'h$\\
              &$x', y'\in\{b, c, d\}$ and $x' \neq y'$&&&\\\hline
	\end{tabular} 
        \end{center}
        \smallskip
    If $x'y' = bc$ or $cd$, then the vertices $ax'y'e$, $aefg$, and $acgh$ form a hyperedge.  Whereas if $x'y' = bd$, then $ax'y'e$, $aefg$, and $adgh$ form a hyperedge. 
    \end{enumerate}
\end{enumerate}	
If $\mathcal{C}_3$ corresponds to row~3 of Table~\ref{tab:TypeIII}, then $abef$ and $aceg$ form a hyperedge with $A$. If $\mathcal{C}_3$ corresponds to row~4 of Table~\ref{tab:TypeIII}, where $x_3, y_3\in\{b, g, h\}$, then the array below shows that for any choice of $x, y$ and $x_3, y_3$ either two of the classes are not disjoint, or there exists a hyperedge containing one vertex from each class.\\
    \begin{center}
    \begin{tabular}{|c|c|c|c|}
    \hline
    $xy$ & $x_3y_3$&Hyperedge with $A=abcd$& $\mathcal{C}_2 \cap \mathcal{C}_3$\\
    \hline
    $be$ &$bg$& &$abeg$\\\hline
    $be$ &$bh$& &$abeh$\\\hline
    $be$ &$gh$& $abeg, acgh$&\\\hline
    $bf$ &$bg$& &$abfg$\\\hline
    $bf$ &$bh$& & $abfh$\\\hline
    $bf$ &$gh$& $abfg,acgh$&\\\hline
    $ef$&$bg$&$acef, abeg$&\\\hline
    $ef$&$bh$&$acef, abeh$&\\\hline
    $ef$&$gh$&(separately discussed in Case 2)&\\\hline
    \end{tabular}
    \end{center}
                            
\begin{enumerate}
\item[Case 2.]
If $xy=ef$ and $x_3y_3=gh$, then the structure of $\mathcal{C}_1$ given by Theorem~\ref{types} leads to the following conclusions.
    \begin{enumerate}
    \item \textbf{$\mathcal{C}_1$ of Type~I:} Here $aefg$ lies in both $\mathcal{C}_1$ and $\mathcal{C}_2$.
    \item \textbf{$\mathcal{C}_1$ of Type II:} In this case, $\mathcal{C}_1$ is 
        \begin{center}
        \begin{tabular}{|c|c|c|c|c|}
         \hline
          $abcd$& $ax'y'z'$,& $abcw'$,& $abdw'$& $acdw'$\\
           &where $x',y',z'\notin \{b,c,d\}$&where $w'\notin \{b,c,d,x,y,z\}$&&\\\hline
         \end{tabular}
         \end{center}
         \smallskip
    and the following array shows that for any choice of $w',x', y',z'$, the classes are not disjoint. 
        \begin{center}
        \begin{tabular}{|c|c|}
        \hline
        $x'y'z'$  & $\mathcal{C}_1 \cap C_2$ or $\mathcal{C}_1 \cap \mathcal{C}_3$ \\\hline
        $efg$  & $aefg$\\\hline
        $efh$&$aefh$\\\hline
        $egh$&$aegh$\\\hline
        $fgh$&$afgh$\\\hline
        \end{tabular}
        \end{center}
        \smallskip
    \item \textbf{$\mathcal{C}_1$ of Type III:} In this case, $\mathcal{C}_1$ is 
	\begin{center}
	\begin{tabular}{|c|c|c|c|c|}
        \hline
	$abcd$&$ax'y'e$,& $ax'y'f$&$ax'y'g$&$ax'y'h$\\
        &$x', y'\in\{b, c, d\}$ and $x' \neq y'$&&&\\\hline
        \end{tabular}
	\end{center}
    \smallskip
    If $x'y'=bc$ or $bd$, then $ax'y'e$, $aefg$ and $abgh$ form a hyperedge, whereas if $x'y'=cd$, then $ax'y'e$, $aefg$ and $acgh$ form a hyperedge.
    \end{enumerate}
\end{enumerate}
Next, suppose $\mathcal{C}_3$ corresponds to row~5 of Table~\ref{tab:TypeIII}, where $x_4, y_4\in\{b, e, g\}$, then the array below shows that for any choice of $x, y$ and $x_4, y_4$ either two of the classes are not disjoint, or there exists a hyperedge containing one vertex from each class.\\ 

\begin{center}
\begin{tabular}{|c|c|c|c|}
\hline
$xy$ & $x_4y_4$&Hyperedge with $A=abcd$& $\mathcal{C}_2 \cap \mathcal{C}_3$\\\hline
$be$ &$be$& &$abef$\\\hline
$be$ &$bg$& &$abeg$\\\hline
$be$ &$eg$& &$abeg$\\\hline
$bf$ &$be$& &$abef$\\\hline
$bf$ &$bg$& & $abfg$\\\hline
$bf$ &$eg$& $abef,aceg$&\\\hline
$ef$&$be$&&$abef$\\\hline
$ef$&$bg$&$acef, abeg$&\\\hline
$ef$&$eg$&&$aefg$\\\hline
\end{tabular}
\end{center}

Finally, suppose $\mathcal{C}_3$ corresponds to row~6 of Table~\ref{tab:TypeIII}, where $x_5, y_5\in\{c, e, f\}$, then the array below shows that the conclusion of the lemma holds whenever classes are disjoint.\\
  
\begin{center}
\begin{tabular}{|c|c|c|c|}
\hline
$xy$ & $x_5y_5$& Hyperedge with $A=abcd$& $\mathcal{C}_2 \cap \mathcal{C}_3$\\
\hline
$ef$&&&$acef$\\\hline
&$ef$&&$abef$\\\hline
$be$ &$ce$& &$abce$\\\hline
$be$ &$cf$& $abef, acfg$&\\\hline
$bf$ &$ce$& $abef, aceg$&\\\hline
$bf$ &$cf$& & $abcf$\\\hline
\end{tabular}
\end{center}
\end{proof}

\begin{lemma}
\label{lem4}
Consider a partition of the vertices of $3$-$QI(8,2)$ into classes of size five, each being a maximum strongly independent set. Among any three such classes, if all three are of different types, then there exist three vertices, one from each class, that together form a hyperedge.
\end{lemma}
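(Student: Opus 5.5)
We follow the template of Lemmas~\ref{lemma1}--\ref{lem3}, now with one class of each type. Let the three classes be $\mathcal{C}_1,\mathcal{C}_2,\mathcal{C}_3$ of Types~I, II and III. Using the symmetry of Proposition~\ref{vertextransitive}, we normalise the Type~I class. By Theorem~\ref{types}, a Type~I class is determined by any of its members $A=abcd$: it is $\{abcd, afgh, aegh, aefh, aefg\}$, which consists of $abcd$ together with the four $3$-subsets of $\{e,f,g,h\}$ each joined with $a$. So we fix $\mathcal{C}_1$ to be exactly this set, with $a=1$.

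Next we constrain $\mathcal{C}_2$ and $\mathcal{C}_3$. A Type~II class has the form $\{X, \ldots\}$ built around some $4$-set, and a Type~III class is of the form $\{apqr, apqs_1,\ldots\}$ (a pair $apq$ extended by each outside element). The strategy is to choose a vertex $P\in\mathcal{C}_2$ with $|P\cap A|=2$; it exists because $\mathcal{C}_2$ is maximum and hence cannot be contained in the $16$ vertices of Table~\ref{tab:ind}. Up to the stabiliser of $\mathcal{C}_1$ (permutations of $\{b,c,d\}$ and of $\{e,f,g,h\}$), we may take $P=abef$.

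We then write $\mathcal{C}_2$ as the Type~II class containing $abef$, exactly as in the first row of Table~\ref{tab:TypeII}, with parameters $w,x,y,z\in\{c,d,g,h\}$. For $\mathcal{C}_3$ we list the possible Type~III classes by the intersection pattern of a chosen vertex with $abef$ and $A$, as in Table~\ref{tab:TypeIII}.

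For each resulting parameter combination we do one of two things, recorded in an array like those above:
\begin{enumerate}
\item[(i)] exhibit a common vertex of two classes, which is excluded because the classes partition the vertex set; or
\item[(ii)] exhibit a triple, one vertex per class, satisfying Condition~2: pairwise intersections of size $2$ and a triple intersection of size $1$, namely $\{a\}$.
\end{enumerate}

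Since $\mathcal{C}_1$ is completely fixed, there is no further branching on the type of the third class, which makes this lemma easier than the previous ones. The natural first candidates for hyperedges are:
\begin{itemize}
\item $A$ paired with $abef$ and some $ace\ast$ or $acg\ast$ in $\mathcal{C}_3$;
\item a vertex $aefg$ or $aefh$ of $\mathcal{C}_1$ paired with a vertex $ab\ast\ast$ or $ac\ast\ast$ from the other two classes.
\end{itemize}
Triples of the form $\{aefg, abce, acgh\}$ worked repeatedly in the earlier lemmas.

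The main obstacle is bookkeeping. We must make sure that the enumeration of Type~III classes relative to the fixed $\mathcal{C}_1$ and $\mathcal{C}_2$ is exhaustive. Some Type~III classes contain no vertex meeting $abef$ in the prescribed way. These are handled by choosing the reference vertex from $\mathcal{C}_3$ instead, or by noting that a Type~III class $\{apqe,apqf,apqg,apqh\}\cup\{abcd\}$-shaped set centred elsewhere meets $\mathcal{C}_1$.

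We must also identify the few residual cases in which neither disjointness fails nor an obvious hyperedge through $A$ exists. We expect these to arise when $\mathcal{C}_2$'s triple $xyz$ avoids $\{c,d\}$ partially, as with $xyz=dgh$ or $cgh$ in Lemma~\ref{lem2}. For them we would first search for hyperedges using the vertices $aefg$ or $aefh$ of $\mathcal{C}_1$ rather than $A$.

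Finally, we check each claimed triple against Condition~2, which is mechanical.
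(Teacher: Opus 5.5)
Your proposal stops where the proof begins. This lemma is a finite verification, and you never carry it out. The arrays ``like those above'' are not produced, and no configuration of $\mathcal{C}_2,\mathcal{C}_3$ is actually resolved. The residual cases are only anticipated (``we expect these to arise'', ``we would first search''), so as written nothing is proved.

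Two points of the set-up also need repair:
\begin{itemize}
\item The type of a class is relative to the member used as base. $\{abcd,afgh,aegh,aefh,aefg\}$ is Type~I with respect to $abcd$ but Type~II with respect to $afgh$, so it is not ``determined by any of its members''. Your normalisation of $\mathcal{C}_1$ is fine only because you take $A$ to be the member with respect to which it is Type~I.
\item Writing $\mathcal{C}_2$ as in row~1 of Table~\ref{tab:TypeII} needs $P$ with $|P\cap A|=2$ \emph{and} such that $\mathcal{C}_2$ is Type~II with respect to $P$. For example, $\{abef,acdg,acdh,acgh,adgh\}$ is Type~I with respect to $abef$, so there you must take $P=acgh$. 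Such a $P$ exists, but you have to say so.
\end{itemize}

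Once these are fixed, your route is sound and much shorter than the paper's. The paper puts $A$ in the Type~III class, then enumerates five shapes of the Type~I class with further splits on $xyz$ and $x'y'$. Fixing $\mathcal{C}_1$ lets disjointness do most of the bookkeeping you were worried about:
\begin{itemize}
\item In row~1 of Table~\ref{tab:TypeII}, $xyz=cdg$ puts $aefh\in\mathcal{C}_1$ into $\mathcal{C}_2$, and $xyz=cdh$ puts $aefg$ there. So $xyz\in\{cgh,dgh\}$, and the swap $c\leftrightarrow d$ (which fixes $\mathcal{C}_1$ and $abef$) gives $\mathcal{C}_2=\{abef,acgh,abde,abdf,adef\}$.
\item A Type~III class is the set of the five vertices containing some $\{a,p,q\}$. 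It misses $\mathcal{C}_1$ iff one of $p,q$ lies in $\{b,c,d\}$ and the other in $\{e,f,g,h\}$; say $p\in\{b,c,d\}$.
\item It then misses $\mathcal{C}_2$ iff $(p,q)\in\{(c,e),(c,f),(b,g),(b,h),(d,g),(d,h)\}$.
\item The swaps $b\leftrightarrow d$, $e\leftrightarrow f$, $g\leftrightarrow h$ fix $\mathcal{C}_1$ and $\mathcal{C}_2$, which leaves two cases. For $\{a,c,e\}$, the triple $abcd,abef,aceg$ is a hyperedge. For $\{a,b,g\}$, the triple $abcd,acgh,abeg$ is a hyperedge. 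Both follow from Condition~2: pairwise intersections $ab,ac,ae$ and $ac,ab,ag$, with triple intersection $\{a\}$.
\end{itemize}
That is the entire case analysis your plan calls for. Without it written out, the proposal is only an outline.
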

\begin{proof}
Let $\mathcal{C}_1, \mathcal{C}_2,$ and $\mathcal{C}_3$ be three classes of Type I, II, and III, respectively. Let $A=abcd\in \mathcal{C}_3$. Since $\mathcal{C}_1$ and $\mathcal{C}_2$ are maximum strongly independent sets, they contain at least one vertex that intersects $A$ in exactly two elements. Without loss of generality, let $abef\in\mathcal{C}_2$ be one such vertex. The first row of Table~\ref{tab:Type12} represents a Type~III class $\mathcal{C}_3$ containing the vertex $A=abcd$; the second row represents a Type~II class $\mathcal{C}_2$ containing the vertex $abef$, where $w, x, y,$ and $z$ are distinct elements from $\{c, d, g, h\}$. Each of the remaining rows corresponds to a possible form of Type~I class $\mathcal{C}_1$, depending on whether its first vertex intersects $abef$ in one, two, or three elements. As in Lemma~\ref{lemma1}, the first vertex in rows 3-7 intersects $A$ in exactly two elements. \\

\begin{table}[htbp]
    \centering
    \caption{Type I class $\mathcal{C}_1$, Type II class $\mathcal{C}_2$, and Type III class $\mathcal{C}_3$}
    \begin{tabular}{|c|c|c|c|c|c|}
    \hline
    $\mathcal{C}_3$&$abcd$& $ax'y'e$& $ax'y'f$&$ax'y'g$&$ax'y'h$\\
                  &&$x', y'\in\{b,c,d\}$ and $x,\neq y'$&&&\\\hline
    $\mathcal{C}_2$&$abef$&$axyz$&$abew$&$abfw$&$aefw$ \\\hline
    \multirow{5}{*}{$\mathcal{C}_1$}
    &$acgh$&$abde$&$abdf$&$abef$&$adef$\\\hhline{~-----}
    &$aceg$&$abdf$&$abdh$&$abfh$&$adfh$\\\hhline{~-----}
    &$abgh$&$acde$&$acdf$&$adef$&$acef$\\\hhline{~-----}
    &$abeg$&$acdf$&$acdh$&$adfh$&$acfh$\\\hhline{~-----}
    &$acef$&$abdg$&$abdh$& $adgh$&$abgh$\\\hline     
    \end{tabular}
    \label{tab:Type12}
\end{table}

Since row~2 and row~3 in Table~\ref{tab:Type12} share a vertex $abef$, row~3 cannot serve as $\mathcal{C}_1$. If $\mathcal{C}_1$ corresponds to row~4, then $abef$ and $aceg$ form a hyperedge with $A=abcd$. Similarly, if $\mathcal{C}_1$ corresponds to row~5 of Table~\ref{tab:Type12}, then the array below shows that for any choice of $w, x, y,$ and $z$ either $\mathcal{C}_1$ and $\mathcal{C}_2$ are not disjoint or there exists a hyperedge containing one vertex from each class. 

\begin{center}
\begin{tabular}{|c|c|c|}
\hline
$xyz$ & Hyperedge with $A=abcd$& $\mathcal{C}_1 \cap \mathcal{C}_2$ \\\hline
$cdg$  & $abfh, adef$&\\\hline
$cdh$&$abfg, adef$&\\\hline
$cgh$&&$adef$\\\hline
$dgh$&&$acef$\\\hline
\end{tabular}
\end{center}

Next, if $\mathcal{C}_1$ corresponds to row~6 of Table~\ref{tab:Type12}, then $A=abcd$, $abef$, and $adfh$ form a hyperedge. Finally, if $\mathcal{C}_1$ corresponds to row~7 of Table~\ref{tab:Type12}, then the following array shows that for any choice of $w, x, y,$ and $z$ either the classes fail to be disjoint or there exists a hyperedge containing one vertex from each class. 
    
\begin{center}               
\begin{tabular}{|c|c|c|c|}
\hline
$xyz$& $x'y'$  &Hyperedge & $\mathcal{C}_2 \cap \mathcal{C}_3$ or $\mathcal{C}_1 \cap \mathcal{C}_2$ \\\hline
$cdg$ & & $abcd, abeh, acef$&\\\hline
$cdh$&& $abcd, abeg, acef$&\\\hline
$cgh$&$bc$ & $abcd, acgh, abdg$& \\\hline
$cgh$&$bd$ & & $abde$ \\\hline
$cgh$&$cd$ & $acde, abef, abdg$& \\\hline
$dgh$&&&$adgh$\\\hline
\end{tabular}
\end{center}
\end{proof}

The preceding lemmas together yield the following theorem, which is central to establishing our subsequent result on the core of $3\text{-}QI(8,2)$.

\begin{theorem}
\label{main}
Let the vertices of $3$-$QI(8,2)$ be partitioned into classes of size five, each a maximum strongly independent set. Then any three such classes contain three vertices—one from each class—that together form a hyperedge.
\end{theorem}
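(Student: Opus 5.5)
The plan is a pigeonhole argument over the types of the three classes. By Theorem~\ref{types}, every maximum strongly independent set of $3$-$QI(8,2)$ has size $5$ (Corollary~\ref{ind}), and it is of Type~I, II or III relative to any of its vertices. So each class in the partition carries a type. The first step is to check that this type does not depend on which vertex is chosen as the reference vertex $A$. Otherwise the case split below would not be well defined.

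If the type does depend on the reference vertex, I will fix a convention that makes the lemmas applicable. Lemmas~\ref{lemma1}--\ref{lem4} only use that each class has the shape listed in Theorem~\ref{types} relative to a suitably chosen vertex. In fact, a Type~I set $\{A,A_1,\dots,A_4\}$ is also of Type~I with respect to each $A_i$, and similarly for the row and column structures in Table~\ref{tab:ind}. So the type is an intrinsic invariant of the $5$-set: Type~I sets have all pairwise intersections of size one; Type~III sets have all pairwise intersections of size three, since they are $\{1\}\cup$ (the $3$-subsets of a $4$-set containing a common pair); Type~II sets mix both. Verifying this intrinsic characterisation is a short check from Table~\ref{tab:ind}.

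With types well defined, take any three classes $\mathcal{C}_1,\mathcal{C}_2,\mathcal{C}_3$ and consider the multiset of their types from $\{\mathrm{I},\mathrm{II},\mathrm{III}\}$. By pigeonhole, either two classes share a type, or all three types are distinct. If two classes are of Type~I, Lemma~\ref{lemma1} gives a hyperedge meeting each class once. If two are of Type~II, Lemma~\ref{lem2} applies. If two are of Type~III, Lemma~\ref{lem3} applies. If all three types are distinct, Lemma~\ref{lem4} applies. Each lemma allows the third class to be of arbitrary type, and each lemma's hypothesis is exactly that the classes come from a partition of the vertex set into maximum strongly independent sets, so disjointness is available as used there.

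The main obstacle is the bookkeeping for the lemmas' normalisations. Each lemma relabels so that a particular vertex $A=abcd$ lies in a particular class, and so that some vertex $abef$ of a second class meets $A$ in exactly two points. The existence of such a vertex follows from maximality: a vertex not in a hyperedge with $A$ could otherwise be added to the class. I must make sure that when the lemmas are invoked, the class playing the role of $\mathcal{C}_1$ (or $\mathcal{C}_3$ in Lemma~\ref{lem4}) may be chosen freely, and that the ``without loss of generality'' relabelings are realised by automorphisms from $S_8$ (Proposition~\ref{vertextransitive}) that preserve types. Since the types are intrinsic in terms of intersection sizes, they are $S_8$-invariant, and this goes through. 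Combining the four cases proves Theorem~\ref{main}.
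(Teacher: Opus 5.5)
Your plan is the paper's own: classify each class via Theorem~\ref{types} and settle the pigeonhole cases with Lemmas~\ref{lemma1}--\ref{lem4}. You are right that this needs the type of a class to be well defined; the paper's proof makes the same three-way split without comment. However, your justification of well-definedness is false.

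Take the Type~I set $\{abcd,afgh,aegh,aefh,aefg\}$. Its pairwise intersections are not all singletons, since $afgh\cap aegh=\{a,g,h\}$. In fact no three vertices can pairwise meet only in $\{1\}$, because that would need nine points of $\{2,\dots,8\}$. Moreover, relative to $afgh$ this very set is of Type~II: $abcd$ plays the role of $A_i$, and $aegh,aefh,aefg$ play the roles of $B_{1i},B_{2i},B_{3i}$. Conversely, every Type~II set is of Type~I relative to its vertex $A_i$.

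So Types~I and~II are the same kind of $5$-set, a \emph{star}: a centre $X$ together with the four vertices $\{1\}\cup U$, where $U$ is a $3$-subset of $[8]\setminus X$. Both have four pairs meeting in one point and six pairs meeting in three, so no intersection-size invariant can separate them. Only Type~III, the five vertices $T\cup\{x\}$ through a fixed $3$-set $T\ni 1$, is intrinsic. (Your description of it via a $4$-set is also off, though its pairwise intersections are indeed of size three.)

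Consequently, the three-valued label of a class depends on $A$ and on the chosen reference vertex. A bare pigeonhole over such labels does not place you inside the hypothesis of any particular lemma. For example, call a star Type~I when its centre meets $A$ in two points. Then two stars with different labels, with the class of $A$ not of Type~III relative to $A$, are literally covered by none of Lemmas~\ref{lemma1}, \ref{lem2}, \ref{lem4}.

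The repair is to pigeonhole over the two intrinsic kinds, and then choose reference vertices deliberately. Among three classes, two are of the same kind; put $A$ in the third.

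If the two are of Type~III, Lemma~\ref{lem3} applies as stated.

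If they are stars, each contains a \emph{non-centre} vertex meeting $A$ in exactly two points. Since the class is disjoint from $A$ and $\alpha_S=5$, some vertex of it meets $A$ in two points. If this vertex is the centre $X$, write $W=[8]\setminus X$. Then $|A\cap W|=2$, and the leaf $\{1\}\cup(W\setminus\{w\})$ meets $A$ in two points exactly when $w\in A\cap W$. This gives two such leaves; for centre $abef$ they are $acgh$ and $adgh$.

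Hence both stars are of Type~II relative to a vertex meeting $A$ in two points. This is precisely the normalisation of Lemma~\ref{lem2}, and that lemma allows the class of $A$ to be of any type relative to $A$. With this observation, Lemmas~\ref{lem2} and~\ref{lem3} suffice for Theorem~\ref{main}. Your claimed $S_8$-invariance of three types should be replaced by the $S_8$-invariance of the star versus Type~III dichotomy.
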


\begin{proof}
Consider a partition of the vertices of $3$-$QI(8,2)$ into classes of size five, each a maximum strongly independent set. By Theorem~\ref{types}, every such class is of Type~I, Type~II, or Type~III. For any choice of three classes, either all three are of different types or at least two are of the same type. If all three classes are of distinct types, the claim follows from Lemma~\ref{lem4}. If at least two classes are of the same type, the conclusion follows from Lemmas~\ref{lemma1}, \ref{lem2}, and~\ref{lem3}. Hence, in every triple of classes, there exist three vertices—one from each class—that form a hyperedge.
\end{proof}

\begin{theorem}
The hypergraph $3\text{-}QI(8,2)$ is a core.
\end{theorem}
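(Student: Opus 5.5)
We argue by contradiction. Let $H=3\text{-}QI(8,2)$ and suppose $H$ is not a core. Then its core $H^\bullet$ is a proper subhypergraph, and by Lemma~\ref{core} (using vertex-transitivity from Proposition~\ref{vertextransitive}) the order of $H^\bullet$ divides $|V(H)|=35$. So $|V(H^\bullet)|\in\{1,5,7\}$.

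We then rule out small cores using the preserved invariants. By Proposition~\ref{corechrom} and Proposition~\ref{chrom}, $\chi_S(H^\bullet)=7$. A hypergraph on $5$ vertices (or $1$) has strong chromatic number at most its order, so only $|V(H^\bullet)|=7$ survives. Since $\chi_S(H^\bullet)=7$ equals its order, every pair of vertices of $H^\bullet$ lies in a common hyperedge of $H^\bullet$. The core is induced (Lemma~\ref{induced}), so its hyperedges are hyperedges of $H$.

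Next we look at a homomorphism $f:H\to H^\bullet$. Each fibre $f^{-1}(v)$ is strongly independent in $H$. If $u,u'\in f^{-1}(v)$ lay in a common hyperedge $e$, then $|f(e)|<|e|$, contradicting the definition of homomorphism. By the proof of Lemma~\ref{core} all fibres have equal size $35/7=5$, and by Corollary~\ref{ind} each fibre is therefore a maximum strongly independent set. Thus the fibres give exactly a partition of $V(H)$ into seven classes of size five, each a maximum strongly independent set, which is the setting of Theorem~\ref{main}.

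Now we use Theorem~\ref{main} to force $H^\bullet$ to be $3$-complete. For any three distinct vertices $v_1,v_2,v_3$ of $H^\bullet$, their fibres contain $x_i\in f^{-1}(v_i)$ forming a hyperedge of $H$. Its image $\{v_1,v_2,v_3\}$ must lie in a hyperedge of $H^\bullet$ of size $3$, since $H$ is $3$-uniform. Hence every triple of $V(H^\bullet)$ is a hyperedge, so $H^\bullet\cong K_7^3$ and $\omega_3(H^\bullet)=7$. This contradicts Lemma~\ref{coreclique} together with Proposition~\ref{clique}, which give $\omega_3(H^\bullet)=\omega_3(H)=4$. Therefore $H$ is a core.

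The main obstacle is the step guaranteeing that the fibres are of size exactly five and hence maximum strongly independent sets. That step rests on the equal-fibre argument of Lemma~\ref{core}, so I would double-check that it applies to the specific homomorphism $f$ and not merely to some homomorphism. If needed, I would fall back to a direct argument instead. Seven strongly independent sets, each of size at most $5$ by Corollary~\ref{ind}, cover $35$ vertices. Hence each has size exactly $5$, which needs no appeal to Lemma~\ref{core} at all.
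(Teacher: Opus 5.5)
Your proof is correct and follows the paper's argument. Divisibility of $35$ together with $\chi_S=7$ leaves only a $7$-vertex core, and the seven fibres of $f$ are strongly independent sets of size at most $5$ covering $35$ vertices, hence all maximum, so Theorem~\ref{main} contradicts $\omega_3=4$. The paper phrases this last step as choosing a non-hyperedge triple $A,B,C$ of the core, which is just the contrapositive of your ``$H^\bullet\cong K_7^3$'' step; and your fallback counting argument for the fibre sizes is exactly the one the paper uses, so you need not rely on the equal-fibre part of Lemma~\ref{core}.
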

\begin{proof}
Let $H=(V,E)$ denote the core of $3\text{-}QI(8,2)$. By Lemma~\ref{core} together with Proposition~\ref{vertextransitive}, $|V|$ must divide the order of $3\text{-}QI(8,2)$, namely $35$. Furthermore, by Proposition~\ref{corechrom} and Proposition~\ref{chrom}, we have $\chi_S(H)=\chi_S(3\text{-}QI(8,2))=7$, which implies $|V|\ge 7$. Hence $|V|\in\{7,35\}$.\\

If $|V|=7$, then hypergraph $H$ cannot be isomorphic to $K_7^3$, because from Lemma~\ref{coreclique} and Proposition~\ref{clique}, $\omega_3(H)=\omega_3(3\text{-}QI(8,2))=4$. Thus, $H$ contains a triple of vertices, say $A,B,C$, that do not form a hyperedge. Since there exists a homomorphism $f\colon 3\text{-}QI(8,2)\to H$, the preimage of each vertex of $H$ under $f$ is a strongly independent set and these seven preimages form a partition of the vertices of $3$-$QI(8,2)$. By Corollary~\ref{ind}, every strongly independent set of $3\text{-}QI(8,2)$ has maximum cardinality $5$. Thus, each preimage must have cardinality exactly $5$, and therefore each is a maximum strongly independent set. By Theorem~\ref{main}, the three preimage classes corresponding to $A, B, C$ contain a hyperedge of $3\text{-}QI(8,2)$, even though $A,B, C$ do not form a hyperedge in $H$. This contradicts the assumption that $f$ is a homomorphism. Hence, $|V|$ cannot be $7$. Therefore, $|V|=35$, and by Lemma~\ref{induced} the core is an induced subhypergraph. Consequently, $H$ is isomorphic to $3\text{-}QI(8,2)$, and thus $3\text{-}QI(8,2)$ is a core.
\end{proof}

\section{Conclusion}
\label{sec:6}
We examined the structural properties of $3\text{-}QI(8,2)$ and proved that it is a core hypergraph. We also established a structural link between qualitative independence hypergraphs and merged Johnson graphs, providing a framework through which properties of one family may be derived from the other. This approach further yields sufficient conditions under which $3\text{-}UQI(n,2)$ and $3\text{-}AUQI(n,2)$ are cores. Determining the core of $3\text{-}QI(n,2)$ for $n>8$ remains an open problem. A natural direction for future work, suggested by Corollary~\ref{final2}, is to analyze the endomorphism monoid of the merged Johnson graph
$
J\!\left(n, \left\lfloor \tfrac{n-1}{2} \right\rfloor \right)_I$, where $I=\{2,3,\ldots, \left\lfloor \tfrac{n-1}{2} \right\rfloor -2\}.
$
Showing that every endomorphism of this graph is an automorphism would settle the core question for $3\text{-}AUQI(n,2)$ for all odd $n\ge 9$.

\subsection*{Acknowledgements}

The first author is grateful to the Department of Science and Technology (DST) for financial support under the DST-INSPIRE Senior Research Fellow scheme with sanction no. DST/INSPIRE Fellowship/2021/IF210377.

\bibliographystyle{ieeetr}

\bibliography{References}

@book{royle2001algebraic,
  title={Algebraic graph theory},
  author={Royle, Gordon F and Godsil, Chris},
  volume={207},
  year={2001},
  publisher={Springer},
address={New York}
}

@misc{micciancio2004using,
  title={Using hypergraph homomorphisms to guess three secrets},
  author={Micciancio, Daniele and Segerlind, Nathan},
  year={2004},
  publisher={manuscript}
}

@article{appl1994compressing,
  title={Compressing inconsistent data},
  author={Korner, J and Lucertini, Mario},
  journal={IEEE Transactions on Information Theory},
  volume={40},
  number={3},
  pages={706--715},
  year={1994},
  publisher={IEEE},
 doi={10.1109/18.335882}
}

@article{meagher2005covering,
  title={Covering arrays on graphs},
  author={Meagher, Karen and Stevens, Brett},
  journal={Journal of Combinatorial Theory, Series B},
  volume={95},
  number={1},
  pages={134--151},
  year={2005},
  publisher={Elsevier},
doi={https://doi.org/10.1016/j.jctb.2005.03.005}
}

@phdthesis{raaphorst2013thesisvariable,
  title={Variable strength covering arrays},
  author={Raaphorst, Sebastian},
  year={2013},
  school={University of Ottawa (Canada)},
doi={https://doi.org/10.1002/jcd.21602}
}

@article{raaphorst2018variable,
  title={Variable strength covering arrays},
  author={Raaphorst, Sebastian and Moura, Lucia and Stevens, Brett},
  journal={Journal of Combinatorial Designs},
  volume={26},
  number={9},
  pages={417--438},
  year={2018},
  publisher={Wiley Online Library},
doi={https://doi.org/10.1002/jcd.21602}
}

@article{fellner1982minimal,
  title={On minimal graphs},
  author={Fellner, Wolf-Dietrich},
  journal={Theoretical Computer Science},
  volume={17},
  number={1},
  pages={103--110},
  year={1982},
  publisher={Elsevier}
}

@article{hell1992core,
  title={The core of a graph},
  author={Hell, Pavol and Ne{\v{s}}et{\v{r}}il, Jaroslav},
  journal={Discrete Mathematics},
  volume={109},
  number={1-3},
  pages={117--126},
  year={1992},
  publisher={Elsevier}
}

@article{nevsetvril1978classescore,
  title={On classes of relations and graphs determined by subobjects and factorobjects},
  author={Ne{\v{s}}et{\v{r}}il, Jaroslav and Pultr, Ale{\v{s}}},
  journal={Discrete Mathematics},
  volume={22},
  number={3},
  pages={287--300},
  year={1978},
  publisher={Elsevier}
}

@article{bauslaugh1995core,
  title={Core-like properties of infinite graphs and structures},
  author={Bauslaugh, Benjamin},
  journal={Discrete mathematics},
  volume={138},
  number={1-3},
  pages={101--111},
  year={1995},
  publisher={Elsevier}
}

@inproceedings{applstevens1998efficient,
  title={Efficient software testing protocols},
  author={Stevens, Brett and Mendelsohn, Eric},
  booktitle={Proceedings of the 1998 conference of the Centre for Advanced Studies on Collaborative research},
  pages={22},
  year={1998}
}

@article{jones2005automorphisms,
  title={Automorphisms and regular embeddings of merged Johnson graphs},
  author={Jones, Gareth A},
  journal={European Journal of Combinatorics},
  volume={26},
  number={3-4},
  pages={417--435},
  year={2005},
  publisher={Elsevier}
}

@article{survey2019methods,
  title={Methods to construct uniform covering arrays},
  author={Torres-Jimenez, Jose and Izquierdo-Marquez, Idelfonso and Avila-George, Himer},
  journal={IEEE Access},
  volume={7},
  pages={42774--42797},
  year={2019},
  publisher={IEEE},
doi={10.1109/ACCESS.2019.2907057}
}

@article{godsil2011cores,
  title={Cores of geometric graphs},
  author={Godsil, Chris and Royle, Gordon F},
  journal={Annals of Combinatorics},
  volume={15},
  pages={267--276},
  year={2011},
  publisher={Springer}
}

@article{godsilnewman33,
author = {Godsil, C. D. and Newman, M. W.},
title = {Independent Sets In Association Schemes},
year = {2006},
issue_date = {August 2006},
publisher = {Springer-Verlag},
address = {Berlin, Heidelberg},
volume = {26},
number = {4},
issn = {0209-9683},
url = {https://doi.org/10.1007/s00493-006-0024-z},
doi = {10.1007/s00493-006-0024-z},
journal = {Combinatorica},
month = aug,
pages = {431–443},
numpages = {13}
}

@inproceedings{alanwilliams1996practical,
  title={A practical strategy for testing pair-wise coverage of network interfaces},
  author={Williams, Alan W and Probert, Robert L},
  booktitle={Proceedings of ISSRE'96: 7th International Symposium on Software Reliability Engineering},
  pages={246--254},
  year={1996},
  organization={IEEE},
    doi = {10.1109/ISSRE.1996.558835}
}

@article {raina,
author = {Thomas, Raina Mary and Akhtar, Yasmeen},
title = {On the Covering Array Number of $3$-uniform Qualitative Independence Hypergraphs},
journal = {Communications in Combinatorics and Optimization},
volume = {},
number = {},
pages = {},
year  = {2026},
publisher = {Azarbaijan Shahid Madani University},
issn = {2538-2128}, 
eissn = {2538-2136}, 
doi = {10.22049/cco.2026.30424.2472},	
url = {https://comb-opt.azaruniv.ac.ir/article_15123.html},

}

@book{berge1984hypergraphs,
  title={Hypergraphs: combinatorics of finite sets},
  author={Berge, Claude},
  volume={45},
  year={1984},
  publisher={Elsevier}
}

@inproceedings{fagin2003data,
  title={Data exchange: getting to the core},
  author={Fagin, Ronald and Kolaitis, Phokion G and Popa, Lucian},
  booktitle={Proceedings of the twenty-second ACM SIGMOD-SIGACT-SIGART symposium on Principles of database systems},
  pages={90--101},
  year={2003}
}

\end{document}